\documentclass[11pt, a4paper]{amsart}
\usepackage{amssymb, amsmath, amscd, bm, amsthm, pdfpages, setspace, hyperref, mathtools, subcaption, graphicx, enumerate, cite}

\setlength\topmargin{5pt}  
\setlength\oddsidemargin{-15pt}\setlength\evensidemargin{-15pt}
\setlength{\textwidth}{170mm}\setlength{\textheight}{230mm}
\headheight=10pt

\theoremstyle{plain}
\newtheorem{theorem}{Theorem}[section]
\newtheorem{proposition}[theorem]{Proposition}
\newtheorem{definition}[theorem]{Definition}

\newtheorem{lemma}[theorem]{Lemma}

\newtheorem{corollary}[theorem]{Corollary}

\newcommand{\re}{\mathop{\rm Re}\nolimits}
\newcommand{\im}{\mathop{\rm Im}\nolimits}

\newcommand{\ran}{\mathop{\rm ran}\nolimits}

\def\@Rref#1{\hbox{\rm \ref{#1}}}
\def\Rref#1{\@Rref{#1}}

\theoremstyle{plain}

\begin{document}

\title[Decay of Operator Semigroups]{Decay of Operator Semigroups, Infinite-time Admissibility, and Related 
Resolvent Estimates}

\thispagestyle{plain}

\author{Masashi Wakaiki}
\address{Graduate School of System Informatics, Kobe University, Nada, Kobe, Hyogo 657-8501, Japan}
 \email{wakaiki@ruby.kobe-u.ac.jp}
 \thanks{This work was supported by JSPS KAKENHI Grant Number JP20K14362.}

\begin{abstract}
We study decay rates for bounded $C_0$-semigroups
from the perspective of $L^p$-infinite-time admissibility and related 
resolvent estimates. In the Hilbert space setting,
polynomial decay of semigroup orbits is characterized by
the resolvent behavior in the open right half-plane.
A similar characterization based on $L^p$-infinite-time admissibility 
is provided for 
multiplication semigroups on $L^q$-spaces with $1 \leq q \leq p < \infty$.
For
polynomially stable $C_0$-semigroups on Hilbert spaces,
we also give
a sufficient condition for $L^2$-infinite-time admissibility.
\end{abstract}

\keywords{$C_0$-semigroup,
	Infinite-dimensional system, Infinite-time admissibility, Polynomial stability} 

\maketitle

\section{Introduction}
Consider the abstract Cauchy problem
\[
\begin{cases}
\dot u (t) = Au(t), & t \geq 0, \\
u(0) = x,& x \in X,
\end{cases}
\]
where $A$ is the generator of a bounded $C_0$-semigroup $(T(t))_{t\geq 0}$
on a Banach space $X$. Throughout this paper,
we consider the domain $D(A)$ of $A$ to be 
equipped with 
the graph norm of $A$.
Let us assume that $A$ is invertible.
Then the domain of $A$ coincides with the range of $A^{-1}$.
To obtain uniform decay rates of classical solutions, 
we study
the quantitative behavior of the operator norm $\|T(t)A^{-1}\|$
as $t \to \infty$.
We concentrate mainly on the situation where $\|T(t)A^{-1}\|$
decays polynomially, i.e., $\|T(t)A^{-1}\| = O(t^{-1/\alpha})$ as 
$t \to \infty$ for some $\alpha >0$.

The relation between polynomial rates of decay of $\|T(t)A^{-1}\|$ and 
growth of $\|(i\eta I-A)^{-1}\|$ 
on the imaginary axis $i \mathbb{R}$ was studied, e.g., in \cite{Liu2005PDR, Batkai2006, Batty2008, Borichev2010}.
In particular, it was shown in \cite{Borichev2010} that 
$\|T(t)A^{-1}\| = O(t^{-1/\alpha})$ as
$t \to \infty$ is equivalent to 
$\|(i\eta I - A)^{-1}\| = O(|\eta|^{\alpha})$
as $|\eta| \to \infty$ in the Hilbert space setting.
We refer to \cite{Batty2008, Batty2016, Rozendaal2019, Chill2020}  for 
more general transference between semigroup decay and 
resolvent growth on $i\mathbb{R}$.

The long-time asymptotic behavior of semigroup orbits is 
related to the resolvent of $A$ on the 
open right half-plane $\mathbb{C}_+$ as well. For example, 
the Gearhart-Pr\"uss theorem (see, e.g., \cite[Theorem~5.2.1]{Arendt2001} and 
\cite[Theorem~V.1.11]{Engel2000})
shows that
exponential stability of a $C_0$-semigroup on a Hilbert space
is equivalent to uniform boundedness of the resolvent on $\mathbb{C}_+$.
Moreover, it was proved in \cite{Tomilov2001} that density of the set
\[
\left\{
x \in X :
\lim_{\xi \to 0+} 
\sqrt{\xi} \big((\xi+i\eta)I - A \big)^{-1}x = 0 \text{~for all $\eta \in \mathbb{R}$}
\right\}
\]
is sufficient for strong stability of the bounded $C_0$-semigroup when $X$ is a Hilbert space.
See the survey papers \cite{Chill2007,Chill2020}
for further references and recent developments on stability of $C_0$-semigroups.
In
\cite{Zwart2003_Ulmer,Eisner2006, Eisner2007, Rozendall2018, Helffer2021,Arnold2022,Arnold2023},  the generator $A$ 
of a (not necessarily bounded) $C_0$-semigroup $(T(t))_{t \geq 0}$
was considered, and
upper bounds for non-exponential
growth rates of $\|T(t)\|$ were obtained from
the so-called $\alpha$-Kreiss condition: The spectrum $\sigma(A)$ of $A$
is contained in the closed left half-plane $\overline{\mathbb{C}}_-$ and
there exist $K \geq 1$ and $0\leq \alpha < \infty$ such that
$\|(\lambda I - A)^{-1}\| \leq K((\re \lambda)^{-\alpha} + 1)$
for all $\lambda \in \mathbb{C}_+$. Some
variations of these results were reported in \cite{Neerven2009, Boukdir2015}.
The estimate $\|T(t)\| = O(t/\sqrt{\log t})$, proved in \cite{Arnold2022}, 
is the best obtained so far from the $1$-Kreiss condition for 
Hilbert space semigroups.

For
a bounded $C_0$-semigroup $(T(t))_{t\geq 0}$ on a Hilbert space such that 
$\sigma(A) \cap i \mathbb{R} = \emptyset$,
$\|T(t)A^{-1}\| = O(t^{-1/\alpha})$ 
if and only if $\sup_{\lambda \in \mathbb{C}_+}
\|(\lambda I - A)^{-1}(-A)^{-\alpha}\| < \infty$; see \cite{Borichev2010}.
In this equivalence, the fractional power
$(-A)^{-\alpha}$ plays a role of a smoothing factor that cancels
resolvent growth.
When we replace it with a weaker
smoothing factor 
$(-A)^{-\alpha+ \varepsilon}$, $0< \varepsilon<\alpha$, the resulting norm
$\|(\lambda I - A)^{-1}(-A)^{-\alpha+\varepsilon}\|$ may  
grow to infinity as $\re \lambda \to 0 +$.
Then it is natural to ask whether this kind of resolvent growth 
on $\mathbb{C}_+$
also characterizes
the decay of $\|T(t)A^{-1}\|$ as $t\to \infty$.

We study
a resolvent estimate on $\mathbb{C}_+$,
sometimes called the $p$-Weiss condition in the community
of infinite-dimensional systems. 
Let $Y$ be another Banach space, and consider $C \in \mathcal{L}(D(A),Y)$,
i.e., a linear bounded operator $C$ from $D(A)$ to $Y$.
The $p$-Weiss condition on $C$ is defined
as follows.
\begin{definition}
	\label{def:Weiss_cond}
	{\em
		Let $X$ and $Y$ be Banach spaces, 
		let $A$ be the generator of a bounded $C_0$-semigroup 
		$(T(t))_{t \geq 0}$ on $X$, and let $1 \leq p \leq \infty$.
		An operator $C \in \mathcal{L}(D(A),Y)$ {\em satisfies the $p$-Weiss condition for $A$} 
		if there exists $K>0$ such that for all $\lambda \in \mathbb{C}_+ $,
		\[
		\|CR(\lambda,A)\| \leq \frac{K}{(\re \lambda)^{1-1/p}},
		\]
		where $1/p \coloneqq 0$ for $p = \infty$.
	}
\end{definition}
Following \cite{Weiss1989_observation}, we introduce the notion of admissibility.
The $p$-Weiss condition was derived as a necessary condition for
this notion by Weiss \cite{Weiss1991}.
\begin{definition}
	{\em
		Let $X$ and $Y$ be Banach spaces, 
		let $A$ be the generator of a $C_0$-semigroup 
		$(T(t))_{t \geq 0}$ on $X$, and let $1 \leq p < \infty$.
		\begin{itemize}
			\item An operator $C \in \mathcal{L}(D(A),Y)$ is {\em $L^p$-infinite-time admissible for $A$}
			if there exists $M >0$ such that 
			\begin{equation}
			\label{eq:inf_time_admissible}
			\displaystyle \int^{\infty}_0 \|CT(t)x\|^p dt \leq M \|x\|^p
			\end{equation}
			for all $x \in D(A)$.
			\item 
			An operator $C \in \mathcal{L}(D(A),Y)$ is {\em $L^{\infty}$-infinite-time admissible for $A$
				if there exists $M >0$ such that 
				$\sup_{t \geq 0} \|CT(t)x\| \leq M \|x\|$ for all $x \in D(A)$.}
		\end{itemize}
	}
\end{definition}
Weiss conjectured in \cite{Weiss1991, Weiss1998} that 
the $2$-Weiss condition and $L^2$-infinite-time admissibility are equivalent
when $X$ and $Y$ are Hilbert spaces. 
This conjecture was resolved negatively.
Counterexamples can be found in \cite{Jacob2002,
	Zwart2003, Jacob2004}.
However, positive results on the equivalence were obtained in
several situations:  a) $(T(t))_{t \geq 0}$ is an exponentially stable and 
right-invertible $C_0$-semigroup \cite{Weiss1991}; b) $(T(t))_{t \geq 0}$ is a contraction $C_0$-semigroup
and $Y$ is a finite-dimensional space \cite{Jacob2001}; and  c) 
$(T(t))_{t \geq 0}$ is a bounded analytic $C_0$-semigroup 
such that $(-A)^{1/2}$ is $L^2$-infinite-time admissible for $A$ \cite{LeMerdy2003}.
While $X$ and $Y$ are Hilbert  spaces  in the results a) and b), they are Banach spaces
in the result c).
Moreover, the result c) was extended from the $L^2$-case to the $L^p$-case for
$1 \leq p \leq \infty$
in
\cite{Haak2007, Bounit2010} and to Orlitz spaces in \cite{Hosfeld2023}.
For more information on admissibility and related resolvent conditions,
we refer to the survey article \cite{Jacob2004Survey} and the books
\cite{Staffans2005, Tucsnak2009}.

For $C \in \mathcal{L}(D(A),Y)$,
the relation of
the $p$-Weiss condition and $L^p$-infinite-time admissibility to
the decay rate of the form
\begin{equation}
\label{eq:CT_decay_square}
\|CT(t)\| \leq \frac{M}{t^{1/p}}\qquad \text{as $t \to \infty$ for some
	$M >0$}
\end{equation}
was studied in \cite{Bounit2010,Haak2012, Zwart2012, Hosfeld2023}, where $CT(t)$ extends an operator in $\mathcal{L}(X,Y)$
(again denoted by $CT(t)$)  for all $t >0$.
For bounded analytic semigroups on Banach spaces,
the decay estimate in \eqref{eq:CT_decay_square} with $1 < p \leq \infty$
holds if and only if
the $p$-Weiss condition is satisfied
\cite{Bounit2010,Haak2012}.
This
was extended to decay rates of a more general class of functions
in \cite{Hosfeld2023}. 
It was shown  in \cite{Zwart2012} that 
if a bounded $C_0$-semigroup $(T(t))_{t \geq 0}$ 
on a Hilbert space commutes with $C$ for all
$t \geq 0$, then
$L^2$-infinite-time admissibility implies
the decay rate in \eqref{eq:CT_decay_square} with $p=2$.

While most previous studies on admissibility focus on
$C \in \mathcal{L}(D(A),Y)$, we sometimes 
deal with $C \in \mathcal{L}(X,Y)$ in this paper.
This might seem unconventional, because $C \in \mathcal{L}(X,Y)$
is infinite-time admissible for any generator of
an exponentially stable $C_0$-semigroup.
However, 
it is not trivial that when the $C_0$-semigroup have weaker stability properties, 
$C \in \mathcal{L}(X,Y)$ satisfies $L^p$-infinite-time admissibility and the $p$-Weiss condition with $1 \leq p < \infty$.
In particular, we are interested in the case where $C$ is a fractional power  $(-A)^{-\delta}$ 
for some $\delta >0$, which is not only bounded on $X$
but also has a certain smoothing effect.
This is in contrast to the situation of 
the above-mentioned previous studies \cite{LeMerdy2003, Haak2007, Bounit2010,Hosfeld2023}, which discussed
the infinite-time admissibility
of $(-A)^{\delta}$ with $\delta >0$
as
a condition for the Weiss conjecture to be true for bounded analytic $C_0$-semigroups.

For fixed $\alpha >0$ and $1 \leq p < \infty$,
we prove that $(-A)^{-\alpha/p}$ satisfies the $p$-Weiss condition for $A$
if and only if $\|T(t)A^{-1}\| = O(t^{-1/\alpha})$ as $t \to \infty$,
where $(T(t))_{t \geq 0}$ is a bounded $C_0$-semigroup on a Hilbert space
such that $0 \notin \sigma(A)$.
Note that the $p$-Weiss condition on $(-A)^{-\alpha/p}$ 
gives an upper bound for the rate of growth of
$\|(\lambda I- A)^{-1}(-A)^{-\alpha/p}\|$ as $\re \lambda \to 0+$.
Since $(-A)^{-\alpha/p}$ is bounded, the Hille-Yosida theorem guarantees
a faster decay rate 
$\|(\lambda I- A)^{-1}(-A)^{-\alpha/p}\| = O(1/\re\lambda)$ as $\re \lambda 
\to \infty$ than the $p$-Weiss condition.
We also show that
non-polynomial decay of the form  $\|T(t)(-A)^{-\alpha}\| 
= O(t^{-\beta} (\log t)^{-\gamma})$, 
where $\alpha >0$, $0\leq \beta < 1$,
and $\gamma \geq 0$,
can be  characterized by resolvent growth on $\mathbb{C}_+$.
Following \cite{Martinez2011,
	Seifert2015KTthm, Chill2016, Batty2016, Rozendaal2019}, 
we obtain analogous characterizations of 
rates of decay for $\|T(t)A(I-A)^{-1}\|$ and  $\|T(t)A(I-A)^{-2}\|$.
They are motivated by the problem of quantifying
uniform rates of decay of orbits  starting in
the range $\ran (A)$ of $A$ and in $D(A)\cap \ran (A)$, respectively.

In the Banach space setting, we can easily see that for 
a bounded $C_0$-semigroup $(T(t))_{t \geq 0}$,
the decay property $\|T(t)A^{-1}\| = O(t^{-1/\alpha})$ is almost
equivalent to the $L^p$-infinite-time admissibility of $(-A)^{-\alpha/p}$.
More precisely, if $(-A)^{-\alpha/p}$ is 
$L^p$-infinite-time admissible for $A$, then
$\|T(t)A^{-1}\| = O(t^{-1/\alpha})$ as $t \to \infty$. Conversely,
if this decay estimate is satisfied, then
$(-A)^{-\alpha/p-\varepsilon}$ with $\varepsilon>0$ 
is $L^p$-infinite-time admissible for $A$.
We do not know whether the latter implication with 
$\varepsilon = 0$ is true for
general bounded $C_0$-semigroups. However, 
for multiplication semigroups on $L^q$-spaces with $1 \leq q \leq p < \infty$,
it is true and hence we obtain an equivalence result.

Let $X$ and $Y$ be Hilbert spaces, and 
consider a bounded $C_0$-semigroup $(T(t))_{t \geq 0}$
on $X$ such that $\|T(t)A^{-1}\|$ decays polynomially.
The next objective is to obtain a sufficient condition for
$C \in \mathcal{L}(D(A),Y)$ to be $L^2$-infinite-time admissible for $A$.
One of the motivations is that the dual notion of $L^2$-infinite-time
admissibility in Definition~\ref{def:Weiss_cond} is 
closely related to input-to-state stability with respect to
squared integrable inputs; see \cite{Weiss1989_observation} for duality results on
admissibility. 
A survey on input-to-state stability of infinite-dimensional systems is given 
in \cite{Mironchenko2020}.
A crucial part of the sufficient condition is that 
there exists $\alpha >0$ such that 
$C(-A)^{\alpha}$ extends to an operator in $\mathcal{L}(D(A),Y)$
and its extension satisfies the $2$-Weiss condition for $A$, which is called
the {\em strong $2$-Weiss condition for $A$} in this paper.
We show that $C$ is $L^2$-infinite-time admissible if
$C$ satisfies the strong $2$-Weiss condition and is $L^2$-{\em finite}-time
admissible, in whose definition the integral over $[0,\infty)$
in \eqref{eq:inf_time_admissible} is replaced by an integral over 
some finite interval $[0,t_1)$.
We also examine the relation between the strong $2$-Weiss condition and 
the decay rate of the form 
$\|CT(t)\| = O(1/\sqrt{t^{1+\beta}})$ for some $\beta >0$ when $Y = X$
and $T(t)$ commutes with $C$ for all $t \geq 0$.

The paper is organized as follows.
In Section~\ref{sec:Preliminaries}, we recall some basics on the
polynomial decay of semigroup orbits and Plancherel's theorem.
In Section~\ref{sec:polynomial}, polynomial  rates of orbit decay
are characterized by the Weiss condition in the Hilbert space setting and
by infinite-time admissibility in the Banach space setting.
In Section~\ref{sec:non_polynomial}, we establish 
an analogous resolvent characterization
of the property $\|T(t)(-A)^{-\alpha}\| 
= O(t^{-\beta} (\log t)^{-\gamma})$, 
where $\alpha >0$, $0\leq \beta < 1$,
and $\gamma \geq 0$.
In Section~\ref{sec:suff_cond}, we obtain a sufficient condition for
$L^2$-infinite-time admissibility by using the decay estimate of $\|T(t)(-A)^{-\alpha}\|$.

\subsection*{Notation}
Let $\mathbb{C}_+$, $\mathbb{C}_-$, and $i\mathbb{R}$ denote the open right half-plane
$\{\lambda \in \mathbb{C} :\re \lambda >0 \}$,
the open left half-plane
$\{\lambda \in \mathbb{C} : \re \lambda  <0 \}$,
and the imaginary axis
$\{i \eta : \eta \in \mathbb{R}\}$, respectively.
Let $X$ and $Y$ be Banach spaces. 
The space of
all bounded linear operators from $X$ to $Y$ is denoted by
$\mathcal{L}(X,Y)$. We write $\mathcal{L}(X) \coloneqq \mathcal{L}(X,X)$.
The domain and the range of a linear operator $A\colon X \to Y$ are denoted by $D(A)$ and
$\ran(A)$, respectively.
When $A$ is closed, 
$D(A)$ is seen as a Banach space with the graph norm $\|x\|_A \coloneqq \|x\| + \|Ax\|$.
We denote by $\sigma(A)$ and $\varrho(A)$
the spectrum and 
the resolvent set of a linear operator $A\colon D(A) \subset X \to X$, respectively.
We write
$R(\lambda,A) \coloneqq (\lambda I - A)^{-1}$ for $\lambda \in \varrho(A)$.
For linear operators $A\colon D(A) \subset X \to X $ and $B\colon D(B) \subset X \to X$, we write
$A \subset B$ if $D(A) \subset D(B)$ and $Ax = Bx$ for all $x \in D(A)$,
and the composite $BA$ is defined by $BAx \coloneqq B(Ax)$ with domain
$D(BA) \coloneqq \{
x \in D(A) : Ax \in D(B)
\}$.
Let
$\langle \cdot , \cdot \rangle$ denote
the inner product in a Hilbert space.

\section{Preliminaries}
\label{sec:Preliminaries}
\subsection{Fractional powers and polynomial decay}
Let $X$ be a Banach space.
For a linear operator $A\colon D(A) \subset X \to X$
and $T \in \mathcal{L}(X)$,
we say that 
$T$ {\em commutes with} $A$ if 
$
TA  \subset AT
$.
We say that a densely defined, linear operator $A$ on $X$
is {\em sectorial} if $(-\infty,0) \subset \varrho(A)$ and if there exists $K >0$ such that 
$\|\lambda(\lambda+A)^{-1}\| \leq K$ for all $\lambda >0$.
We refer to \cite[Section~2.1]{Haase2006} for background information 
on sectorial operators.

Let $(T(t))_{t \geq0}$ be a bounded $C_0$-semigroup 
on a Banach space $X$ with generator $A$. The negative generator 
$-A$ is sectorial, and the fractional power $(-A)^{\alpha}$ is well defined for each 
$\alpha >0$.
If $0 \in \varrho(A)$, then $-A^{-1}$ is also sectorial and  for all $\alpha \geq 0$, $(-A)^{-\alpha}$ is well defined and satisfies $(-A)^{-\alpha} = 
(-A^{-1})^{\alpha} = ((-A)^{\alpha})^{-1}$.
Since 
$R(\lambda,A)$ and $T(t)$ commute with $A$
for all $\lambda \in \varrho(A)$ and $t \geq 0$,
they also commute with $(-A)^\alpha$ for all $\alpha \in \mathbb{R}$; 
see, e.g., Propositions~3.1.1.f) and 3.2.1.a) of \cite{Haase2006}.
We shall use this commutative property frequently without comment.
Most properties of fractional powers of sectorial operators needed in this paper
can be found in 
\cite[Chapter~3]{Haase2006}.

For a fixed $\alpha>0$, 
the decay rate of $\|T(t)(-A)^{-\alpha}\|$ 
is linked to that of $\|T(t)A^{-1}\|$ in the Banach space setting as shown in
\cite[Proposition~3.1]{Batkai2006}.
We here present it in a general form. The proof can be found in
\cite[Lemma~4.2]{Batty2016}.
\begin{proposition}
	\label{prop:interpolation}
	Let $(T(t))_{t\geq 0}$ be a bounded $C_0$-semigroup  on
	a Banach space $X$, and let $B \in \mathcal{L}(X)$
	be a sectorial operator such that $T(t)$ commutes with $B$
	for all $t \geq 0$. 
	For fixed $\alpha ,\beta >0$,  the following statements are equivalent:
	\begin{enumerate}[\em (\roman{enumi})]
		\item 
		$\|T(t)B\| = O(t^{-1/\alpha})$ as $t \to \infty$.
		\item 
		$\|T(t)B^\beta\| = O(t^{-\beta/\alpha})$ as $t \to \infty$.
	\end{enumerate}
\end{proposition}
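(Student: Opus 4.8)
The plan is to reduce everything to the single implication $(\ast)$: for any bounded sectorial operator $B' \in \mathcal{L}(X)$ that commutes with $T(t)$ for all $t\ge 0$ and any $\alpha',\gamma'>0$, the estimate $\|T(t)B'\| = O(t^{-1/\alpha'})$ implies $\|T(t)(B')^{\gamma'}\| = O(t^{-\gamma'/\alpha'})$ as $t\to\infty$. Once $(\ast)$ is available, $(i)\Rightarrow(ii)$ is just $(\ast)$ with $B'=B$, $\alpha'=\alpha$, $\gamma'=\beta$; and $(ii)\Rightarrow(i)$ follows by applying $(\ast)$ with $B'=B^{\beta}$ (again a bounded sectorial operator commuting with $T(t)$, by the standard calculus of fractional powers of sectorial operators, see \cite[Chapter~3]{Haase2006}), $\alpha'=\alpha/\beta$ and $\gamma'=1/\beta$, together with the law of exponents $(B^{\beta})^{1/\beta}=B$. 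So it remains to prove $(\ast)$.

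For $(\ast)$ I would first treat integer powers. Since $T(t/n)$ and $B'$ commute pairwise, $T(t)(B')^{n}=\big(T(t/n)B'\big)^{n}$ for every $n\in\mathbb{N}$, so
\[
\|T(t)(B')^{n}\|\le\|T(t/n)B'\|^{n}=O\big((t/n)^{-n/\alpha'}\big)=O(t^{-n/\alpha'})\qquad(t\to\infty),
\]
the constant $n^{n/\alpha'}$ being absorbed into the $O(\cdot)$. Next I would interpolate. Fix $\gamma'>0$, put $k=\lfloor\gamma'\rfloor$ and $\theta=\gamma'-k\in[0,1)$, and recall the moment inequality for the sectorial operator $B'$ (\cite[Chapter~3]{Haase2006}): there is $C>0$ with $\|(B')^{\theta}z\|\le C\|z\|^{1-\theta}\|B'z\|^{\theta}$ for all $z\in X$. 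Applying this with $z=(B')^{k}T(t)x$, and using that $T(t)$ commutes with every fractional power of $B'$, gives
\[
\|T(t)(B')^{\gamma'}x\|=\|(B')^{\theta}(B')^{k}T(t)x\|\le C\|T(t)(B')^{k}x\|^{1-\theta}\|T(t)(B')^{k+1}x\|^{\theta},
\]
hence $\|T(t)(B')^{\gamma'}\|\le C\|T(t)(B')^{k}\|^{1-\theta}\|T(t)(B')^{k+1}\|^{\theta}$. By the integer case this is $O\big(t^{-[(1-\theta)k+\theta(k+1)]/\alpha'}\big)=O(t^{-\gamma'/\alpha'})$, which is $(\ast)$. (When $\gamma'$ is a positive integer there is nothing to interpolate; when $0<\gamma'<1$ one takes $k=0$, reading $(B')^{0}=I$ and $\|T(t)(B')^{0}\|=\|T(t)\|=O(1)$ from boundedness of the semigroup.)

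The one point that is not mere bookkeeping, once the moment inequality is taken as known, is the integer step: moment interpolation on its own only interpolates between $T(t)B'$ and $T(t)$ and so can never beat the rate $t^{-1/\alpha'}$, whereas the identity $T(t)(B')^{n}=(T(t/n)B')^{n}$ is precisely what produces the faster decay needed when $\beta>1$. Beyond that I would only need to be a little careful that the moment-inequality constant does not depend on $t$ (it does not, since $B'$ and its fractional powers are fixed bounded operators) and that all asymptotics are claimed for $t\to\infty$ only, the behaviour on bounded time intervals being controlled by $\sup_{t\ge 0}\|T(t)\|<\infty$.
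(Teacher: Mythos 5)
The paper does not prove this proposition itself but cites \cite[Lemma~4.2]{Batty2016}; your overall strategy (kill integer powers with the semigroup splitting $T(t)(B')^{n}=(T(t/n)B')^{n}$, then interpolate the fractional part with the moment inequality) is essentially the standard one used there, and your implication $(\ast)$, hence (i)~$\Rightarrow$~(ii), is correct.

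There is, however, a genuine gap in your reduction of (ii)~$\Rightarrow$~(i). You apply $(\ast)$ to $B'=B^{\beta}$ and assert that $B^{\beta}$ is ``again a bounded sectorial operator'' and that $(B^{\beta})^{1/\beta}=B$, citing the standard calculus. But the standard theorems (sectoriality of $A^{\alpha}$ with angle $\alpha\omega$, and the composition rule $(A^{\alpha})^{\gamma}=A^{\alpha\gamma}$) require $\alpha\omega<\pi$, where $\omega$ is the sectoriality angle of $A$. The hypothesis here only gives $\sup_{\lambda>0}\|\lambda(\lambda+B)^{-1}\|<\infty$, which forces some angle $\omega<\pi$ but gives no control on it, so for $\beta\geq \pi/\omega$ the operator $B^{\beta}$ may fail to be sectorial: e.g.\ if $B$ is the diagonal operator with entries $n^{-1}e^{i\psi}$ on $\ell^{2}$ for some $\psi\in(\pi/2,\pi)$ (sectorial of angle $\psi$, commuting with the diagonal semigroup $e^{-t/n}$), then for $\beta=\pi/\psi\in(1,2)$ the spectrum of $B^{\beta}$ lies in $(-\infty,0)$ and accumulates at a negative number, so $(-\infty,0)\not\subset\varrho(B^{\beta})$. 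At that point neither the fractional powers of $B^{\beta}$ nor the moment inequality for $B^{\beta}$ are available, so your invocation of $(\ast)$ with $B'=B^{\beta}$ breaks down. The repair is short and stays entirely with powers of $B$ itself: choose $n\in\mathbb{N}$ with $n\beta\geq 1$; then
\[
\|T(t)B^{n\beta}\|=\|T(t)(B^{\beta})^{n}\|\leq\|T(t/n)B^{\beta}\|^{n}=O(t^{-n\beta/\alpha}),
\]
and the three-exponent moment inequality for the sectorial operator $B$ with exponents $0\leq 1\leq n\beta$ gives
\[
\|BT(t)x\|\leq C\,\|T(t)x\|^{1-1/(n\beta)}\,\|B^{n\beta}T(t)x\|^{1/(n\beta)}=O(t^{-1/\alpha})\|x\|,
\]
which is (i). (The same device, i.e.\ always interpolating between powers of $B$ rather than forming fractional powers of $B^{\beta}$, also removes the need for the law of exponents $(B^{\beta})^{1/\beta}=B$.) With this modification the argument is complete.
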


Polynomial rates of decay of bounded $C_0$-semigroups on Hilbert spaces
can be characterized by uniform boundedness of the resolvent
on $\mathbb{C}_+$; see Lemma~2.3 and Theorem~2.4 of \cite{Borichev2010} 
for the proof.
\begin{theorem}
	\label{thm:uniform_bounded}
	Let
	$(T(t))_{t \geq 0}$ be a bounded $C_0$-semigroup on
	a Hilbert space $X$ with generator $A$ such that $i \mathbb{R} \subset \varrho(A)$.
	For a fixed $\alpha >0$, the following statements are equivalent:
	\begin{enumerate}[\em (\roman{enumi})]
		\item 
		$\|T(t)A^{-1}\| = O(t^{-1/\alpha})$ as $t \to \infty$.
		\item 
		There exists $M>0$ such that $\|R(\lambda,A) (-A)^{-\alpha}\| \leq M$
		for all $\lambda \in \mathbb{C}_+$.	
	\end{enumerate}
\end{theorem}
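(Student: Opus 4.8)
The plan is to derive the equivalence from two results, one of which is already recalled in the introduction. Since $X$ is a Hilbert space, $(T(t))_{t\ge 0}$ is bounded, and $i\mathbb{R}\subset\varrho(A)$, the Borichev--Tomilov transference theorem tells us that statement~(i) is equivalent to the imaginary-axis resolvent estimate
\[
\|R(i\eta,A)\| = O(|\eta|^{\alpha}) \qquad \text{as } |\eta|\to\infty .
\]
It then remains to show that this estimate is equivalent to~(ii); this second step is elementary and, in contrast to the first, does not use the Hilbert space structure.

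For the second step I would first reduce the half-plane bound in~(ii) to a bound on the imaginary axis. Boundedness of the semigroup yields the Hille--Yosida estimate $\|\xi R(\xi+i\eta,A)\|\le M_0$ for all $\xi>0$ and $\eta\in\mathbb{R}$, and the resolvent identity gives
\[
R(\xi+i\eta,A)(-A)^{-\alpha} = R(i\eta,A)(-A)^{-\alpha} - \xi\,R(\xi+i\eta,A)\,R(i\eta,A)(-A)^{-\alpha}.
\]
Hence $\sup_{\lambda\in\mathbb{C}_+}\|R(\lambda,A)(-A)^{-\alpha}\|\le (1+M_0)\sup_{\eta\in\mathbb{R}}\|R(i\eta,A)(-A)^{-\alpha}\|$, while the reverse inequality follows on letting $\xi\to 0+$ and using continuity of $R(\cdot,A)$ on $\varrho(A)\supseteq i\mathbb{R}$. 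Thus~(ii) is equivalent to $\sup_{\eta\in\mathbb{R}}\|R(i\eta,A)(-A)^{-\alpha}\|<\infty$; moreover $\|R(i\eta,A)(-A)^{-\alpha}\|$ is automatically bounded on every compact $\eta$-interval because $R(\cdot,A)$ is continuous on $i\mathbb{R}$ and $(-A)^{-\alpha}\in\mathcal{L}(X)$, so only the behaviour as $|\eta|\to\infty$ matters.

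The core of the argument is to compare $R(i\eta,A)(-A)^{-\alpha}$ with $(-i\eta)^{-\alpha}R(i\eta,A)$ for large $|\eta|$. Suppose first $0<\alpha<1$. Starting from the subordination formula $(-A)^{-\alpha} = \frac{\sin\pi\alpha}{\pi}\int_0^{\infty}s^{-\alpha}R(s,A)\,ds$ (which converges since $0\in\varrho(A)$, so $\|R(s,A)\|$ is bounded near $0$ and is $O(1/s)$ as $s\to\infty$), multiplying on the left by $R(i\eta,A)$, inserting the resolvent identity $R(i\eta,A)R(s,A) = (s-i\eta)^{-1}\bigl(R(i\eta,A)-R(s,A)\bigr)$ under the integral, and using the scalar evaluation $\frac{\sin\pi\alpha}{\pi}\int_0^{\infty}\frac{s^{-\alpha}}{s-i\eta}\,ds = (-i\eta)^{-\alpha}$ (valid for $\eta\neq 0$), one arrives at
\[
R(i\eta,A)(-A)^{-\alpha} = (-i\eta)^{-\alpha}R(i\eta,A) - J(\eta), \qquad J(\eta):=\frac{\sin\pi\alpha}{\pi}\int_0^{\infty}\frac{s^{-\alpha}}{s-i\eta}\,R(s,A)\,ds .
\]
A routine estimate using $|s-i\eta|\ge\max(s,|\eta|)$ together with the bounds on $\|R(s,A)\|$ shows that $\sup_{|\eta|\ge 1}\|J(\eta)\|<\infty$ (indeed $\|J(\eta)\| = O(1/|\eta|)$). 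Since $|(-i\eta)^{-\alpha}| = |\eta|^{-\alpha}$, it follows that $\|R(i\eta,A)(-A)^{-\alpha}\|$ and $|\eta|^{-\alpha}\|R(i\eta,A)\|$ differ by a uniformly bounded amount for $|\eta|\ge 1$, hence one is bounded there if and only if the other is; combined with the previous paragraph this yields the equivalence of~(ii) with the imaginary-axis estimate. For general $\alpha>0$ I would run the same computation with the subordination formula replaced by the corresponding $m$-fold resolvent representation of $(-A)^{-\alpha}$ for an integer $m>\alpha$; alternatively this whole step is \cite[Lemma~2.3]{Borichev2010}.

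The one genuinely non-trivial input is the Borichev--Tomilov equivalence between~(i) and $\|R(i\eta,A)\| = O(|\eta|^{\alpha})$: a naive Laplace-transform estimate shows only that~(i) forces $\|R(\lambda,A)(-A)^{-\alpha}\| = O\bigl(\log(1/\re\lambda)\bigr)$ as $\re\lambda\to 0+$, and removing the logarithm relies on the Hilbert space Plancherel argument of \cite{Borichev2010}. Accordingly, in the final write-up I would simply record this theorem as a consequence of \cite[Lemma~2.3 and Theorem~2.4]{Borichev2010}, reproducing the short reduction above only if a self-contained account is wanted.
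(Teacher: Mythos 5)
Your proposal is correct and takes essentially the same route as the paper, which gives no argument of its own and simply cites Lemma~2.3 and Theorem~2.4 of \cite{Borichev2010}: the genuinely Hilbert-space step, the equivalence of (i) with $\|R(i\eta,A)\|=O(|\eta|^{\alpha})$, is Theorem~2.4 there, and your Balakrishnan-formula comparison of $R(i\eta,A)(-A)^{-\alpha}$ with $(-i\eta)^{-\alpha}R(i\eta,A)$ (together with the resolvent-identity reduction from $\mathbb{C}_+$ to $i\mathbb{R}$) is a correct reconstruction of Lemma~2.3 for $0<\alpha<1$. The only loose end is the case $\alpha\geq 1$, which you wave at via an unspecified $m$-fold resolvent representation before falling back on the citation, but since you ultimately record the theorem as a consequence of exactly the two results the paper itself invokes, this is immaterial.
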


\subsection{Plancherel's theorem}
Let $X$ and $Y$ be Hilbert spaces.
Let $(T(t))_{t \geq 0}$ be a bounded $C_0$-semigroup on 
$X$ with generator $A$, and let $C \in \mathcal{L}(D(A),Y)$.
For fixed $x \in X$, $y \in D(A)$, and $\xi>0$, define the functions $f\colon \mathbb{R} \to X$ and
$g\colon \mathbb{R} \to Y$ by
\begin{align*}
f(t) \coloneqq 
\begin{cases}
e^{-\xi t} T(t)x, & t \geq 0, \\
0, & t < 0,
\end{cases}\qquad
g(t) \coloneqq 
\begin{cases}
te^{-\xi t} CT(t)y, & t \geq 0, \\
0, & t < 0.
\end{cases}
\end{align*}
Their Fourier transforms $\mathcal{F} f$ and $\mathcal{F} g$  
are given by $(\mathcal{F} f)(\eta) = R(\xi+i\eta,A)x$ and 
$(\mathcal{F} g)(\eta) = CR(\xi +i\eta)^2y$ for 
$\eta \in \mathbb{R}$.
Applying
Hilbert-space-valued Plancherel's theorem (see, e.g., \cite[Theorem~1.8.2]{Arendt2001}
and \cite[Theorem~C.14]{Engel2000}) to $f$
and $g$, we obtain
the next result, which is also called Plancherel's theorem throughout this paper.
\begin{theorem}
	Let $X$ and $Y$ be Hilbert spaces.
	Let $(T(t))_{t \geq 0}$ be a bounded $C_0$-semigroup on
	a Hilbert space $X$ with generator $A$, and let $C \in \mathcal{L}(D(A),Y)$.
	For all $x \in X$, $y \in D(A)$, and $\xi>0$, 
	\begin{align*}
	\int^{\infty}_0 \|e^{-\xi t} T(t)x\|^2 dt &= 
	\frac{1}{2\pi} \int^{\infty}_{-\infty} \| R(\xi +i\eta,A)x\|^2 d\eta, \\
	\int^{\infty}_0 \|t e^{-\xi t} CT(t)y\|^2 dt &= 
	\frac{1}{2\pi} \int^{\infty}_{-\infty} \| CR(\xi +i\eta,A)^2y\|^2 d\eta.
	\end{align*}
\end{theorem}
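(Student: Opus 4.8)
The plan is to derive both identities as direct applications of the Hilbert-space-valued Plancherel theorem to the functions $f\colon\mathbb{R}\to X$ and $g\colon\mathbb{R}\to Y$ introduced just before the statement. The work then splits into two routine verifications: that $f\in L^2(\mathbb{R};X)$ and $g\in L^2(\mathbb{R};Y)$, and that their Fourier transforms are the claimed resolvent expressions. Feeding these into the scalar identity $\int_{\mathbb{R}}\|h(t)\|^2\,dt=\frac{1}{2\pi}\int_{\mathbb{R}}\|(\mathcal{F}h)(\eta)\|^2\,d\eta$ for $h=f$ and $h=g$, and using that $f$ and $g$ vanish on $(-\infty,0)$, yields the two formulas.

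For integrability, let $M_0\geq 1$ be a bound for $\|T(t)\|$ on $[0,\infty)$. Then $\|f(t)\|\leq M_0 e^{-\xi t}\|x\|$ for $t\geq 0$, so $f\in L^1(\mathbb{R};X)\cap L^2(\mathbb{R};X)$. For $g$, since $y\in D(A)$ and $T(t)$ commutes with $A$, one has $T(t)y\in D(A)$ with $\|T(t)y\|_A=\|T(t)y\|+\|T(t)Ay\|\leq M_0\|y\|_A$, hence $\|CT(t)y\|\leq M_0\|C\|_{\mathcal{L}(D(A),Y)}\|y\|_A$ and $\|g(t)\|\leq t M_0\|C\|_{\mathcal{L}(D(A),Y)}e^{-\xi t}\|y\|_A$, which lies in $L^1\cap L^2$.

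For the Fourier transforms I use the convention $(\mathcal{F}h)(\eta)=\int_{\mathbb{R}}e^{-i\eta t}h(t)\,dt$. Since $\xi>0$ and the semigroup is bounded, the Laplace representation $R(\lambda,A)x=\int_0^\infty e^{-\lambda t}T(t)x\,dt$ is valid for $\re\lambda=\xi$, which immediately gives $(\mathcal{F}f)(\eta)=R(\xi+i\eta,A)x$. For $g$, I first observe that the Bochner integral $\int_0^\infty te^{-(\xi+i\eta)t}T(t)y\,dt$ converges absolutely \emph{in the Banach space $D(A)$} by the estimate above; mapping it into $X$ through the bounded injection $D(A)\hookrightarrow X$ identifies it with the corresponding $X$-valued integral, which equals $R(\xi+i\eta,A)^2y$ --- this is the formula $R(\lambda,A)^2y=-\frac{d}{d\lambda}R(\lambda,A)y=\int_0^\infty te^{-\lambda t}T(t)y\,dt$, obtained by differentiating the Laplace representation under the integral sign on $\mathbb{C}_+$. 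Since $C\in\mathcal{L}(D(A),Y)$ is continuous and therefore commutes with the $D(A)$-valued Bochner integral, $(\mathcal{F}g)(\eta)=CR(\xi+i\eta,A)^2y$, as needed.

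I expect the only genuinely delicate point to be this last interchange of $C$ with the integral: because $C$ is bounded only with respect to the graph norm, one must run the Bochner integral in $D(A)$ rather than in $X$, and separately check both its convergence there and that its value --- which a priori lies in $D(A)$, in fact in $D(A^2)$ --- agrees with $R(\xi+i\eta,A)^2y$. Everything else (the $L^2$ bounds, the differentiation under the integral sign, and the normalization constant $1/(2\pi)$) is bookkeeping.
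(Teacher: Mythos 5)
Your proof is correct and follows the same route as the paper, which simply defines $f$ and $g$, identifies their Fourier transforms with $R(\xi+i\eta,A)x$ and $CR(\xi+i\eta,A)^2y$, and invokes the vector-valued Plancherel theorem. Your careful justification of pulling $C$ through the integral by running the Bochner integral in the Banach space $D(A)$ is exactly the detail the paper leaves implicit, and it is handled correctly.
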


\section{Polynomial decay of semigroup orbits}
\label{sec:polynomial}
In this section,
we study the relation of the Weiss condition and infinite-time admissibility to
polynomial rates of decay of semigroup orbits.

\subsection{Decay of Hilbert space semigroups and the Weiss condition}
The $p$-Weiss condition on $(-A)^{-\alpha/p}$ characterizes the decay of 
$\|T(t)A^{-1}\|$
in the Hilbert space context, which is the main result of this section.
The case $p=1$  was obtained in Lemma~2.3 and Theorem~2.4 of \cite{Borichev2010}, which
is stated as 
Theorem~\ref{thm:uniform_bounded} in this paper.
\begin{theorem}
	\label{thm:decay_rate_equivalent}
	Let $(T(t))_{t \geq 0}$ be a bounded $C_0$-semigroup on a Hilbert 
	space $X$ with generator $A$ such that $0 \in  \varrho(A)$.
	For fixed $\alpha >0$ and $1 \leq p < \infty$, the following statements are equivalent:
	\begin{enumerate}[\em (\roman{enumi})]
		\item 
		$\|T(t) A^{-1}\| = O(t^{-1/\alpha})$ as $t\to \infty$.
		\item 
		$(-A)^{-\alpha/p}$ satisfies the $p$-Weiss condition for $A$.
	\end{enumerate}
\end{theorem}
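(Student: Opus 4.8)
The plan is to reduce everything to the case $p=1$ --- which is Theorem~\ref{thm:uniform_bounded} --- by combining the smoothing of fractional powers of $(-A)^{-1}$ with Plancherel's theorem. Since $0 \in \varrho(A)$, the operator $B \coloneqq (-A)^{-1}$ is bounded, sectorial, and commutes with $T(t)$, so Proposition~\ref{prop:interpolation} (with this $B$ and $\beta = \alpha/p$) shows that statement~(i) is equivalent to the decay estimate $\|T(t)(-A)^{-\alpha/p}\| = O(t^{-1/p})$ as $t\to\infty$. I would also note that (i) gives $\|T(t)A^{-1}\| \to 0$, which together with $0 \in \varrho(A)$ forces $i\mathbb{R} \subset \varrho(A)$ by a standard argument (any $i\eta_0 \in \sigma(A)$ with $\eta_0 \neq 0$ lies on the boundary of $\sigma(A) \subseteq \overline{\mathbb{C}}_-$, hence is an approximate eigenvalue, and an approximate eigenvector produces orbits $T(t)A^{-1}x_n$ of norm bounded away from $0$); this makes Theorem~\ref{thm:uniform_bounded} available for the direction (i)~$\Rightarrow$~(ii).

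For (i) $\Rightarrow$ (ii): by Theorem~\ref{thm:uniform_bounded}, $M \coloneqq \sup_{\lambda \in \mathbb{C}_+}\|R(\lambda,A)(-A)^{-\alpha}\|$ is finite, while the Hille--Yosida estimate gives $\|R(\lambda,A)\| \leq c(\re\lambda)^{-1}$ with $c \coloneqq \sup_{t \geq 0}\|T(t)\|$. The moment inequality for fractional powers of the bounded sectorial operator $(-A)^{-1}$, interpolating the exponents $\alpha/p$ and $\alpha$, provides a constant $c_0 > 0$ with $\|(-A)^{-\alpha/p}w\| \leq c_0\|w\|^{1-1/p}\|(-A)^{-\alpha}w\|^{1/p}$ for every $w \in X$. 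Taking $w = R(\lambda,A)x$ and inserting the two bounds yields $\|(-A)^{-\alpha/p}R(\lambda,A)x\| \leq c_0 c^{1-1/p} M^{1/p}\|x\|\,(\re\lambda)^{-(1-1/p)}$, which is exactly the $p$-Weiss condition for $(-A)^{-\alpha/p}$.

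The substantive direction is (ii) $\Rightarrow$ (i), and here I would establish the equivalent decay estimate directly, applying Plancherel's theorem twice. Fix $y \in D(A)$ and $\xi > 0$, and let $K$ be the constant in the $p$-Weiss condition. Applying the second Plancherel identity with $C = (-A)^{-\alpha/p}$, estimating the integrand by
\[
\|(-A)^{-\alpha/p}R(\xi+i\eta,A)^2 y\| \leq \|(-A)^{-\alpha/p}R(\xi+i\eta,A)\|\,\|R(\xi+i\eta,A)y\| \leq K\xi^{-(1-1/p)}\|R(\xi+i\eta,A)y\| ,
\]
and then rewriting $\int_{\mathbb{R}}\|R(\xi+i\eta,A)y\|^2\,d\eta = 2\pi\int_0^\infty e^{-2\xi t}\|T(t)y\|^2\,dt$ and bounding the right-hand side by $\pi c^2\|y\|^2/\xi$ via the first Plancherel identity, I would arrive at
\[
\int_0^\infty t^2 e^{-2\xi t}\|T(t)(-A)^{-\alpha/p}y\|^2\,dt \leq \frac{c^2 K^2}{2}\,\xi^{-(3-2/p)}\|y\|^2 \qquad (\xi > 0,\ y \in D(A)),
\]
with the bound uniform over $\|y\| \leq 1$. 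Since $h(t) \coloneqq \|T(t)(-A)^{-\alpha/p}y\|^2$ satisfies $h(t) \leq c^2 h(s)$ for $0 \leq s \leq t$, a Tauberian argument (take $\xi = 1/t$ and restrict the integral on the left to $[t/2,t]$) gives $h(t) \leq C t^{-2/p}\|y\|^2$; note that $3 - (3-2/p) = 2/p$, so the exponent is precisely the one needed. Hence $\|T(t)(-A)^{-\alpha/p}y\| \leq C^{1/2} t^{-1/p}\|y\|$ first for $y \in D(A)$ and then, by density and boundedness of $T(t)(-A)^{-\alpha/p}$, for all $y \in X$; by the reduction this is statement~(i). (Specialised to $p=1$, this argument reproves the nontrivial half of Theorem~\ref{thm:uniform_bounded}.)

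The main obstacle is this last step, and the decisive point is to use Plancherel's theorem in \emph{both} directions. The $p$-Weiss bound is constant in the imaginary variable, so a direct estimate of $\int_{\mathbb{R}}\|(-A)^{-\alpha/p}R(\xi+i\eta,A)^2 y\|^2\,d\eta$ diverges, and trying to salvage it by extracting extra decay in $\eta$ from the smoothness of $y$ produces only non-uniform and non-sharp bounds. Keeping one resolvent factor as $R(\xi+i\eta,A)y$ and sending it back to a time integral replaces the problematic quantity by $\int_{\mathbb{R}}\|R(\xi+i\eta,A)y\|^2\,d\eta$, which is controlled by $\pi c^2\|y\|^2/\xi$ using nothing but $\sup_{t\geq 0}\|T(t)\| = c$; this is what makes the final estimate both finite and uniform in $y$, and the bookkeeping of the power $\xi^{-(3-2/p)}$ against the target rate $t^{-1/p}$ is what keeps the Tauberian step from losing.
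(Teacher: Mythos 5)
Your proof is correct, and the engine of the hard direction (ii) $\Rightarrow$ (i) -- two applications of Plancherel's theorem, keeping one resolvent factor as $R(\xi+i\eta,A)y$ so that it can be sent back to a time integral controlled only by $\sup_{t\ge 0}\|T(t)\|$ -- is exactly the mechanism the paper isolates in Proposition~\ref{prop:WC_to_poly_decay} (applied there with the general bound $F(\re\lambda)$ and specialised to $F(\xi)=K\xi^{-(1-1/p)}$). The two places where you genuinely deviate are both fine and worth noting. First, to pass from the weighted $L^2$ bound $\int_0^\infty t^2e^{-2\xi t}\|CT(t)y\|^2\,dt\lesssim \xi^{-(3-2/p)}\|y\|^2$ to a pointwise estimate, you use the quasi-monotonicity $\|T(t)Cy\|\le c\,\|T(s)Cy\|$ for $s\le t$ and restrict the integral to $[t/2,t]$; the paper instead uses the identity $CT(\tau)x=\tfrac{2}{\tau^2}\int_0^\tau tT(\tau-t)CT(t)x\,dt$ and Cauchy--Schwarz against $T(\tau-t)^*y$. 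Your Tauberian step is shorter but leans on the semigroup commuting with $C$, which holds here; the paper's convolution identity is what lets Proposition~\ref{prop:WC_to_poly_decay} be stated for a general commuting $C$ with a clean extension statement. Second, for (i) $\Rightarrow$ (ii) you go through the $p=1$ case (Theorem~\ref{thm:uniform_bounded}) and then interpolate the resolvent bounds $\|R(\lambda,A)(-A)^{-\alpha}\|\le M$ and $\|R(\lambda,A)\|\le c/\re\lambda$ via the moment inequality for fractional powers of $(-A)^{-1}$; the paper instead converts the decay $\|T(t)(-A)^{-\alpha/p}\|=O(t^{-1/p})$ (from Proposition~\ref{prop:interpolation}) directly into the $p$-Weiss condition by a Laplace-transform estimate (Lemma~\ref{lem:poly_decay_to_WC}, valid for $1<p\le\infty$ on Banach spaces), reserving Theorem~\ref{thm:uniform_bounded} only for $p=1$. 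Your route has the side benefit of proving the interpolation Corollary stated after the theorem in one stroke, at the cost of invoking the Hilbert-space result of Borichev--Tomilov for every $p$ rather than only for $p=1$.
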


Before proving Theorem~\ref{thm:decay_rate_equivalent}, we present related results.
As a corollary of Theorem~\ref{thm:decay_rate_equivalent},
we separately state
an interpolation property for the generator of a bounded $C_0$-semigroup 
on a Hilbert space,
which is the resolvent analogue of Proposition~\ref{prop:interpolation} with $B\coloneqq (-A)^{-1}$.
\begin{corollary}
	Let $(T(t))_{t \geq 0}$ be a bounded $C_0$-semigroup on a Hilbert 
	space $X$ with generator $A$ such that $0 \in \varrho(A)$.
	For fixed $\alpha> \beta >0$, the following statements are equivalent: 
	\begin{enumerate}[\em (\roman{enumi})]
		\item 
		There exists $M_\alpha>0$ such that
		$\|R(\lambda,A) (-A)^{- \alpha}\| \leq M_\alpha$ for all $\lambda \in
		\mathbb{C}_+$.
		\item 
		There exists $M_{\alpha,\,\beta}>0$ such that
		\[
		\|R(\lambda,A) (-A)^{-\beta}\| \leq \frac{M_{\alpha,\,\beta}}{(\re \lambda)^{1-\beta/\alpha}}
		\]
		for all $\lambda \in
		\mathbb{C}_+$.
	\end{enumerate}
\end{corollary}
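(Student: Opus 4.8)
The plan is to obtain the corollary directly from Theorem~\ref{thm:decay_rate_equivalent} by substituting a suitable value of the parameter $p$, with the decay property $\|T(t)A^{-1}\| = O(t^{-1/\alpha})$ serving as the common bridge between the two resolvent estimates. No analytic input beyond Theorem~\ref{thm:decay_rate_equivalent} (and, for one half of the bridge, its $p=1$ incarnation Theorem~\ref{thm:uniform_bounded}) together with the fact that fractional powers of $-A$ commute with $R(\lambda,A)$ should be required; this is really the observation that the corollary is the resolvent shadow, with $B \coloneqq (-A)^{-1}$, of the interpolation statement Proposition~\ref{prop:interpolation}.

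First I would fix the dictionary between parameters. Since $\alpha > \beta > 0$, set $p \coloneqq \alpha/\beta$, so that $1 < p < \infty$, hence $p$ lies in the admissible range of Theorem~\ref{thm:decay_rate_equivalent}; moreover $\alpha/p = \beta$ and $1 - 1/p = 1 - \beta/\alpha$. With this choice, the $p$-Weiss condition for $A$ on $C \coloneqq (-A)^{-\alpha/p} = (-A)^{-\beta}$ reads precisely: there is $K > 0$ with
\[
\|(-A)^{-\beta} R(\lambda,A)\| \leq \frac{K}{(\re\lambda)^{1-\beta/\alpha}} \qquad \text{for all } \lambda \in \mathbb{C}_+.
\]
Since $(-A)^{-\beta}$ commutes with $R(\lambda,A)$ for every $\lambda \in \varrho(A)$, as recalled in Section~\ref{sec:Preliminaries}, we have $\|(-A)^{-\beta} R(\lambda,A)\| = \|R(\lambda,A)(-A)^{-\beta}\|$, so the displayed estimate is literally statement (ii), with $K$ and $M_{\alpha,\,\beta}$ playing interchangeable roles. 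By the same commutation, the $1$-Weiss condition on $(-A)^{-\alpha}$ is exactly the uniform bound in statement (i).

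It then remains to apply Theorem~\ref{thm:decay_rate_equivalent} twice. Used with the exponent pair $(\alpha, p)$ chosen above, it gives (ii) $\Longleftrightarrow$ $\|T(t)A^{-1}\| = O(t^{-1/\alpha})$; used with $p = 1$ (equivalently, invoking Theorem~\ref{thm:uniform_bounded}), it gives (i) $\Longleftrightarrow$ $\|T(t)A^{-1}\| = O(t^{-1/\alpha})$. Chaining these through the common middle statement yields (i) $\Longleftrightarrow$ (ii). There is essentially no obstacle beyond bookkeeping here: the only point deserving a line of care is that the hypothesis $0 \in \varrho(A)$ of the corollary is exactly the hypothesis of Theorem~\ref{thm:decay_rate_equivalent}, so that $(-A)^{-\alpha}$, $(-A)^{-\beta}$, and the $p$-Weiss reformulation are all meaningful, and that the arithmetic $\alpha/p = \beta$, $1 - 1/p = 1 - \beta/\alpha$ is consistent — both of which are immediate.
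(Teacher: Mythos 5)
Your argument is correct and is exactly the route the paper intends: the corollary is stated without proof as an immediate consequence of Theorem~\ref{thm:decay_rate_equivalent}, and your substitution $p=\alpha/\beta$ together with the $p=1$ case (both covered by that theorem under the sole hypothesis $0\in\varrho(A)$) chains (i) and (ii) through the common decay statement $\|T(t)A^{-1}\|=O(t^{-1/\alpha})$ precisely as the author envisages. The bookkeeping on the exponents and the commutation of $(-A)^{-\beta}$ with $R(\lambda,A)$ is all that is needed, and you have checked it.
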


Theorem~\ref{thm:decay_rate_equivalent} provides
a resolvent characterization of rates of decay for the orbits $T(t)x$ with $x \in D(A)$.
We will give analogous results on
decay rates for the orbits $T(t)x$ with $x \in \ran(A)$ and 
$x \in D(A) \cap \ran(A)$.
We begin by presenting some background materials on these orbits.

In the case $x \in \ran(A)$, 
we are interested in the orbits of the form $T(t)Ay$ for $y \in D(A)$ and hence 
study the decay rate of $\|T(t)A(I-A)^{-1}\|$.
Since $D(A) \cap \ran(A) =  \ran(A (I-A)^{-2})$ by 
\cite[Proposition~3.10]{Batty2016},
the problem we address in the case $x \in D(A) \cap \ran(A)$ is to quantify
the decay of $\|T(t)A(I-A)^{-2}\|$.
By \cite[Lemma~3.2]{Batty2016}, $-A(I-A)^{-1}$  and $-A(I-A)^{-2}$
are sectorial. Therefore, the fractional powers
$(-A(I-A)^{-1})^{\alpha}$ and
$(-A(I-A)^{-2})^{\alpha}$ are well defined for all $\alpha >0$. 
Using the product and composition rules (see, e.g., 
Theorem 3.7 (iv) and Remark 3.8 (iv) of \cite{Batty2016}),
we obtain
\[
(-A(I-A)^{-k})^{\alpha} = (-A)^{\alpha}(I-A)^{-k\alpha}
\]
for each $\alpha >0$ and $k \in \{1,2  \}$.

When $0 \in \varrho(A)$,
the decay of $\|T(t)A(I-A)^{-1}\|$ to zero is equivalent to 
exponential stability of $(T(t))_{\geq 0}$, and
the decay  rate of $\|T(t)A(I-A)^{-2}\|$
is the same as that of $\|T(t)A^{-1}\|$.
Therefore, we assume that $0 \in \sigma(A)$. 
Moreover,
we concentrate on the situation where 
$\|T(t)A(I-A)^{-1}\|$ and $\|T(t)A(I-A)^{-2}\|$  decay no faster than $t^{-1}$ as $t \to \infty$.
If not, $0$ is the eigenvalue of $A$ and an isolated point of $\sigma(A)$,
which is not in our interest; see Theorem~6.9 and a paragraph before Theorem~8.1 
of \cite{Batty2016} for details. 

We provide a characterization of decay rates for
$\|T(t)A(I-A)^{-1}\|$  and $\|T(t)A(I-A)^{-2}\|$ by the $p$-Weiss condition.
This characterization in the case $p=1$ was obtained in 
Corollary~7.5 and Theorem~7.6 of \cite{Batty2016} for $\|T(t)A(I-A)^{-1}\|$
and in
Theorem~8.4 and its proof of \cite{Batty2016}
for $\|T(t)A(I-A)^{-2}\|$.
The proof will be given after that of Theorem\ref{thm:decay_rate_equivalent}.
\begin{theorem}
	\label{thm:singularity_zero}
	Let $(T(t))_{t \geq 0}$ be a bounded $C_0$-semigroup on a Hilbert 
	space $X$ with generator $A$ such that $0 \in \sigma(A)$.
	For fixed $\alpha \geq 1$, $1 \leq p < \infty$, and $k \in \{1,2\}$, 
	the following statements are equivalent:
	\begin{enumerate}[\em (\roman{enumi})]
		\item 
		$\|T(t) A(I-A)^{-k}\| = O(t^{-1/\alpha})$ as $t\to \infty$.
		\item 
		$(-A)^{\alpha/p}(I-A)^{-k\alpha/p}$ satisfies the $p$-Weiss condition for $A$.
	\end{enumerate}
\end{theorem}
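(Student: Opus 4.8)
The plan is to run the argument for Theorem~\ref{thm:decay_rate_equivalent} with the bounded sectorial operator $(-A)^{-1}$ replaced by $B \coloneqq -A(I-A)^{-k}$. By \cite[Lemma~3.2]{Batty2016} the operator $B$ is sectorial; moreover $B$ is a polynomial in the bounded operator $(I-A)^{-1} = R(1,A)$ (namely $B = I - R(1,A)$ if $k=1$ and $B = R(1,A) - R(1,A)^2$ if $k=2$), so $B \in \mathcal{L}(X)$, $B$ commutes with $T(t)$ for all $t \geq 0$, and hence so does $B^{\alpha/p}$. By the composition rule $(-A(I-A)^{-k})^{\gamma} = (-A)^{\gamma}(I-A)^{-k\gamma}$ for $\gamma > 0$, the operator $(-A)^{\alpha/p}(I-A)^{-k\alpha/p}$ equals $B^{\alpha/p}$, which is bounded because it is a fractional power with positive exponent of a bounded operator. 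Thus statement (ii) says precisely that $B^{\alpha/p}$ satisfies the $p$-Weiss condition for $A$, and the theorem reduces to
\[
\|T(t)B\| = O(t^{-1/\alpha}) \text{ as } t\to\infty \iff B^{\alpha/p} \text{ satisfies the $p$-Weiss condition for } A .
\]
When $p=1$ this is \cite[Corollary~7.5, Theorem~7.6]{Batty2016} (for $k=1$) and \cite[Theorem~8.4]{Batty2016} (for $k=2$), so I assume $p>1$ below.

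For the forward implication I would first apply Proposition~\ref{prop:interpolation} with $\beta=\alpha/p$ to deduce $\|T(t)B^{\alpha/p}\| = O(t^{-1/p})$. Putting $C \coloneqq B^{\alpha/p}$ (bounded, commuting with $T(t)$), the Laplace representation $CR(\lambda,A) = \int_0^\infty e^{-\lambda t}CT(t)\,dt$ gives $\|CR(\lambda,A)\| \leq \int_0^\infty e^{-(\re\lambda)t}\|CT(t)\|\,dt$ for $\lambda\in\mathbb{C}_+$. Splitting this integral and using that $\|CT(t)\| = O(t^{-1/p})$ (with $1/p<1$, so that $\int_0^\infty e^{-s}s^{-1/p}\,ds<\infty$), together with the Hille--Yosida bound $\|R(\lambda,A)\|\le M/\re\lambda$ for the range $\re\lambda\geq 1$, one obtains $\|CR(\lambda,A)\| \leq K/(\re\lambda)^{1-1/p}$ on $\mathbb{C}_+$, which is the $p$-Weiss condition.

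The converse is the substantial direction. Assume $C\coloneqq B^{\alpha/p}$ satisfies $\|CR(\lambda,A)\| \leq K(\re\lambda)^{-(1-1/p)}$. For $y\in D(A)$ and $\xi>0$ I would factor $CR(\xi+i\eta,A)^2 y = CR(\xi+i\eta,A)\bigl(R(\xi+i\eta,A)y\bigr)$ and apply the $p$-Weiss bound to the outer factor, getting $\|CR(\xi+i\eta,A)^2 y\|^2 \leq K^2\xi^{-2(1-1/p)}\|R(\xi+i\eta,A)y\|^2$. Integrating over $\eta$, using the second Plancherel identity on the left and the first on the right, and bounding $\int_0^\infty e^{-2\xi t}\|T(t)y\|^2\,dt \leq \tilde M^2\|y\|^2/(2\xi)$ with $\tilde M = \sup_{t\geq 0}\|T(t)\|$, gives
\[
\int_0^\infty t^2 e^{-2\xi t}\,\|CT(t)y\|^2\,dt \;\leq\; \frac{K^2\tilde M^2}{2\,\xi^{\,3-2/p}}\,\|y\|^2 , \qquad \xi>0 .
\]
Since $C$ commutes with the semigroup, $t\mapsto\|CT(t)y\|$ satisfies $\|CT(t)y\|\leq\tilde M\|CT(s)y\|$ for $t\geq s$; taking $\xi=1/s$ and bounding the integral below over $[s/2,s]$ then yields $\|CT(s)y\| \leq C_2 s^{-1/p}\|y\|$ for all $s>0$ and $y\in D(A)$, hence $\|T(t)B^{\alpha/p}\| = \|B^{\alpha/p}T(t)\| = O(t^{-1/p})$ by density. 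Proposition~\ref{prop:interpolation}, now in the reverse direction with $\beta=\alpha/p$, upgrades this to $\|T(t)B\| = O(t^{-1/\alpha})$, i.e.\ statement (i).

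The main obstacle is this converse step, inherited from Theorem~\ref{thm:decay_rate_equivalent}: since the semigroup need not be analytic, the resolvent estimate cannot be converted to a pointwise bound on $CT(t)$ directly, and one must pass through the two Plancherel identities --- inserting the square $R(\lambda,A)^2$ is what makes the $\eta$-integral converge. The bound obtained there is only an averaged one, so recovering the pointwise decay relies on the quasi-monotonicity of $t\mapsto\|CT(t)y\|$, which in turn uses that $C=B^{\alpha/p}$ commutes with the semigroup. A minor but essential point is that $B^{\alpha/p}$ is genuinely a bounded operator even though $0\in\sigma(A)$ (hence $0\in\sigma(B)$): this, which legitimizes the Laplace- and Fourier-transform manipulations, is exactly where the boundedness of $B = -A(I-A)^{-k}$ and the composition rule from \cite{Batty2016} are used.
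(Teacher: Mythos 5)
Your proof is correct and follows essentially the same route as the paper: reduce to $B=-A(I-A)^{-k}$ via the composition rule, invoke the $p=1$ case from Batty--Chill--Tomilov, use Proposition~\ref{prop:interpolation} to pass between $\|T(t)B\|$ and $\|T(t)B^{\alpha/p}\|$, and then apply the Laplace-transform estimate (the content of Lemma~\ref{lem:poly_decay_to_WC}) for (i)$\Rightarrow$(ii) and the double-Plancherel argument (Proposition~\ref{prop:WC_to_poly_decay}) for (ii)$\Rightarrow$(i). The only deviation is cosmetic: in the last step of the converse you extract the pointwise bound from the averaged $L^2$ bound by quasi-monotonicity of $t\mapsto\|CT(t)y\|$ and a lower bound of the integral over $[s/2,s]$, whereas the paper uses the identity $CT(\tau)x=\tfrac{2}{\tau^2}\int_0^\tau tT(\tau-t)CT(t)x\,dt$ and Cauchy--Schwarz; both are valid.
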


Let us now turn to the proof of Theorem~\ref{thm:decay_rate_equivalent}.
First we describe how polynomial rates of decay of $C_0$-semigroups
can be transferred to resolvent growth on $\mathbb{C}_+$
in the Banach space setting.
The following result is obtained by a slight modification of the proof of
the implication (ii) $\Rightarrow$ (i) of \cite[Lemma~2.3]{Bounit2010}.
Although analyticity of $C_0$-semigroups
is assumed in \cite[Lemma~2.3]{Bounit2010}, 
the implication (ii) $\Rightarrow$ (i) there can be proved only under  a 
suitable boundedness condition. Recall that $1/p \coloneqq 0$ for $p =\infty$.
\begin{lemma}
	\label{lem:poly_decay_to_WC}
	Let $X$ and $Y$ be Banach spaces.
	Let $(T(t))_{t \geq 0}$ be a bounded $C_0$-semigroup on  $X$ with generator $A$, and
	let $C \in \mathcal{L}(D(A),Y)$ be such that
	$CT(t)$ extends to
	an operator (also denoted by $CT(t)$) in $\mathcal{L}(X,Y)$  for all $t >0$. If
	there exist $M>0$ and $1< p  \leq \infty$ such that 
	\[
	\|CT(t)\| \leq \frac{M}{t^{1/p}}
	\]
	for all $t >0$, 
	then $C$ satisfies the $p$-Weiss condition for $A$.
\end{lemma}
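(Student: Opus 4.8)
The plan is to represent the resolvent as a Laplace transform of the semigroup and then insert the pointwise decay bound on $\|CT(t)\|$. Since $(T(t))_{t\ge0}$ is bounded, its growth bound is nonpositive, so $\mathbb{C}_+\subset\varrho(A)$ and
\[
R(\lambda,A)x=\int_0^\infty e^{-\lambda t}T(t)x\,dt\qquad(x\in X,\ \lambda\in\mathbb{C}_+).
\]
Moreover $R(\lambda,A)$ maps $X$ boundedly into $D(A)$ with the graph norm, because $AR(\lambda,A)=\lambda R(\lambda,A)-I\in\mathcal{L}(X)$; hence $CR(\lambda,A)=C\circ R(\lambda,A)\in\mathcal{L}(X,Y)$, and to bound its operator norm it is enough to estimate $\|CR(\lambda,A)x\|$ for $x$ in the dense subspace $D(A)$.

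First I would fix $\lambda\in\mathbb{C}_+$ and $x\in D(A)$. Writing $M_0\coloneqq\sup_{t\ge0}\|T(t)\|$ and using $AT(t)x=T(t)Ax$, one has $\|T(t)x\|_A\le M_0\|x\|_A$ for all $t\ge0$, and $t\mapsto T(t)x$ is continuous in the graph norm; therefore $\int_0^\infty e^{-\lambda t}T(t)x\,dt$ converges as a Bochner integral in the Banach space $(D(A),\|\cdot\|_A)$, with value $R(\lambda,A)x$. Since $C\colon D(A)\to Y$ is bounded and bounded operators commute with Bochner integrals, this yields
\[
CR(\lambda,A)x=\int_0^\infty e^{-\lambda t}\,CT(t)x\,dt ,
\]
where $CT(t)x$ on the right is the value of the extension $CT(t)\in\mathcal{L}(X,Y)$ at $x$. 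Using $\|CT(t)x\|\le\|CT(t)\|\,\|x\|\le Mt^{-1/p}\|x\|$ together with the elementary identity $\int_0^\infty e^{-a t}t^{-1/p}\,dt=\Gamma\!\left(1-\tfrac1p\right)a^{1/p-1}$ (valid for $a>0$, with $t^{-1/p}$ integrable at $0$ precisely because $1/p<1$), I get
\[
\|CR(\lambda,A)x\|\le M\|x\|\int_0^\infty e^{-(\re\lambda)t}t^{-1/p}\,dt=M\,\Gamma\!\left(1-\tfrac1p\right)(\re\lambda)^{1/p-1}\|x\| .
\]
Taking the supremum over $x\in D(A)$ with $\|x\|\le1$ and using density of $D(A)$ in $X$ gives $\|CR(\lambda,A)\|\le K(\re\lambda)^{-(1-1/p)}$ for all $\lambda\in\mathbb{C}_+$ with $K\coloneqq M\,\Gamma(1-1/p)$, i.e.\ the $p$-Weiss condition. (For $p=\infty$ one reads $1/p=0$, so $\|CT(t)\|\le M$, the integral equals $M/\re\lambda$, and $\Gamma(1)=1$, which is consistent.)

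The argument is essentially routine, and I expect the only points needing care to be the justification that $C$ passes through the integral — which is exactly why I would run the Bochner integral in $(D(A),\|\cdot\|_A)$, where $C$ is bounded, rather than in $X$ — and the integrability of $t^{-1/p}$ near $t=0$, which forces the restriction $p>1$; the borderline case $p=1$ is genuinely different and is covered separately by Theorem~\ref{thm:uniform_bounded}. There is no deeper obstacle.
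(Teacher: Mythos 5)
Your proof is correct and is essentially the argument the paper has in mind: the paper omits a written proof and simply points to the Laplace-transform estimate from \cite[Lemma~2.3]{Bounit2010}, which is exactly your computation $CR(\lambda,A)x=\int_0^\infty e^{-\lambda t}CT(t)x\,dt$ combined with $\int_0^\infty e^{-at}t^{-1/p}\,dt=\Gamma(1-1/p)a^{1/p-1}$. Your care in running the Bochner integral in $(D(A),\|\cdot\|_A)$ so that $C$ passes through it, and your observation that only boundedness of the semigroup (not analyticity) is needed, match precisely the ``slight modification'' the paper alludes to.
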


Next we 
provide
a converse result, i.e.,
transference from resolvents to semigroups, in the Hilbert space setting.
A key technique for the proof is contained in the proof of \cite[Theorem~4.7]{Batty2016}
and the paragraph after \cite[Remark~4.8]{Batty2016}, which considered the case $F(\xi )
\equiv K >0$ in the proposition below.
\begin{proposition}
	\label{prop:WC_to_poly_decay}
	Let $(T(t))_{t \geq 0}$ be a bounded $C_0$-semigroup on a Hilbert
	space $X$ with generator $A$.
	Let $C \in \mathcal{L}(D(A),X)$ be such that 
	$T(t)$ commute with $C$ for all $t \geq 0$.
	Assume that $F\colon (0,\infty) \to [0,\infty)$ satisfies
	\begin{equation}
	\label{eq:F_bound}
	\|CR(\lambda,A)\| \leq F(\re \lambda)
	\end{equation}
	for all $\lambda \in \mathbb{C}_+$.
	Then
	the operator $CT(t)$ extends to
	a bounded linear operator (also denoted by $CT(t)$) on $X$ for all $t >0$,
	and there exists $M >0$
	such that 
	\[
	\|CT(t)\| \leq M \frac{F(1/t)}{t}
	\] 
	for all $t >0$. 
\end{proposition}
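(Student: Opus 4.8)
The plan is to run a Plancherel / Gearhart--Pr\"uss type argument combined with the commutativity hypothesis. Put $M_0\coloneqq\sup_{t\ge0}\|T(t)\|$ and fix $y\in D(A)$. First I would record a quasi-monotonicity property of the orbit $s\mapsto CT(s)y$: for $0\le s\le t$ one has $T(s)y\in D(A)=D(C)$ and, since $T(t-s)$ commutes with $C$, $CT(t)y = CT(t-s)\big(T(s)y\big) = T(t-s)\,CT(s)y$, hence $\|CT(t)y\|\le M_0\,\|CT(s)y\|$. Taking $s=0$ gives $\sup_{s\ge0}\|CT(s)y\|\le M_0\|Cy\|<\infty$, so for each $\xi>0$ the function $g(s)\coloneqq se^{-\xi s}CT(s)y$ lies in $L^1\cap L^2(\mathbb{R};X)$ and the second Plancherel identity of Section~\ref{sec:Preliminaries} applies to it.

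Next I would estimate the resolvent side. For $\lambda\in\mathbb{C}_+$, writing $CR(\lambda,A)^2y = CR(\lambda,A)\big(R(\lambda,A)y\big)$ and using $\|CR(\lambda,A)\|\le F(\re\lambda)$ from \eqref{eq:F_bound} yields $\|CR(\lambda,A)^2y\|\le F(\re\lambda)\,\|R(\lambda,A)y\|$. Combining the two Plancherel identities with $\|T(s)\|\le M_0$ then gives
\[
\int_0^\infty s^2 e^{-2\xi s}\|CT(s)y\|^2\,ds \;\le\; F(\xi)^2\int_0^\infty e^{-2\xi s}\|T(s)y\|^2\,ds \;\le\; \frac{M_0^2\,F(\xi)^2}{2\xi}\,\|y\|^2 .
\]
To convert this time-integrated bound into a pointwise one at a given $t>0$, I would restrict the left-hand integral to $s\in[t/2,t]$, where $\|CT(t)y\|\le M_0\|CT(s)y\|$ by the first step and $s^2e^{-2\xi s}\ge (t^2/4)e^{-2\xi t}$; this produces $\tfrac{t^3}{8M_0^2}e^{-2\xi t}\|CT(t)y\|^2\le \tfrac{M_0^2F(\xi)^2}{2\xi}\|y\|^2$. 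Choosing $\xi=1/t$ gives $\|CT(t)y\|\le 2eM_0^2\,F(1/t)/t\cdot\|y\|$. Since $D(A)$ is dense in $X$ and $F(1/t)<\infty$, $CT(t)$ extends to an operator in $\mathcal{L}(X)$ with $\|CT(t)\|\le M\,F(1/t)/t$ where $M\coloneqq 2eM_0^2$.

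The computations are routine; the one genuinely load-bearing point is the quasi-monotonicity of $\|CT(\cdot)y\|$ coming from commutativity, since it is precisely what allows localising the $L^2$-in-time estimate to a single time $t$. A small technical point worth spelling out is that $C$ may be pulled inside the vector-valued Laplace integral defining $R(\xi+i\eta,A)^2y$ (so that $(\mathcal{F}g)(\eta)=CR(\xi+i\eta,A)^2y$ as stated in the Preliminaries): this holds because $s\mapsto se^{-\xi s}T(s)y$ is Bochner integrable with respect to the graph norm of $A$ and $C\in\mathcal{L}(D(A),X)$.
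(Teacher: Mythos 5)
Your argument is correct, and its engine is the same as the paper's: apply Plancherel to $t e^{-\xi t}CT(t)y$, factor $CR(\lambda,A)^2 = CR(\lambda,A)\cdot R(\lambda,A)$ and use \eqref{eq:F_bound}, apply Plancherel again to bound $\int_{\mathbb{R}}\|R(\xi+i\eta,A)y\|^2\,d\eta$ by $\pi M_0^2\|y\|^2/\xi$, and finally take $\xi=1/t$. The only genuine divergence is in how the resulting $L^2$-in-time bound is turned into a pointwise bound. The paper writes $CT(\tau)x=\frac{2}{\tau^2}\int_0^\tau tT(\tau-t)CT(t)x\,dt$ (which is the commutativity hypothesis in disguise, since $T(\tau-t)CT(t)x=CT(\tau)x$) and then pairs against an arbitrary $y\in X$, using Cauchy--Schwarz and the uniform bound on $T(\cdot)^*$. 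You instead extract from commutativity the quasi-monotonicity $\|CT(t)y\|\le M_0\|CT(s)y\|$ for $s\le t$ and bound the integrand from below on $[t/2,t]$. Your route is slightly more elementary (no adjoint semigroup, no duality pairing) and yields a constant of the same order, $2eM_0^2$ versus the paper's $\sqrt{2}ec^2$; both localizations rest on exactly the same structural fact, namely that the semigroup can be slid past $C$ on $D(A)$. Your closing remark about justifying $(\mathcal{F}g)(\eta)=CR(\xi+i\eta,A)^2y$ is well taken, though the paper simply asserts this identity in Section~\ref{sec:Preliminaries}, so citing it there suffices.
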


\begin{proof}
	Let $x \in D(A)$ be given.
	Using Plancherel's theorem, we obtain
	\begin{align}
	\label{eq:tauTA}
	\int_0^{\infty}
	\|t e^{-\xi t } CT(t) x\|^2 dt
	&=
	\frac{1}{2\pi}
	\int_{-\infty}^{\infty}
	\|C R(\xi +i\eta,A)^2 x\|^2 d\eta
	\end{align}
	for all $\xi  >0$.
	By the resolvent estimate \eqref{eq:F_bound},
	\begin{align*}
	\|CR(\lambda,A)^2x\|^2 &\leq 
	\|CR(\lambda,A)\|^2 \,\|R(\lambda,A)x\|^2 \\
	&\leq 
	F(\re \lambda)^2\|R(\lambda,A)x\|^2
	\end{align*}
	for all $\lambda \in \mathbb{C}_+$.
	Therefore,
	\begin{align}
	\int_{-\infty}^{\infty}
	\|CR(\xi +i\eta,A)^2x\|^2 d\eta 
	\leq 
	F(\xi )^2
	\int_{-\infty}^{\infty}\|R(\xi +i\eta,A)x\|^2 d\eta
	\label{eq:Resol_a}
	\end{align}
	for all $\xi  >0$. Using Plancherel's theorem again, we have 
	\begin{align}
	\int_{-\infty}^{\infty}\|R(\xi +i\eta,A)x\|^2 d\eta
	&=
	2\pi \int_0^{\infty}  \|e^{-\xi t}T(t)x\|^2 dt \notag \\
	&\leq \frac{\pi c^2}{\xi } \|x\|^2
	\label{eq:R_x_bound}
	\end{align}
	for all $\xi  >0$,
	where $c \coloneqq \sup_{t \geq 0}\|T(t)\|$. Combining \eqref{eq:tauTA}--\eqref{eq:R_x_bound},
	we obtain
	\begin{equation}
	\label{eq:teCT_bound}
	\int_0^{\infty}
	\|t  e^{-\xi t } CT(t )x\|^2 dt
	\leq \frac{c^2  F(\xi )^2}{2\xi } \|x\|^2
	\end{equation}
	for all $\xi  >0$.
	Since
	\[
	e^{-2} \|t   CT(t ) x\|^2 \leq 
	\|t e^{-t/\tau}   CT(t ) x\|^2 
	\]
	for $0\leq t \leq \tau$,
	the estimate \eqref{eq:teCT_bound} with 
	$\xi  \coloneqq 1/\tau$
	yields
	\begin{align*}
	\int_0^{\tau}
	\|t CT(t )x\|^2 dt
	&\leq 
	e^2\int_0^{\tau}
	\|t e^{-t/\tau}   CT(t ) x\|^2 dt \\
	&\leq 
	\frac{e^2c^2 }{2} \tau F(1/\tau)^2\|x\|^2 
	\end{align*} 
	for all $\tau >0$.

	Fix $\tau >0$.
	Since
	\[
	CT(\tau )x = 
	\frac{2}{\tau^2}
	\int_0^{\tau} t T(\tau-t) CT(t) x dt,
	\]
	the Cauchy-Schwarz inequality implies that  for all $y \in X$,
	\begin{align*}
	|\langle CT(\tau) x,y  \rangle| 
	&=
	\frac{2}{\tau^{2}}
	\left|
	\left\langle
	\int_0^{\tau} t  T(\tau-t) C T(t ) x dt, y
	\right\rangle
	\right| \\
	& =
	\frac{2}{\tau^{2}}
	\left|
	\int_0^{\tau}
	\left\langle
	t  CT(t) x , T(\tau-t)^*y
	\right \rangle dt
	\right| \\
	&\leq 
	\frac{2}{\tau^{2}}
	\sqrt{
		\int_0^{\tau}   \|t CT(t) x\|^2 dt
	}\,
	\sqrt{
		\int_0^{\tau}  \|T(\tau-t)^*y\|^2 dt
	}  \\
	& \leq 
	\sqrt{2}e c^2 \frac{F(1/\tau)}{\tau} \|x\|\, \|y\|,
	\end{align*}
	where $T(t)^*$ is
	the Hilbert space adjoint of $T(t)$ for $t \geq 0$.
	Since $D(A)$ is dense in $X$, it follows that 
	$CT(\tau)$ extends to
	a bounded linear operator on $X$ with norm at most 
	$\sqrt{2}ec^2 F(1/\tau) /\tau$.
\end{proof}

Let $K >0$ and $1 \leq p <\infty$.
If the function $F$ in Proposition~\ref{prop:WC_to_poly_decay} is defined by
\begin{equation}
\label{eq:F_Weiss_cond}
F(\xi ) \coloneqq \frac{K}{\xi ^{1-1/p}}
\end{equation}
for $\xi  >0$, then
the resolvent estimate \eqref{eq:F_bound} coincides with that in 
the $p$-Weiss condition on $C$.
Since
\[
\frac{F(1/t)}{t} = \frac{K}{t^{1/p}},
\]
Proposition~\ref{prop:WC_to_poly_decay} shows that
if $C$ satisfies the $p$-Weiss condition, then
\[
\|CT(t)\| \leq \frac{M}{t^{1/p}}
\] 
for all $t>0$ and  some $M>0$.

\begin{proof}[Proof of Theorem~\Rref{thm:decay_rate_equivalent}]
	If the statement (i) holds, then
	we see from \cite[Theorem~1.1]{Batty2008}  that $i\mathbb{R} \subset \varrho(A)$,
	and therefore the implication (i) $\Rightarrow $ (ii) is true
	in the case $p=1$ by Theorem~\ref{thm:uniform_bounded}.
	By Proposition~\ref{prop:interpolation}, 
	the statement (i) is true if and only if
	$\|T(t)(-A)^{-\alpha/p}\| = O(t^{-1/p})$ as $t \to \infty$. 
	Hence,
	the implication (i) $\Rightarrow $ (ii)
	in the case $1 < p < \infty$ 
	and the implication (ii) $\Rightarrow $ (i) follow immediately
	from 
	Lemma~\ref{lem:poly_decay_to_WC}  and
	Proposition~\ref{prop:WC_to_poly_decay} with $C \coloneqq (-A)^{-\alpha/p}$,
	respectively.
\end{proof}

\begin{proof}[Proof of Theorem~\Rref{thm:singularity_zero}]
	For each $k \in \{1,2 \}$, if the statement (i) holds, then
	$\sigma(A) \cap i\mathbb{R} = \{ 0\}$
	by \cite[Corollary~6.2]{Batty2016}.
	Hence the implication (i) $\Rightarrow $ (ii) in the case $p=1$ 
	is true for  $k=1$ by
	Corollary~7.5 and Theorem~7.6 of \cite{Batty2016}
	and for $k=2$ by
	Theorem~8.4 and its proof of \cite{Batty2016}.
	Since Proposition~\ref{prop:interpolation} shows that
	the statement (i) is equivalent to
	\[
	\|T(t) (-A)^{\alpha/p}(I-A)^{-k\alpha/p}\| = 
	O\left(\frac{1}{t^{1/p}}
	\right)\qquad \text{as $t \to \infty$}
	\]
	for each $k \in \{1,2 \}$,
	the rest follows from
	Lemma~\ref{lem:poly_decay_to_WC}
	and Proposition~\ref{prop:WC_to_poly_decay} with 
	\[
	C \coloneqq (-A)^{\alpha/p}(I-A)^{-k\alpha/p}
	\]
	as in the proof of Theorem~\ref{thm:decay_rate_equivalent}.
\end{proof}

\subsection{Decay of Banach space semigroups and infinite-time admissibility}
Next, we study the relation
between polynomial decay and $L^p$-infinite-time admissibility.
We have seen in
Proposition~\ref{prop:WC_to_poly_decay} that
the $p$-Weiss condition on $C \in \mathcal{L}(D(A),X)$
implies $\|CT(t)\| = O(t^{-1/p})$
in the Hilbert space setting if $T(t)$ commutes with $C$ for all $t \geq 0$.
We obtain a similar result from
the $L^p$-infinite-time admissibility of $C$, which is a stronger 
property than the $p$-Weiss condition,
in the Banach space setting. 
This follows from the technique used 
in the proof given in \cite{Pritchard1981} for Datko's theorem.
Parts of the next result was proved in a slightly less general form (and with a slightly different proof)
in \cite[Theorem~2.5]{Zwart2012}.
\begin{proposition}
	\label{prop:p_ad_C}
	Let $(T(t))_{t \geq 0}$ be a $C_0$-semigroup on a Banach 
	space $X$ with generator $A$, and let
	$C \in \mathcal{L}(D(A),X)$ be such that $T(t)$ commutes with $C$ for all $t \geq 0$.
	Let $1\leq p < \infty$ and
	assume that $C$ is $L^p$-infinite-time admissible for $A$. Then
	the following statements are true:
	\begin{enumerate}[\em \alph{enumi})]
		\item
		For all $t >0$,
		$CT(t)$ extends to
		a bounded linear operator (also denoted by $CT(t)$) on $X$.
		Moreover,
		for all $t_0 >0$, there exists $M >0$
		such that 
		\[
		\|CT(t)\| \leq M
		\] 
		for all $t \geq t_0$. 
		\item
		If $(T(t))_{t \geq 0}$ is a bounded $C_0$-semigroup,
		then there exists $M >0$
		such that 
		\[
		\|CT(t)\| \leq \frac{M}{t^{1/p}}
		\] 
		for all $t >0$. 
	\end{enumerate}
	
\end{proposition}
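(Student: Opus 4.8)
The plan is to adapt the averaging argument used by Pritchard \cite{Pritchard1981} in the proof of Datko's theorem. The starting point is the identity
\[
CT(t)x = T(t-s)\,CT(s)x, \qquad x \in D(A),\ 0 \le s \le t,
\]
which follows from the semigroup law $T(t) = T(t-s)T(s)$, the invariance $T(s)D(A) \subseteq D(A)$, and the commutation hypothesis $T(r)Cy = CT(r)y$ for $y \in D(A)$ (applied with $y = T(s)x$ and $r = t-s$). I also note that for $x \in D(A)$ the map $t \mapsto CT(t)x$ is continuous: $t \mapsto T(t)x$ is continuous for the graph norm, since $AT(t)x = T(t)Ax$, and $C \in \mathcal{L}(D(A),X)$. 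In particular every integral below is well defined, and, as $C_0$-semigroups are bounded on compact time intervals, $m_\tau \coloneqq \sup_{0 \le r \le \tau}\|T(r)\| < \infty$ for every $\tau > 0$.

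For part a), fix $t_0 > 0$, let $t \ge t_0$, and let $x \in D(A)$. For $s \in [t-t_0, t]$ we have $t - s \in [0,t_0]$, so the identity above gives $\|CT(t)x\| \le \|T(t-s)\|\,\|CT(s)x\| \le m_{t_0}\|CT(s)x\|$; raising to the $p$-th power (the left side being constant in $s$) and integrating over $[t-t_0,t] \subseteq [0,\infty)$ yields
\begin{align*}
t_0\,\|CT(t)x\|^p
&\le m_{t_0}^p \int_{t-t_0}^{t} \|CT(s)x\|^p\,ds \\
&\le m_{t_0}^p \int_0^{\infty} \|CT(s)x\|^p\,ds
\le m_{t_0}^p\,M\,\|x\|^p,
\end{align*}
where $M$ is the admissibility constant in \eqref{eq:inf_time_admissible}. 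Hence $\|CT(t)x\| \le m_{t_0}(M/t_0)^{1/p}\|x\|$ for all $x \in D(A)$, and by density of $D(A)$ in $X$ the operator $CT(t)$ extends to a bounded operator on $X$ satisfying the same bound uniformly for $t \ge t_0$. Taking $t_0 = t$ here already shows that $CT(t)$ extends to $\mathcal{L}(X)$ for every $t > 0$, and then the displayed estimate with a fixed $t_0$ gives the asserted uniform bound on $[t_0,\infty)$.

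For part b), assume $c \coloneqq \sup_{t \ge 0}\|T(t)\| < \infty$. Then for every $t > 0$, $x \in D(A)$, and $s \in [0,t]$ we get $\|CT(t)x\| \le \|T(t-s)\|\,\|CT(s)x\| \le c\,\|CT(s)x\|$; integrating in $s$ over $[0,t]$ gives $t\,\|CT(t)x\|^p \le c^p M\|x\|^p$, so $\|CT(t)x\| \le c\,M^{1/p} t^{-1/p}\|x\|$, and density of $D(A)$ yields $\|CT(t)\| \le c\,M^{1/p}\,t^{-1/p}$ for all $t > 0$.

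There is no substantial obstacle; the two points that need care are (i) justifying the commutation identity for $CT(t)$ on $D(A)$ and the measurability (here, continuity) of $t \mapsto \|CT(t)x\|$, and (ii) in part a), restricting the integration variable $s$ to an interval of length $t_0$ ending at $t$, so that the factor $\|T(t-s)\|$ remains controlled by $m_{t_0}$ even when the semigroup is unbounded — it is precisely this restriction that prevents a direct $t^{-1/p}$ bound in the general (merely $C_0$) case and leaves only the uniform-on-$[t_0,\infty)$ conclusion.
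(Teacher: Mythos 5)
Your proof is correct and follows essentially the same Pritchard--Zabczyk averaging argument as the paper: write $CT(t)x = T(t-s)\,CT(s)x$ for $x \in D(A)$, bound the semigroup factor, integrate in $s$, and invoke the admissibility estimate. The only cosmetic difference is in part a), where you integrate over a window $[t-t_0,t]$ of length $t_0$ using the local bound $\sup_{0\le r\le t_0}\|T(r)\|$, whereas the paper integrates $e^{-pw\tau}\|CT(t)x\|^p$ over all of $[0,t]$ against the exponential growth bound $\|T(\tau)\|\le M_2e^{w\tau}$; both yield the same uniform estimate on $[t_0,\infty)$.
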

\begin{proof}
	By the $L^p$-infinite-time admissibility of $C$, there exists
	a constant $M_1 >0$ such that 
	\[
	\int_0^\infty
	\|C T(\tau)x\|^p d\tau \leq M_1^p \|x\|^p
	\] 
	for all $x \in D(A)$.
	
	a) There exists $M_2 \geq 1$ and $w >0$ such that 
	\[
	\|T(t)\| \leq M_2 e^{w t}
	\]
	for all $t \geq 0$.
	We have
	\begin{align*}
	\int^t_0 e^{-pw \tau } d\tau \|CT(t)x\|^p
	&\leq
	\int^t_0 e^{-pw \tau } \|T(\tau) CT(t-\tau)x\|^p d\tau \\
	&\leq 
	M_2^p\int^t_0 \|CT(t-\tau)x\|^p d\tau \\
	&\leq M_1^p M_2^p \|x\|^p
	\end{align*}
	for all $t \geq 0$ and $x \in D(A)$. 
	Therefore,
	\[
	\|CT(t)x\| \leq \left(
	\frac{pw}{1-e^{-pwt}}
	\right)^{1/p}M_1M_2 \|x\|
	\]
	for all $t >0$ and $x \in D(A)$.
	For all $t >0$, the density of $D(A)$ implies that
	$CT(t)$ extends to
	a bounded linear operator on $X$, 
	again denoted by $CT(t)$. We also have
	\[
	\|CT(t)\| \leq \left(
	\frac{pw}{1-e^{-pwt_0}}
	\right)^{1/p}M_1M_2
	\]
	for all $t \geq  t_0 >0$.
	
	b)
	By assumption, we have
	\[
	c \coloneqq \sup_{t \geq 0}
	\|T(t)\|< \infty.
	\]
	For all $t \geq 0$ and $x \in D(A)$,
	\begin{align*}
	t\|CT(t) x\|^p
	&=
	\int^t_0 
	\|T(t-\tau)C T(\tau) x\|^p d\tau \\
	&\leq 
	c^p\int^\infty_0 
	\|C T(\tau)  x\|^p d\tau \\
	&\leq c^p M_1^p \|x\|^p.
	\end{align*}
	From the density of $D(A)$, it follows that 
	$CT(t)$ satisfies
	\[
	\|CT(t)\| \leq \frac{cM_1}{t^{1/p}}
	\]
	for all $t > 0$.
\end{proof}

We obtain the following proposition from a simple argument.
\begin{proposition}
	\label{prop:resolvent_admissible_decay}
	Let $(T(t))_{t \geq 0}$ be a bounded $C_0$-semigroup on a Banach 
	space $X$ with generator $A$ such that  $0 \in \varrho(A)$. 
	For fixed $\alpha >0$ and $1\leq p < \infty$,
	the following statements are true:
	\begin{enumerate}[\em \alph{enumi})]
		\item
		If $(-A)^{-\alpha/p}$ is $L^p$-infinite-time admissible for $A$, then
		$\|T(t)A^{-1}\| = 
		O(t^{-1/\alpha})
		$ as $t \to \infty$.
		\item
		If 
		$\|T(t) A^{-1}\| = 
		O(t^{-1/\alpha})
		$ as $t \to \infty$,
		then $(- A)^{-\alpha/p-\varepsilon}$ 
		with $\varepsilon > 0$ is $L^p$-infinite-time admissible  for $A$.
	\end{enumerate}
\end{proposition}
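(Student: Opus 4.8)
The plan is to deduce both implications from the interpolation property in Proposition~\ref{prop:interpolation}, applied with the bounded sectorial operator $B \coloneqq (-A)^{-1}$, supplemented by Proposition~\ref{prop:p_ad_C} for part a). The hypothesis $0 \in \varrho(A)$ makes $(-A)^{-1} = -A^{-1}$ a bounded sectorial operator commuting with $T(t)$ for all $t \geq 0$, and $\|T(t)A^{-1}\| = \|T(t)(-A)^{-1}\|$; thus Proposition~\ref{prop:interpolation} applies verbatim to $B^\beta = (-A)^{-\beta}$ for any $\beta > 0$.

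For part a), I would first invoke Proposition~\ref{prop:p_ad_C}b): since $(T(t))_{t\geq 0}$ is bounded and $C \coloneqq (-A)^{-\alpha/p}$ is $L^p$-infinite-time admissible and commutes with $T(t)$, there is $M>0$ with $\|T(t)(-A)^{-\alpha/p}\| \leq M t^{-1/p}$ for all $t>0$, hence $\|T(t)(-A)^{-\alpha/p}\| = O(t^{-1/p})$. Then I would apply Proposition~\ref{prop:interpolation} with $B = (-A)^{-1}$, exponent $\beta = \alpha/p$, and the proposition's parameter ``$\alpha$'' set equal to our $\alpha$: its statement (ii) reads $\|T(t)B^{\alpha/p}\| = O(t^{-(\alpha/p)/\alpha}) = O(t^{-1/p})$, which is exactly what we have, so its statement (i), namely $\|T(t)(-A)^{-1}\| = O(t^{-1/\alpha})$, follows; rewriting the norm as $\|T(t)A^{-1}\|$ gives the claim.

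For part b), I would run the interpolation in the other direction. From $\|T(t)A^{-1}\| = O(t^{-1/\alpha})$ and Proposition~\ref{prop:interpolation} with $B = (-A)^{-1}$ and $\beta = \alpha/p + \varepsilon$ (again with the proposition's ``$\alpha$'' equal to ours), statement~(i) holds, hence so does statement~(ii): $\|T(t)(-A)^{-\alpha/p-\varepsilon}\| = O(t^{-(\alpha/p+\varepsilon)/\alpha}) = O(t^{-1/p-\varepsilon/\alpha})$. Fix $t_0>0$ and $C_1>0$ with $\|T(t)(-A)^{-\alpha/p-\varepsilon}\| \leq C_1 t^{-1/p-\varepsilon/\alpha}$ for $t \geq t_0$, and put $C_0 \coloneqq \|(-A)^{-\alpha/p-\varepsilon}\|\sup_{t\geq 0}\|T(t)\| < \infty$, so $\|T(t)(-A)^{-\alpha/p-\varepsilon}\| \leq C_0$ for $0 \leq t \leq t_0$. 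Splitting the integral at $t_0$, for every $x \in D(A)$ (indeed every $x \in X$),
\[
\int_0^\infty \big\|(-A)^{-\alpha/p-\varepsilon}T(t)x\big\|^p\,dt \;\leq\; \Big(C_0^p\, t_0 + C_1^p \int_{t_0}^\infty t^{-1-p\varepsilon/\alpha}\,dt\Big)\|x\|^p,
\]
and the tail integral is finite precisely because $p\varepsilon/\alpha > 0$, i.e. the exponent $-1-p\varepsilon/\alpha$ is strictly less than $-1$; this is the $L^p$-infinite-time admissibility of $(-A)^{-\alpha/p-\varepsilon}$.

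There is no real obstacle: the argument is bookkeeping with Propositions~\ref{prop:interpolation} and~\ref{prop:p_ad_C}. The one point worth flagging is why $\varepsilon>0$ cannot be removed from part b) by this route: the borderline exponent $\beta = \alpha/p$ yields only $\|T(t)(-A)^{-\alpha/p}\| = O(t^{-1/p})$, whose $p$-th power $t^{-1}$ is not integrable near $t=\infty$, so the tail estimate collapses. The case $\varepsilon = 0$ would require a genuinely different argument and, as the introduction indicates, is not known for general bounded $C_0$-semigroups.
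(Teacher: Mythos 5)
Your proposal is correct and follows essentially the same route as the paper: part a) via Proposition~\ref{prop:p_ad_C}.b) with $C\coloneqq(-A)^{-\alpha/p}$ followed by Proposition~\ref{prop:interpolation}, and part b) via Proposition~\ref{prop:interpolation} with exponent $\beta=\alpha/p+\varepsilon$ followed by direct integration of the resulting decay estimate. The only cosmetic difference is that the paper absorbs the behaviour near $t=0$ into a single bound $M(1+t)^{-\beta/\alpha}$ instead of splitting the integral at $t_0$.
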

\begin{proof}
	a) This follows immediately from Proposition~\ref{prop:p_ad_C}.b) with
	$C \coloneqq (-A)^{-\alpha/p}$ and
	Proposition~\ref{prop:interpolation}.
	
	b) Let $\beta > \alpha /p$. By Proposition~\ref{prop:interpolation}, 
	there exists a constant $M>0$
	such that
	\[
	\|T(t) (-A)^{-\beta}\|  \leq \frac{M}{(1+t)^{\beta/\alpha}}
	\]
	for all $t \geq 0$. Since $p\beta/\alpha > 1$, it follows that 
	for all $x \in X$,
	\begin{align*}
	\int_0^{\infty} 
	\|(-A)^{-\beta} T(t)x \|^p dt
	&\leq
	\int_0^{\infty} 
	\frac{M^p \|x\|^p }{(1+t)^{p\beta/\alpha}} dt \\
	&\leq 
	\frac{M^p \|x\|^p}{p\beta/\alpha - 1}.
	\end{align*}
	Hence $(-A)^{-\beta}$ is infinite-time admissible for $A$.
\end{proof}

It remains open whether $\|T(t)A^{-1}\| = O(t^{-1/\alpha})$ implies the $L^p$-infinite-time
admissibility of $(-A)^{\alpha/p}$ for all bounded $C_0$-semigroups 
$(T(t))_{t \geq 0}$ on
Banach spaces. However,
it is true
for multiplication $C_0$-semigroups on $L^q$-spaces with $1 \leq q \leq p < \infty$.
\begin{theorem}
	\label{thm:Lp_ad}
	Let $\mu$
	be a $\sigma$-finite regular Borel measure on a locally compact Hausdorff space  $\Omega$.
	Let
	$\phi \colon \Omega \to \mathbb{C}$ be measurable with essential range 
	$\phi_{\mathrm{ess}}(\Omega)$
	in $\mathbb{C}_-$.
	Assume that $A$ is the multiplication operator induced by $\phi$ on $L^q(\Omega,\mu)$ for a fixed $1 \leq q < \infty$, i.e., $Af = \phi f$
	with domain $D(A) \coloneqq \{f \in L^q(\Omega,\mu): \phi f \in L^q(\Omega,\mu)  \}$,
	and let $(T(t))_{t\geq 0}$ be the $C_0$-semigroup on $L^q(\Omega,\mu)$ generated by $A$.  Then
	the following statements are equivalent for fixed $\alpha>0$ and $p \in [q,\infty)$:
	\begin{enumerate}[\em (\roman{enumi})]
		\item $\|T(t)A^{-1}\| = O(t^{-1/\alpha})$ as $t \to \infty$.
		\item $(-A)^{-\alpha/p}$ is $L^p$-infinite-time admissible for $A$.
	\end{enumerate}
\end{theorem}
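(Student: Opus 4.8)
Throughout, write $X=L^q(\Omega,\mu)$. Since the spectrum of the multiplication operator $A$ equals the essential range of $\phi$, the hypothesis forces $\sigma(A)$ to be a closed subset of $\mathbb{C}_-$, whence $0\in\varrho(A)$; consequently $A^{-1}$, being multiplication by $1/\phi$, is bounded, so $\delta:=\operatorname*{ess\,inf}_{\Omega}|\phi|>0$, and $\re\phi<0$ $\mu$-a.e. Hence $(T(t))_{t\ge 0}$, which acts by multiplication with $e^{t\phi}$, is a contraction semigroup, and the implication (ii)$\Rightarrow$(i) is exactly Proposition~\ref{prop:resolvent_admissible_decay}.a). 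The plan for (i)$\Rightarrow$(ii) is to reduce everything to pointwise conditions on $\phi$: since $T(t)A^{-1}$ and $(-A)^{-\alpha/p}T(t)$ are multiplication by $e^{t\phi}/\phi$ and $(-\phi)^{-\alpha/p}e^{t\phi}$, with $|(-\phi)^{-\alpha/p}|=|\phi|^{-\alpha/p}$, the relevant operator norms are $L^\infty$-norms of multipliers built from $|\phi|$ and $\re\phi$ alone. The crux is the claim that (i) is equivalent to
\[
\kappa:=\operatorname*{ess\,inf}_{\omega\in\Omega}\bigl(|\phi(\omega)|^{\alpha}\,|\re\phi(\omega)|\bigr)>0 .
\]

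To prove this claim I would argue with a single real variable. Writing $r=|\phi(\omega)|\ge\delta$ and $s=-\re\phi(\omega)>0$, one has $\|T(t)A^{-1}\|=\operatorname*{ess\,sup}_{\omega}e^{-ts}/r$. If $\kappa>0$, then $s\ge\kappa/r^{\alpha}$, so $e^{-ts}/r\le u^{-1/\alpha}e^{-t\kappa/u}$ with $u=r^{\alpha}$; maximizing the right-hand side over $u>0$ (the maximum is attained at $u=\alpha t\kappa$) yields $\|T(t)A^{-1}\|\le(\alpha\kappa e)^{-1/\alpha}\,t^{-1/\alpha}$ for every $t>0$, which is (i). Conversely, assuming (i), say $\|T(t)A^{-1}\|\le C t^{-1/\alpha}$ for all $t\ge t_0$, one gets $e^{-ts}/r\le C t^{-1/\alpha}$ for a.e.\ $\omega$ and all $t\ge t_0$; taking $t=\max\{t_0,1/s\}$ and using $r\ge\delta$ gives $r^{\alpha}s\ge\min\{(eC)^{-\alpha},\delta^{\alpha}/t_0\}>0$, hence $\kappa>0$.

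It then remains to deduce (ii) from $\kappa>0$. For $f\in L^q(\Omega,\mu)$ set $d\nu:=|f|^q\,d\mu$, a finite measure, and compute
\[
\int_0^{\infty}\bigl\|(-A)^{-\alpha/p}T(t)f\bigr\|_{L^q}^p\,dt
=\int_0^{\infty}\left(\int_{\Omega}|\phi|^{-\alpha q/p}\,e^{qt\re\phi}\,d\nu\right)^{p/q}dt .
\]
Since $p\ge q$, Minkowski's inequality for integrals allows the $L^{p/q}(dt)$-norm to be moved inside the $\nu$-integral; the inner time-integral then equals $\int_0^{\infty}|\phi|^{-\alpha}e^{pt\re\phi}\,dt=|\phi|^{-\alpha}/(p\,|\re\phi|)\le(p\kappa)^{-1}$ $\nu$-a.e., and one obtains $\int_0^{\infty}\|(-A)^{-\alpha/p}T(t)f\|_{L^q}^p\,dt\le(p\kappa)^{-1}\|f\|_{L^q}^p$ for all $f$, that is, $(-A)^{-\alpha/p}$ is $L^p$-infinite-time admissible for $A$. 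The main point — and the only place where the assumption $q\le p$ is used — is precisely this interchange of the time- and space-integrations via Minkowski's inequality; once the pointwise criterion $\kappa>0$ has been extracted, all remaining estimates are elementary single-variable computations.
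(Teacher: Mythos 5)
Your proof is correct, and it reaches the same final estimate as the paper, but by a noticeably more self-contained route in the one step that matters. For the admissibility bound itself the two arguments are essentially interchangeable: the paper applies Jensen's inequality to the probability measure $|f|^q\,d\mu/\|f\|^q$ and then Fubini, while you apply Minkowski's integral inequality to swap the $L^{p/q}(dt)$-norm with the $\nu$-integral; both hinge on $p\ge q$ and yield the same constant $(p\kappa)^{-1}$. The genuine difference is how the pointwise bound $\operatorname*{ess\,inf}\bigl(|\phi|^{\alpha}|\re\phi|\bigr)>0$ is extracted from (i). The paper gets it by first invoking Proposition~\ref{prop:interpolation} to pass to $\|T(t)A^{-\alpha}\|=O(t^{-1})$ and then citing \cite[Proposition~4.2]{Batkai2006} for the resulting spectral inclusion $1/|\re\lambda|\le\Upsilon|\im\lambda|^{\alpha}$ near $i\mathbb{R}$, together with $\sigma(A)=\phi_{\mathrm{ess}}(\Omega)$. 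You instead exploit the fact that for a multiplication semigroup the operator norm is an essential supremum, and derive the pointwise condition $\kappa>0$ directly from $e^{-ts}/r\le Ct^{-1/\alpha}$ by a one-variable optimization (choosing $t=\max\{t_0,1/s\}$, with $\operatorname*{ess\,inf}|\phi|>0$ handling the regime $s>1/t_0$); your converse direction, showing $\kappa>0$ implies (i) with the explicit constant $(\alpha\kappa e)^{-1/\alpha}$, is not needed for the theorem but makes the equivalence transparent. What your approach buys is independence from the cited external result and an explicit characterization of (i) in terms of $\phi$; what the paper's buys is a template that generalizes beyond multiplication operators. The only point worth a sentence in a write-up is the routine passage from ``for each $t$, a.e.\ $\omega$'' to ``a.e.\ $\omega$, for all $t\ge t_0$'' (take rational $t$ and use continuity in $t$), since you evaluate the estimate at a $t$ depending on $\omega$.
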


\begin{proof}
	We first note that $\phi(z) \in \phi_{\mathrm{ess}}(\Omega)$
	for almost all $z \in \Omega$; see, e.g., Exercise~19 of Chapter~VII of \cite{Lang1993}.
	Since $\phi_{\mathrm{ess}}(\Omega) \subset \mathbb{C}_-$ by assumption,
	it follows that $(T(t))_{t\geq 0}$ is a bounded $C_0$-semigroup.
	The implication (ii) $\Rightarrow$ (i) has been shown in Proposition~\ref{prop:resolvent_admissible_decay}. Therefore,
	we here prove the implication (i) $\Rightarrow$ (ii).

	For all $t \geq 0$ and $f \in L^q(\Omega,\mu)$,
	\begin{align}
	\|T(t) (-A)^{-\alpha/p} f\|^p &=
	\|e^{t \phi} (-\phi)^{-\alpha/p} f\|^p \notag \\
	&=
	\left(
	\int_{\Omega}
	\frac{e^{qt \re \phi(z)}}{|\phi(z)|^{q \alpha /p}}
	|f(z)|^q d\mu(z)
	\right)^{p/q}.
	\label{eq:Lp1}
	\end{align}
	Since $p \geq q$, 
	Jensen's inequality (see, e.g., Theorem~3.3 of 
	\cite{Rudin1987}) implies that
	\begin{align}
	\left(
	\int_{\Omega}
	\frac{e^{qt \re \phi(z)}}{|\phi(z)|^{q \alpha /p}}
	\frac{|f(z)|^q}{\|f\|^q} d\mu(z)
	\right)^{p/q} &\leq 
	\int_{\Omega}
	\left(
	\frac{e^{qt \re \phi(z)}}{|\phi(z)|^{q \alpha /p}}
	\right)^{p/q}
	\frac{|f(z)|^q}{\|f\|^q} d\mu(z) \notag \\
	&=
	\int_{\Omega}
	\frac{e^{pt \re \phi(z)}}{|\phi(z)|^{\alpha }}
	\frac{|f(z)|^q}{\|f\|^q} d\mu(z).
	\label{eq:Lp2}
	\end{align}

	By the assumption $\phi_{\mathrm{ess}}(\Omega) \subset \mathbb{C}_-$, we obtain
	\begin{align}
	\label{eq:int_multiplication}
	\int^{\infty}_0 
	\frac{e^{pt \re \phi(z)}}{|\phi(z)|^{\alpha }} dt
	&=
	\frac{1}{p  |\re \phi(z)|\, |\phi(z)|^{\alpha}}
	\end{align}
	for almost all $z \in \Omega$.
	Since $\|T(t)A^{-\alpha}\| = O(t^{-1})$ as $t \to \infty$ by
	Proposition~\ref{prop:interpolation},
	it follows from \cite[Proposition~4.2]{Batkai2006} that
	there exist $\Upsilon,\kappa >0$ such that 
	\[
	\frac{1}{|\re \lambda|} \leq \Upsilon  |\im \lambda|^{\alpha}
	\]
	for all $\lambda \in \sigma(A)$ 
	satisfying $|\re \lambda| \leq \kappa$.
	Therefore, 
	if $\lambda \in \sigma(A)$ satisfies $|\re \lambda| \leq \kappa$, then
	\[
	\frac{1 }{ |\re \lambda|\, |\lambda|^\alpha } \leq 
	\frac{\Upsilon  |\im \lambda|^{\alpha} }{|\lambda|^\alpha} \leq \Upsilon .
	\]
	If $\lambda \in \sigma(A)$ satisfies  $|\re \lambda| > \kappa$, then
	\[
	\frac{1 }{ |\re \lambda|\, |\lambda|^\alpha } \leq \frac{1}{\kappa^{1+\alpha}}.
	\]
	Define
	\[
	M \coloneqq \max\left\{
	\Upsilon,\, \frac{1}{\kappa^{1+\alpha}}
	\right\}.
	\]
	Since 
	$\phi_{\mathrm{ess}}(\Omega) = \sigma(A)$ by \cite[Proposition~I.4.10]{Engel2000},
	we have that 
	$\phi(z) \in \sigma(A)$ for almost all $z \in \Omega$.
	Hence
	\begin{equation*}
	\frac{1}{|\re \phi(z)|\, |\phi(z)|^{\alpha}} \leq M
	\end{equation*}
	for almost all $z \in \Omega$.
	This estimate and \eqref{eq:int_multiplication} yield
	\begin{equation}
	\label{eq:Lp3}
	\int_{\Omega} \int_0^{\infty}
	\frac{e^{pt \re \phi(z)}}{|\phi(z)|^{\alpha }}
	\frac{|f(z)|^q}{\|f\|^q} dt d\mu(z)
	\leq 
	\frac{M}{p}
	\int_{\Omega}\frac{|f(z)|^q}{\|f\|^q} d\mu(z)
	=\frac{M}{p}.
	\end{equation}
	
	Combining Fubini's theorem with \eqref{eq:Lp1}, \eqref{eq:Lp2}, and 
	\eqref{eq:Lp3}, we derive
	\begin{align*}
	\int^\infty_0 \| T(t)(-A)^{-\alpha/p} f \|^p dt 
	&\leq \frac{M}{p} \|f\|^p.
	\end{align*}
	Thus, $(-A)^{-\alpha/p}$ is $L^p$-infinite-time admissible for $A$.
\end{proof}

\subsection{Example}
We present a simple example, for which 
the estimates for $\|T(t)A^{-1}\|$ in the statement (i) of 
Theorems~\ref{thm:decay_rate_equivalent}~and~\ref{thm:Lp_ad} 
are optimal in the sense that 
the rate of decay cannot be improved.

Let $X = \ell^2(\mathbb{N})$ and 
define an operator $A\colon D(A) \subset X \to X$ by
\[
Ax \coloneqq 
\left(
\left(
-\frac{1}{n} + i n
\right) x_n
\right)_{n \in \mathbb{N}}
\]
for 
\[
x = (x_n)_{n \in \mathbb{N}} \in D(A) \coloneqq
\{
\zeta = (\zeta_n)_{n \in \mathbb{N}} \in \ell^2(\mathbb{N}) : (n\zeta_n)\in \ell^2(\mathbb{N})
\}.
\]
The semigroup $(T(t))_{t \geq 0}$ generated by $A$ is given by
\[
T(t)x = \left(
e^{\left(
	-\frac{1}{n} + i n
	\right)t} x_n
\right)_{n \in \mathbb{N}}
\]
for all $t \geq 0$ and $x = (x_n)_{n \in \mathbb{N}} \in \ell^2(\mathbb{N})$.
Let $\alpha >0$.

\subsubsection{Weiss condition}
For all $\lambda \in \mathbb{C}_+$,
\begin{equation}
\label{eq:weiss_ex1}
\sqrt{\re \lambda } \, \|(-A)^{-\alpha} R(\lambda,A)\| = 
\sup_{n \in \mathbb{N}} \frac{\sqrt{\re \lambda}}{
	|1/n-in|^\alpha \, |\lambda + 1/n-in| }.
\end{equation}
Moreover,
\begin{equation}
\label{eq:weiss_ex_bound}
\frac{\sqrt{\re \lambda}}{
	|1/n-in|^\alpha\, |\lambda + 1/n-in| }
\leq 
\frac{\sqrt{\re \lambda}}{n^{\alpha} (\re \lambda + 1/n )}
\end{equation}
for all $\lambda \in \mathbb{C}_+$ and $n \in \mathbb{N}$.
Fix $c > 0$, and
define
\[
f(\xi ) \coloneqq \frac{\sqrt{\xi }}{\xi +c}
\]
for $\xi >0$. Then
\[
f'(\xi ) = \frac{c-\xi }{2 \sqrt{\xi }\, (\xi +c)^2}.
\]
Therefore, $f(\xi) \leq f(c) = 1/(2\sqrt{c})$ for all $\xi >0$. Combining this with the estimate 
\eqref{eq:weiss_ex_bound}, we obtain
\begin{equation}
\label{eq:weiss_ex2}
\frac{\sqrt{\re \lambda}}{
	|1/n-in|^\alpha\, |\lambda + 1/n-in| } \leq 
\frac{n^{1/2-\alpha}}{2}
\end{equation}
for all $\lambda \in \mathbb{C}_+$ and $n \in \mathbb{N}$.
On the other hand,
since
\[
|1/n-in|^\alpha \leq 2^{\alpha / 2}n^\alpha,
\]
it follows that 
for each $n \in \mathbb{N}$, the case
$\re \lambda = 1/n$ and $\im \lambda = n$ gives
\begin{equation}
\label{eq:weiss_ex3}
\frac{\sqrt{\re \lambda}}{
	|1/n-in|^\alpha \, |\lambda + 1/n-in| }
\geq \frac{n^{1/2-\alpha}}{2^{1+\alpha/2}}.
\end{equation}
Substituting
the estimates \eqref{eq:weiss_ex2} and \eqref{eq:weiss_ex3} into \eqref{eq:weiss_ex1},
we see that
$(-A)^{-\alpha}$ satisfies the $2$-Weiss condition for $A$ if and only if
$\alpha \geq 1/2$.

\subsubsection{Infinite-time admissibility}
We directly obtain a similar result on $L^2$-infinite-time admissibility.
Indeed,
using the monotone convergence theorem,
we have that 
for all 
$x = (x_n)_{n \in \mathbb{N}} \in \ell^2(\mathbb{N})$,
\begin{align*}
\int^{\infty}_0
\|(-A)^{-\alpha}T(t) x\|^2 dt &=
\int^{\infty}_0
\sum_{n=1}^\infty
\left|
\left(
-\frac{1}{n} + in
\right)^{-\alpha}
e^{
	\left(
	-\frac{1}{n} + in
	\right)t
}x_n
\right|^2 dt \\
&=
\frac{1}{2}
\sum_{n=1}^\infty
\frac{n^{1+2\alpha}}{(1+n^4)^{\alpha}} |x_n|^2.
\end{align*}
Therefore,
$(-A)^{-\alpha}$ is $L^2$-infinite-time admissible for $A$ if and only if
$\alpha \geq 1/2$.

\subsubsection{Polynomial decay rate}
Fix $c>0$. Then
$g(t) \coloneqq t{e^{-t/c}} \leq g(c) = ce^{-1}$ for $t \geq 0$. From this inequality, 
we obtain 
\[
t\|T(t)A^{-1}\| = 
\sup_{n \in \mathbb{N}} \frac{nte^{-\frac{t}{n}}}{\sqrt{1+n^4}} 
\leq \sup_{n \in \mathbb{N}} \frac{e^{-1}n^2}{\sqrt{1+n^4}} \leq e^{-1}
\]
for all $t \geq 0$.
Therefore, $\|T(t)A^{-1}\| = O(1/t)$ as $t \to \infty$. 
We also have 
\[
n\|T(n)A^{-1}\| \geq \frac{e^{-1} n^2}{\sqrt{1+n^4}} 
\] 
for all $n \in \mathbb{N}$, and hence
\[
\liminf_{t \to \infty} t\|T(t)A^{-1}\| \geq e^{-1}.
\]
This demonstrates the optimality of the rates of decay when the statement
(ii)  of Theorem~\ref{thm:decay_rate_equivalent}~or~\ref{thm:Lp_ad}
holds for $p=2$.

\section{Non-polynomial decay of Hilbert space semigroups}
\label{sec:non_polynomial}
Let $X$ and $Y$ be Hilbert spaces, and
let $(T(t))_{t \geq 0}$ be a bounded $C_0$-semigroup on $X$
with generator $A$.
By modifying \cite[Theorem~4.3]{Zwart2005} slightly, we have that
if $C \in \mathcal{L}(D(A),Y)$ satisfies
$\sup_{\eta \in \mathbb{R}} \|CR(1+i\eta,A)\| < \infty$ and
\begin{equation}
\label{eq:resol_cond_poly_log}
\|CR(\lambda,A)\| \leq \frac{K}{\sqrt{\re \lambda}\, |\log \re \lambda |}
\end{equation}
for all $\lambda \in \mathbb{C}_+$ with $\re \lambda \not= 1$ and some $K>0$, then
$C$
is $L^2$-infinite-time admissible for $A$. 
We see from Proposition~\ref{prop:WC_to_poly_decay} 
that
if $Y = X$ and if $T(t)$ commutes with $C$ for all $t \geq 0$, then 
the estimate \eqref{eq:resol_cond_poly_log} 
implies
\begin{equation}
\label{eq:orbit_poly_log}
\|CT(t)\| \leq \frac{M}{\sqrt{t}\, |\log t|}
\end{equation}
for all $t>0$ with $t \not= 1$ and some $M>0$.
Note that the estimate \eqref{eq:resol_cond_poly_log}
contains the condition on
the decay rate of 
$\|CR(\lambda,A)\|$ as
$\re \lambda \to \infty$
in addition to the growth rate of $\|CR(\lambda,A)\|$ as
$\re \lambda \to 0+$, 
since $C$ may not belong to $\mathcal{L}(X,Y)$.
Similarly, the estimate \eqref{eq:orbit_poly_log}  includes the condition on
the growth rate of 
$\|CT(t)\|$
as $t \to 0+$.
In this section, we assume that $C \in \mathcal{L}(X,Y)$, and then show how the decay
estimate $\|CT(t)\| = O(t^{-\beta} (\log t)^{-\gamma})$ as $t\to \infty$ can be transferred to
the growth estimate of $\|CR(\lambda,A)\|$ as $\re \lambda \to 0+$.

\begin{proposition}
	\label{prop:t_logt}
	Let $X$ and $Y$ be Banach spaces, and
	let $(T(t))_{t \geq 0}$ be a bounded $C_0$-semigroup on $X$
	with generator $A$. 
	For fixed $0 \leq \beta < 1$ and $\gamma \geq 0$, 
	the following statements hold.
	\begin{enumerate}[\em \alph{enumi})]
		\item
		If
		$C \in \mathcal{L}(X,Y)$
		satisfies
		\[
		\|CT(t)\| = O \left(
		\frac{1}{t^\beta (\log t)^\gamma}
		\right)
		\]
		as $t \to \infty$,
		then there exists $K >0$ such that
		\begin{equation*}
		\|CR(\lambda,A)\| \leq
		\frac{K}{(\re \lambda)^{1-\beta} |\log \re \lambda|^\gamma}
		\end{equation*}
		for all $\lambda \in \mathbb{C}_+$ with $\re \lambda < 1$.
		\item
		If
		$C \in \mathcal{L}(X,Y)$
		satisfies
		\[
		\|CT(t)\| = O \left(
		\frac{1}{t (\log t)^\gamma}
		\right)
		\]
		as $t \to \infty$,
		then, for all $\xi_0 \in (0,e^{-1})$, 
		there exists $K >0$ such that
		\[
		\|CR(\lambda,A)\| \leq K F_\gamma(\re \lambda)
		\]
		for all $\lambda \in \mathbb{C}_+$ with $\re \lambda < \xi_0$, where
		\begin{equation}
		\label{eq:F_gamma_def}
		F_\gamma(\xi ) \coloneqq 
		\begin{cases}
		|\log \xi |^{1-\gamma}, & 0 \leq \gamma < 1, \\
		\log|\log \xi |, & \gamma = 1,\\
		1, & \gamma > 1
		\end{cases}
		\end{equation}
		for $0 < \xi  < e^{-1}$.
	\end{enumerate}
\end{proposition}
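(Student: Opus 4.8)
The plan is to estimate the resolvent through the Laplace transform representation of the semigroup. For $x\in X$ and $\lambda\in\mathbb{C}_+$ one has $R(\lambda,A)x=\int_0^\infty e^{-\lambda t}T(t)x\,dt$, and since $C\in\mathcal{L}(X,Y)$ it commutes with the (Bochner) integral, so $CR(\lambda,A)x=\int_0^\infty e^{-\lambda t}CT(t)x\,dt$ and hence
\[
\|CR(\lambda,A)\|\leq\int_0^\infty e^{-\xi t}\|CT(t)\|\,dt=:g(\xi),\qquad \xi:=\re\lambda .
\]
The function $g$ is finite and continuous on $(0,\infty)$ because $\|CT(t)\|\leq\|C\|\sup_{s\geq0}\|T(s)\|$, hence it is bounded on compact subsets of $(0,\infty)$; since in each case the claimed majorant (namely $(\re\lambda)^{\beta-1}|\log\re\lambda|^{-\gamma}$ in a), and $F_\gamma(\re\lambda)$ in b)) is bounded below by a positive constant on the relevant range away from $0$, it suffices to prove the stated bounds for $\xi$ in a right neighbourhood of $0$. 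Fix $t_0\geq e$ large enough that $\|CT(t)\|\leq Mt^{-\beta}(\log t)^{-\gamma}$ for $t\geq t_0$ (with $\beta=1$ in b)), and for $\xi<1/t_0$ decompose $g(\xi)=\int_0^{t_0}+\int_{t_0}^{1/\xi}+\int_{1/\xi}^\infty$; the first piece is a constant, and the task reduces to bounding the middle and tail integrals by a multiple of the majorant.

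For part a), in the tail $\int_{1/\xi}^\infty e^{-\xi t}Mt^{-\beta}(\log t)^{-\gamma}\,dt$ one uses $(\log t)^{-\gamma}\leq|\log\xi|^{-\gamma}$ for $t\geq1/\xi$ (monotonicity, $\gamma\geq0$) and the substitution $s=\xi t$, obtaining $M|\log\xi|^{-\gamma}\xi^{-(1-\beta)}\int_1^\infty e^{-s}s^{-\beta}\,ds$, which is of the required form. For the middle piece one bounds $e^{-\xi t}\leq1$ and substitutes $u=\log t$ to turn $\int_{t_0}^{1/\xi}t^{-\beta}(\log t)^{-\gamma}\,dt$ into $\int_{\log t_0}^{|\log\xi|}e^{(1-\beta)u}u^{-\gamma}\,du$. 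Since $1-\beta>0$, integration by parts gives
\[
\int_{a}^{L}e^{(1-\beta)u}u^{-\gamma}\,du=\frac{e^{(1-\beta)L}L^{-\gamma}-e^{(1-\beta)a}a^{-\gamma}}{1-\beta}+\frac{\gamma}{1-\beta}\int_a^L e^{(1-\beta)u}u^{-\gamma-1}\,du ,
\]
and $u^{-\gamma-1}\leq a^{-1}u^{-\gamma}$ on $[a,L]$ lets one absorb the remaining integral provided $a=\log t_0>\gamma/(1-\beta)$, which we arrange by enlarging $t_0$; with $L=|\log\xi|$ (so $e^{(1-\beta)L}=\xi^{-(1-\beta)}$) this yields $\int_{t_0}^{1/\xi}t^{-\beta}(\log t)^{-\gamma}\,dt\leq C\,\xi^{-(1-\beta)}|\log\xi|^{-\gamma}$. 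This integration-by-parts step — showing the integral concentrates near its upper endpoint $u=|\log\xi|$, where $u^{-\gamma}\approx|\log\xi|^{-\gamma}$ — is the main point; a crude bound replacing $(\log t)^{-\gamma}$ by its supremum over $[t_0,1/\xi]$ would destroy the logarithmic gain.

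For part b) the same splitting is used with $\beta=1$. The tail $\int_{1/\xi}^\infty e^{-\xi t}Mt^{-1}(\log t)^{-\gamma}\,dt$ is handled by $t^{-1}\leq\xi$ and $(\log t)^{-\gamma}\leq|\log\xi|^{-\gamma}$ on $t\geq1/\xi$, giving $\leq Me^{-1}|\log\xi|^{-\gamma}$, which is $\leq Me^{-1}F_\gamma(\xi)$ for $\xi<e^{-1}$ when $\gamma\leq1$ (since $|\log\xi|\geq1$) and bounded when $\gamma>1$. For the middle piece, $e^{-\xi t}\leq1$ and $u=\log t$ reduce $\int_{t_0}^{1/\xi}t^{-1}(\log t)^{-\gamma}\,dt$ to the elementary integral $\int_{\log t_0}^{|\log\xi|}u^{-\gamma}\,du$, equal to $\tfrac{1}{1-\gamma}\bigl(|\log\xi|^{1-\gamma}-(\log t_0)^{1-\gamma}\bigr)$ for $\gamma\neq1$ and $\log|\log\xi|-\log\log t_0$ for $\gamma=1$; in each case this is $\leq C\,F_\gamma(\xi)$, while for $\gamma>1$ one instead uses $\int_{t_0}^\infty t^{-1}(\log t)^{-\gamma}\,dt=\tfrac{(\log t_0)^{1-\gamma}}{\gamma-1}<\infty$ so that the whole tail $\int_{t_0}^\infty e^{-\xi t}\|CT(t)\|\,dt$ is bounded, matching $F_\gamma\equiv1$. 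Finally, for $\xi$ in the leftover range $[1/t_0,\xi_0)$ (if nonempty) one bounds $\int_{t_0}^\infty e^{-\xi t}\|CT(t)\|\,dt\leq M(\log t_0)^{-\gamma}\int_{t_0}^\infty e^{-\xi t}t^{-1}\,dt\leq M(\log t_0)^{-\gamma}$, a constant, and uses that $F_\gamma$ is bounded below there; collecting the pieces completes the proof. The same reasoning gives part a) on the remaining range $[1/t_0,1)$, where $g$ is bounded and the majorant is bounded below.
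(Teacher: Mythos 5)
Your proof is correct and follows essentially the same route as the paper's: the Laplace transform representation, the splitting of the integral at $t_0$ and at $1/\xi$, integration by parts on the middle piece with the threshold $t_0>e^{\gamma/(1-\beta)}$ to absorb the remainder, the elementary logarithmic integral in part b), and absorption of the constant contributions using that the majorant is bounded below away from $0$ (the paper phrases this last step via the Hille--Yosida bound). The only caveat is cosmetic: for $\gamma=1$ your parenthetical claim that $|\log\xi|^{-1}\leq\log|\log\xi|$ on all of $(0,e^{-1})$ fails near $\xi=e^{-1}$, but this is harmless since your own closing argument --- that $F_\gamma$ is bounded below by a positive constant on $(0,\xi_0)$ with $\xi_0<e^{-1}$ --- already covers it.
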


The following lemma will be useful in the proof of Proposition~\ref{prop:t_logt}.
\begin{lemma}
	\label{lem:integral_estimate}
	For fixed $0 \leq \beta < 1$ and $\gamma \geq 0$, the following statements hold:
	\begin{enumerate}[\em \alph{enumi})]
		\item
		For all $t_0 > e^{\gamma/(1-\beta)}$, there exists $M>0$ such that
		\[
		\int^{\infty}_{t_0}
		\frac{e^{- \xi t }}{t^\beta (\log t)^\gamma} dt \leq  \frac{M}{\xi ^{1-\beta}|\log \xi |^\gamma}
		\]
		for all $0 < \xi  < 1/t_0$.
		\item
		For all $t_0 > e$, there exists $M>0$ such that
		\[
		\int^{\infty}_{t_0}
		\frac{e^{- \xi t }}{t (\log t)^\gamma} dt \leq  MF_\gamma(\xi )
		\]
		for all $0 <\xi  < 1/t_0$, where
		$F_\gamma$ is defined by \eqref{eq:F_gamma_def}.
	\end{enumerate}
\end{lemma}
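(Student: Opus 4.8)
The plan is to split the integral at $t=1/\xi$ — legitimate since $\xi<1/t_0$ forces $1/\xi>t_0$ — and to treat the tail $\int_{1/\xi}^{\infty}$ and the bulk $\int_{t_0}^{1/\xi}$ separately, with the tail handled in the same way for both parts. On the tail one has $\log t\ge\log(1/\xi)=|\log\xi|$, so the factor $(\log t)^{-\gamma}$ may be pulled out as the constant $|\log\xi|^{-\gamma}$, leaving $\int_{1/\xi}^{\infty}t^{-\beta}e^{-\xi t}\,dt$. For part a) this is at most $\int_0^{\infty}t^{-\beta}e^{-\xi t}\,dt=\Gamma(1-\beta)\,\xi^{\beta-1}$, so the tail contributes precisely a term of the desired order $\xi^{-(1-\beta)}|\log\xi|^{-\gamma}$. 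For part b), where $\beta=1$, the substitution $s=\xi t$ turns $\int_{1/\xi}^{\infty}t^{-1}e^{-\xi t}\,dt$ into the $\xi$-independent constant $\int_1^{\infty}s^{-1}e^{-s}\,ds$, so the tail contributes $O(|\log\xi|^{-\gamma})$; since $|\log\xi|>\log t_0>1$ this is bounded, and since $F_\gamma$ is bounded below by a positive constant on $(0,1/t_0)$ — this is exactly where $t_0>e$ enters, so that $\log|\log\xi|\ge\log\log t_0>0$ in the case $\gamma=1$ — the tail is at most a constant times $F_\gamma(\xi)$.

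For the bulk I would estimate $e^{-\xi t}\le 1$ throughout. In part b) the substitution $v=\log t$ reduces $\int_{t_0}^{1/\xi}t^{-1}(\log t)^{-\gamma}\,dt$ to $\int_{\log t_0}^{|\log\xi|}v^{-\gamma}\,dv$, which I would evaluate explicitly: it equals $\tfrac{1}{1-\gamma}\big(|\log\xi|^{1-\gamma}-(\log t_0)^{1-\gamma}\big)$ for $\gamma<1$, $\log|\log\xi|-\log\log t_0$ for $\gamma=1$, and $\tfrac{1}{\gamma-1}\big((\log t_0)^{1-\gamma}-|\log\xi|^{1-\gamma}\big)$ for $\gamma>1$, and in each case this is at most a constant multiple of $F_\gamma(\xi)$ (once more using $t_0>e$ for $\gamma=1$). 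In part a) I would instead integrate by parts, antidifferentiating $t^{-\beta}$ to $t^{1-\beta}/(1-\beta)$: this produces the boundary term $\tfrac{1}{1-\beta}\,\xi^{-(1-\beta)}|\log\xi|^{-\gamma}$ at $t=1/\xi$, already of the right order, a boundary term at $t_0$ of favorable (negative) sign, and the remainder $\tfrac{\gamma}{1-\beta}\int_{t_0}^{1/\xi}t^{-\beta}(\log t)^{-\gamma-1}\,dt$. Since $\log t\ge\log t_0$ on this interval, the remainder is at most $\tfrac{\gamma}{(1-\beta)\log t_0}$ times the original bulk integral, and the hypothesis $t_0>e^{\gamma/(1-\beta)}$ says exactly that this factor is $<1$; rearranging (the bulk integral is finite, its integrand being bounded on $[t_0,1/\xi]$) then bounds the bulk integral by a constant depending on $t_0,\beta,\gamma$ times $\xi^{-(1-\beta)}|\log\xi|^{-\gamma}$. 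The case $\gamma=0$ is immediate without the integration by parts.

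The step I expect to require the most care is this self-improving integration by parts in part a): the point to notice is that the stated constraint on $t_0$ is not an artifact but is precisely what forces the absorption constant $\gamma/\big((1-\beta)\log t_0\big)$ below $1$. The other place needing attention, in both parts, is that the tail estimate is not sharp — it yields only $O(|\log\xi|^{-\gamma})$ in part b) — so one must check that $|\log\xi|^{-\gamma}$ is dominated by $F_\gamma(\xi)$ on $(0,1/t_0)$; writing $w=|\log\xi|\in(\log t_0,\infty)$, this reduces to the elementary inequalities $w^{-\gamma}\le(\log t_0)^{-1}w^{1-\gamma}$ for $\gamma<1$, $w^{-1}\le(\log t_0\,\log\log t_0)^{-1}\log w$ for $\gamma=1$, and $w^{-\gamma}\le(\log t_0)^{-\gamma}$ for $\gamma>1$, all of which use only $\log t_0>1$, i.e. $t_0>e$. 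Finally, combining the bulk and tail bounds gives the asserted estimates, and for $\xi$ bounded away from $0$ within $(0,1/t_0)$ the conclusions are in any case immediate from continuity since the right-hand sides are bounded below there.
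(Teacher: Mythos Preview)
Your proposal is correct and follows essentially the same approach as the paper: split at $t=1/\xi$, handle the bulk on $[t_0,1/\xi]$ by dropping $e^{-\xi t}\le 1$ and then either integrating by parts with the absorption argument (part a), where the hypothesis on $t_0$ makes the constant $\gamma/((1-\beta)\log t_0)$ strictly less than $1$) or substituting $v=\log t$ and evaluating explicitly (part b)), and bound the tail by pulling out $(\log t)^{-\gamma}\le|\log\xi|^{-\gamma}$. The only cosmetic differences are that the paper bounds the tail in part a) by estimating $t^{-\beta}\le\xi^{\beta}$ and integrating $e^{-\xi t}$ (giving the constant $e^{-1}$) rather than extending to $\int_0^\infty$ and invoking $\Gamma(1-\beta)$, and in part b) it simply notes $\inf_{(0,1/t_0)}F_\gamma>0$ rather than writing out the explicit inequalities in $w$; your treatment is equally valid.
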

\begin{proof}
	a)
	Let $t_0 > e^{\gamma/(1-\beta)}$ and
	$0< \xi  < 1/t_0$.
	Then
	\begin{align}
	\int^{\infty}_{1/\xi }
	\frac{e^{-\xi t }}{t^\beta (\log t)^\gamma} dt \leq 
	\frac{1}{(1/\xi )^\beta |\log \xi |^\gamma} \int^{\infty}_{1/\xi } e^{-\xi t }dt 
	= \frac{e^{-1}}{\xi ^{1-\beta}|\log \xi |^\gamma}.
	\label{eq:int_est_larger}
	\end{align}
	We also have 
	\begin{align}
	\label{eq:int_est_smaller_pre}
	\int^{1/\xi }_{t_0}
	\frac{e^{-\xi t }}{t^\beta (\log t)^\gamma} dt 
	\leq 
	\int^{1/\xi }_{t_0}
	\frac{1}{t^\beta (\log t)^\gamma} dt.
	\end{align}
	Integration by parts gives
	\begin{align}
	\int^{1/\xi }_{t_0}
	\frac{1}{t^\beta (\log t)^\gamma} dt  
	&=
	\frac{(1/\xi )^{1-\beta} }{(1-\beta) |\log \xi |^\gamma }
	-
	\frac{t_0^{1-\beta} }{(1-\beta) (\log t_0)^\gamma }
	+
	\frac{\gamma}{1-\beta}\int^{1/\xi }_{t_0}
	\frac{1}{t^\beta (\log t)^{\gamma+1}} dt   \notag \\
	& \leq 
	\frac{(1/\xi )^{1-\beta} }{(1-\beta) |\log \xi |^\gamma }
	+
	\frac{\gamma}{(1-\beta)\log t_0} \int^{1/\xi }_{t_0}
	\frac{1}{t^\beta (\log t)^{\gamma}} dt .
	\label{eq:int_estimate_exp_poly}
	\end{align}
	Since $t_0 > e^{\gamma/(1-\beta)}$ is equivalent to
	\[
	\frac{\gamma}{(1-\beta) \log t_0 } < 1,
	\]
	the estimates \eqref{eq:int_est_smaller_pre}
	and  \eqref{eq:int_estimate_exp_poly} yield
	\begin{equation}
	\label{eq:int_est_smaller}
	\int^{1/\xi }_{t_0}
	\frac{e^{-\xi t }}{t^\beta (\log t)^\gamma} dt  \leq 
	\frac{M_0}{\xi ^{1-\beta} |\log \xi |^\gamma },
	\end{equation}
	where
	\[
	M_0 \coloneqq 
	\frac{1}{1-\beta}
	\left(
	1 - 
	\frac{\gamma}{(1-\beta) \log t_0 }
	\right).
	\]
	The assertion with $M \coloneqq M_0 + e^{-1}$ then follows from 
	the estimates \eqref{eq:int_est_larger} and 
	\eqref{eq:int_est_smaller}, noting that $M$ is independent of $\xi$.
	
	b) 
	Let $t_0 > e$ and
	$0< \xi  < 1/t_0$.
	Since
	\[
	\int^{1/\xi }_{t_0}
	\frac{e^{-\xi t }}{t(\log t)^\gamma} dt  
	\leq \int^{1/\xi }_{t_0}
	\frac{1}{t(\log t)^\gamma} dt  
	=
	\int_{\log t_0}^{\log(1/\xi )} 
	\frac{1}{\tau^\gamma} d\tau,
	\]
	we obtain
	\begin{align*}
	\int^{1/\xi }_{t_0}
	\frac{e^{-\xi t }}{t(\log t)^\gamma} dt  
	& \leq
	\begin{cases}
	\dfrac{|\log \xi |^{1-\gamma} - 
		(\log t_0)^{1-\gamma}}{1-\gamma}, & \gamma \not=1, \\
	\log|\log \xi | - \log (\log t_0),
	& \gamma = 1.
	\end{cases}
	\end{align*}
	Define
	\[
	G_\gamma(\xi ) \coloneqq 
	\begin{cases}
	\dfrac{|\log \xi |^{1-\gamma}}{1-\gamma},  & 0\leq \gamma <1, \\
	\log|\log \xi |,
	& \gamma = 1, \vspace{3pt}\\
	\dfrac{(\log t_0)^{1-\gamma}}{\gamma-1},
	&\gamma > 1.
	\end{cases}
	\]
	Then, instead of the estimate \eqref{eq:int_est_smaller}, we have 
	\begin{equation*}
	\int^{1/\xi }_{t_0}
	\frac{e^{-\xi t }}{t (\log t)^\gamma} dt  \leq 
	G_\gamma(\xi ).
	\end{equation*}
	On the other hand, estimating as in \eqref{eq:int_est_larger} yields
	\begin{equation*}
	\int^{\infty}_{1/\xi }
	\frac{e^{-\xi t }}{t (\log t)^\gamma} dt \leq 
	\frac{e^{-1}}{|\log \xi |^\gamma} < \frac{e^{-1}}{(\log t_0)^\gamma}.
	\end{equation*}
	Since $t_0 > e$, the function
	$F_\gamma$ defined by \eqref{eq:F_gamma_def} satisfies
	$\inf_{0< \xi<1/t_0} F_{\gamma}(\xi) >0 $ for all $\gamma \geq 0$.
	Thus, the desired estimate is obtained for
	some $M>0$ depending only on $t_0$ and $\gamma$. 
\end{proof}

\begin{proof}[Proof of Proposition~\Rref{prop:t_logt}]
	a)
	By assumption, there exist $M_0>0$ and $t_0 > e^{\gamma/(1-\beta)}$ such that
	\[
	\|CT(t)\| \leq
	\frac{M_0}{t^\beta (\log t)^\gamma}
	\]
	for all $t \geq t_0$. 
	For all $x \in X$ and $\lambda \in \mathbb{C}_+$, 
	\[
	\|CR(\lambda,A)x\| \leq \int^{\infty}_0 e^{-t \re \lambda }\|CT(t)x\|dt.
	\]
	Define $c \coloneqq \sup_{t \geq 0} \|CT(t)\|$.
	For all $x \in X$ and $\lambda \in \mathbb{C}_+$,
	\begin{align*}
	\int^{\infty}_0 e^{-t \re \lambda }\|CT(t)x\|dt 
	&\leq 
	\int^{t_0}_0 c \|x\| dt + 
	\int^{\infty}_{t_0}
	e^{-t \re \lambda} \frac{M_0 \|x\|}{t^\beta (\log t)^\gamma} dt \\
	&=\left(
	ct_0 + 
	M_0
	\int^{\infty}_{t_0}
	\frac{e^{-t \re \lambda} }{t^\beta (\log t)^\gamma} dt\right) \|x\| .
	\end{align*}
	Hence
	by Lemma~\ref{lem:integral_estimate}.a),
	there exists $K_0 >0$ such that
	\[
	\|CR(\lambda,A)\| \leq
	\frac{K_0}{(\re \lambda)^{1-\beta} |\log \re \lambda|^\gamma}
	\]
	for all $\lambda \in \mathbb{C}_+$ with $\re \lambda < 1/t_0$.
	Combining this with the Hille-Yosida theorem,
	we obtain the desired estimate
	for all $\lambda \in \mathbb{C}_+$ with $\re \lambda < 1$ and some
	$K >0$.
	
	b)
	By the same argument as above,
	we have that 
	the statement b) is true, using 
	Lemma~\ref{lem:integral_estimate}.b) 
	instead of Lemma~\ref{lem:integral_estimate}.a).
\end{proof}

Let $(T(t))_{t \geq 0}$ be
a bounded $C_0$-semigroup 
on a Hilbert space with
generator $A$ such that $i\mathbb{R} \subset \varrho(A)$, and let $\alpha >0$.
From Proposition~\ref{prop:interpolation} and 
Theorem~\ref{thm:uniform_bounded},
we already know  that $\|T(t)(-A)^{-\alpha}\| = O(1/t)$ as $t\to \infty$ if and only if
$
\sup_{\lambda \in \mathbb{C}_+}\|R(\lambda,A)(-A)^{-\alpha}\| < \infty.
$
However, Proposition~\ref{prop:t_logt}.b) with $C \coloneqq (-A)^{-\alpha}$ and $\gamma = 0$
only shows that if $\|T(t)(-A)^{-\alpha}\| = O(1/t)$ as $t\to \infty$, then
$
\|R(\lambda,A)(-A)^{-\alpha}\|= O(|\log  \re \lambda|)
$
as $\re \lambda \to 0+$.
From this, we see that Proposition~\ref{prop:t_logt}.b) with $C \coloneqq (-A)^{-\alpha}$ does not 
give a sharp bound of $\|R(\lambda,A)(-A)^{-\alpha}\|$ in the Hilbert space setting. 
In contrast, Proposition~\ref{prop:t_logt}.a) with $C \coloneqq (-A)^{-\alpha}$,
combined with Proposition~\ref{prop:WC_to_poly_decay},
yields a sharp result on the relation between the rate of
decay of $\|T(t)(-A)^{-\alpha}\|$ as $t\to\infty$
and the rate of growth of $\|R(\lambda,A)(-A)^{-\alpha}\|$ as $\re \lambda \to 0+$.

\begin{theorem}
	\label{thm:poly_log}
	Let $(T(t))_{t \geq 0}$ be a bounded $C_0$-semigroup on a Hilbert 
	space $X$ with generator $A$ such that $0 \in  \varrho(A)$.
	The following statements are equivalent for fixed 
	$\alpha  > 0$, 
	$0 \leq \beta < 1$, and $\gamma\geq 0$:
	\begin{enumerate}[\em (\roman{enumi})]
		\item
		$\
		\|T(t)(-A)^{-\alpha}\| = O (
		t^{-\beta} (\log t)^{-\gamma}
		)
		$
		as $t \to \infty$.
		\item
		There exists $K >0$ such that
		\[
		\|R(\lambda,A)(-A)^{-\alpha}\| \leq
		\frac{K}{(\re \lambda)^{1-\beta} |\log \re \lambda|^\gamma}
		\]
		for all $\lambda \in \mathbb{C}_+$ with $\re \lambda < 1$.
	\end{enumerate}
\end{theorem}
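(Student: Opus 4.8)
The plan is to derive both implications from the two transference results already established, applied with the bounded operator $C \coloneqq (-A)^{-\alpha}$. Since $0 \in \varrho(A)$, the fractional power $(-A)^{-\alpha}$ lies in $\mathcal{L}(X)$, commutes with $T(t)$ for every $t \geq 0$, and satisfies $(-A)^{-\alpha}T(t) = T(t)(-A)^{-\alpha}$; in particular $\|CT(t)\| = \|T(t)(-A)^{-\alpha}\|$ and $\|CR(\lambda,A)\| = \|R(\lambda,A)(-A)^{-\alpha}\|$ for all $t \geq 0$ and $\lambda \in \varrho(A)$, so the two statements translate directly into statements about $C$.

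For the implication (i) $\Rightarrow$ (ii) there is essentially nothing to do: statement (i) is exactly the hypothesis of Proposition~\ref{prop:t_logt}.a) with $Y \coloneqq X$ and $C \coloneqq (-A)^{-\alpha} \in \mathcal{L}(X)$, and the conclusion of that proposition is verbatim the resolvent estimate claimed in (ii) for $\re\lambda < 1$. For the converse implication (ii) $\Rightarrow$ (i) I would invoke Proposition~\ref{prop:WC_to_poly_decay} with $C \coloneqq (-A)^{-\alpha}$. That proposition needs an estimate $\|CR(\lambda,A)\| \leq F(\re\lambda)$ valid on \emph{all} of $\mathbb{C}_+$, whereas (ii) supplies it only on the strip $\re\lambda < 1$; the single missing ingredient is a bound for $\re\lambda \geq 1$, furnished by the Hille--Yosida estimate $\|R(\lambda,A)\| \leq c/\re\lambda$ with $c \coloneqq \sup_{t\geq 0}\|T(t)\|$ together with the boundedness of $(-A)^{-\alpha}$. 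Setting $F(\xi) \coloneqq K\,\xi^{-(1-\beta)}|\log\xi|^{-\gamma}$ for $0 < \xi < 1$ and $F(\xi) \coloneqq c\,\|(-A)^{-\alpha}\|\,\xi^{-1}$ for $\xi \geq 1$, Proposition~\ref{prop:WC_to_poly_decay} yields $M > 0$ with $\|T(t)(-A)^{-\alpha}\| \leq M F(1/t)/t$ for every $t > 0$. For $t > 1$ one has $1/t < 1$ and $|\log(1/t)| = \log t$, hence $F(1/t)/t = K\,t^{-\beta}(\log t)^{-\gamma}$, which is precisely (i); for $0 < t \leq 1$ the complementary branch of $F$ gives a constant bound, in agreement with the trivial estimate $\|T(t)(-A)^{-\alpha}\| \leq c\|(-A)^{-\alpha}\|$.

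The only point needing a little care is thus the passage from the strip $\{\re\lambda < 1\}$ to all of $\mathbb{C}_+$, and this is exactly where the hypothesis $0 \in \varrho(A)$ enters — both to make $(-A)^{-\alpha}$ a bounded operator, so that Proposition~\ref{prop:t_logt}.a) applies with this choice of $C$, and to supply the Hille--Yosida bound that fills the gap. No deeper difficulty is anticipated: the substance of the theorem is already contained in Propositions~\ref{prop:t_logt}.a) and~\ref{prop:WC_to_poly_decay}, and the sharp exponent $(\re\lambda)^{1-\beta}$ — as opposed to the weaker bound issuing from Proposition~\ref{prop:t_logt}.b) — is inherited directly from part a).
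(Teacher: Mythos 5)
Your proposal is correct and follows essentially the same route as the paper: Proposition~\ref{prop:t_logt}.a) with $C \coloneqq (-A)^{-\alpha}$ for (i)~$\Rightarrow$~(ii), and Proposition~\ref{prop:WC_to_poly_decay} with $F(\xi) = K\xi^{-(1-\beta)}|\log\xi|^{-\gamma}$ for (ii)~$\Rightarrow$~(i). Your explicit extension of $F$ to $\xi \geq 1$ via the Hille--Yosida bound is a point the paper's proof leaves implicit, and it is handled correctly.
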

\begin{proof}
	The implication (i) $\Rightarrow$ (ii) has already been proved in Proposition~\ref{prop:t_logt}.a)
	with $C \coloneqq (-A)^{-\alpha}$.
	To show the converse implication (ii) $\Rightarrow$ (i), we define the function $F$ in 
	Proposition~\ref{prop:WC_to_poly_decay} by
	\[
	F(\xi ) \coloneqq \frac{K}{\xi ^{1-\beta} |\log \xi |^{\gamma}} 
	\]
	for $0 < \xi  < 1$.  Then
	\[
	\frac{F(1/t)}{t} = \frac{K}{t^\beta (\log t)^{\gamma}}
	\]
	for all $t > 1$.
	Thus, the  implication (ii) $\Rightarrow$ (i)
	holds by Proposition~\ref{prop:WC_to_poly_decay} with $C \coloneqq (-A)^{-\alpha}$.
\end{proof}

We also obtain a characterization of the same class of 
non-polynomial rates of decay for
$\|T(t) (-A)^{\alpha}(I-A)^{-\alpha-\delta}\|$ with $\alpha >0$ and 
$\delta \geq 0$.
The proof is the same as that of Theorem~\ref{thm:poly_log} and hence is omitted.
\begin{theorem}
	Let $(T(t))_{t \geq 0}$ be a bounded $C_0$-semigroup on a Hilbert 
	space $X$ with generator $A$.
	The following statements are equivalent for fixed $\alpha >0$, 
	$0\leq \beta < 1$, 
	$\gamma\geq 0$, and $\delta \geq 0$:
	\begin{enumerate}[\em (\roman{enumi})]
		\item 
		$\|T(t) (-A)^{\alpha}(I-A)^{-\alpha-\delta}\| = O (
		t^{-\beta} (\log t)^{-\gamma}
		)$ as $t\to \infty$.
		\item 
		There exists $K >0$ such that
		\[
		\|R(\lambda,A)(-A)^{\alpha}(I-A)^{-\alpha-\delta}\| \leq
		\frac{K}{(\re \lambda)^{1-\beta} |\log \re \lambda|^\gamma}
		\]
		for all $\lambda \in \mathbb{C}_+$ with $\re \lambda < 1$.
	\end{enumerate}
\end{theorem}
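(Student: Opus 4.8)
The plan is to repeat, essentially verbatim, the proof of Theorem~\ref{thm:poly_log}, now with the operator
\[
C \coloneqq (-A)^{\alpha}(I-A)^{-\alpha-\delta}
\]
playing the role that $(-A)^{-\alpha}$ plays there. The first point to settle is that $C \in \mathcal{L}(X)$. Since $-A(I-A)^{-1}$ is a bounded sectorial operator by \cite[Lemma~3.2]{Batty2016}, its fractional power $\bigl(-A(I-A)^{-1}\bigr)^{\alpha} = (-A)^{\alpha}(I-A)^{-\alpha}$ is bounded by the product and composition rules for fractional powers recalled in Section~\ref{sec:Preliminaries}; composing on the right with $(I-A)^{-\delta}$, which is well defined and bounded because $\sigma(A)\subset \overline{\mathbb{C}}_-$ (so that $1 \in \varrho(A)$), gives $C \in \mathcal{L}(X)$. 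Note that, unlike in Theorem~\ref{thm:poly_log}, no invertibility of $A$ is required, since only the positive power $(-A)^{\alpha}$ together with the bounded operator $(I-A)^{-\alpha-\delta}$ occurs. Moreover $T(t)$ ($t \geq 0$) and $R(\lambda,A)$ ($\lambda \in \varrho(A)$) commute with every function of $A$, hence with $C$, so $\|R(\lambda,A)C\| = \|CR(\lambda,A)\|$.

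For the implication (i) $\Rightarrow$ (ii) I would apply Proposition~\ref{prop:t_logt}.a) to this $C$ (with $Y \coloneqq X$): statement (i) is precisely $\|CT(t)\| = O\bigl(t^{-\beta}(\log t)^{-\gamma}\bigr)$ as $t \to \infty$, so the proposition supplies $K>0$ with $\|CR(\lambda,A)\| \leq K(\re\lambda)^{-(1-\beta)}|\log\re\lambda|^{-\gamma}$ for all $\lambda \in \mathbb{C}_+$ with $\re\lambda < 1$, and by the commutativity just noted this is exactly statement (ii).

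For the converse (ii) $\Rightarrow$ (i) I would feed Proposition~\ref{prop:WC_to_poly_decay} the function
\[
F(\xi) \coloneqq \frac{K}{\xi^{1-\beta}|\log\xi|^{\gamma}} \ \ (0<\xi<1), \qquad F(\xi) \coloneqq \frac{c\,\|C\|}{\xi} \ \ (\xi \geq 1),
\]
where $c \coloneqq \sup_{t\geq 0}\|T(t)\|$; combining (ii) with the Hille--Yosida bound $\|R(\lambda,A)\| \leq c/\re\lambda$ yields $\|CR(\lambda,A)\| \leq F(\re\lambda)$ for all $\lambda \in \mathbb{C}_+$, so Proposition~\ref{prop:WC_to_poly_decay} produces $M>0$ with $\|CT(t)\| \leq M F(1/t)/t$ for every $t > 0$. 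Since $F(1/t)/t = K t^{-\beta}(\log t)^{-\gamma}$ for $t > 1$, this is (i); this last computation is literally the one in the proof of Theorem~\ref{thm:poly_log}.

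The only step that is not a literal transcription of the proof of Theorem~\ref{thm:poly_log} is the verification that $C = (-A)^{\alpha}(I-A)^{-\alpha-\delta}$ is bounded on $X$ and commutes with the semigroup and resolvent, which is where the sectoriality of $-A(I-A)^{-1}$ and the identity $\bigl(-A(I-A)^{-1}\bigr)^{\alpha} = (-A)^{\alpha}(I-A)^{-\alpha}$ enter. I do not expect a genuine obstacle here: once $C$ is recognised as the bounded operator $\bigl(-A(I-A)^{-1}\bigr)^{\alpha}(I-A)^{-\delta}$, the remainder of the argument is identical to the polynomial-times-logarithm case already treated in Theorem~\ref{thm:poly_log}.
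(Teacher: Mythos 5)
Your proposal is correct and follows exactly the route the paper intends: the paper omits the proof with the remark that it is the same as that of Theorem~\ref{thm:poly_log}, i.e.\ apply Proposition~\ref{prop:t_logt}.a) and Proposition~\ref{prop:WC_to_poly_decay} with $C \coloneqq (-A)^{\alpha}(I-A)^{-\alpha-\delta}$, which is bounded since it equals $\bigl(-A(I-A)^{-1}\bigr)^{\alpha}(I-A)^{-\delta}$ by the sectoriality of $-A(I-A)^{-1}$ and the product/composition rules already invoked before Theorem~\ref{thm:singularity_zero}. Your extra care in extending $F$ to $[1,\infty)$ via the Hille--Yosida bound is a detail the paper glosses over but is exactly what is needed to apply Proposition~\ref{prop:WC_to_poly_decay}.
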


\section{Sufficient condition for $L^2$-infinite-time admissibility}
\label{sec:suff_cond}
Let $X$ and $Y$ be Hilbert spaces.
Let $A$ be the generator of 
a bounded $C_0$-semigroup $(T(t))_{t \geq 0}$
on $X$.
In this section, we derive a sufficient condition for
$C \in \mathcal{L}(D(A,Y)$ to be $L^2$-infinite-time admissible.
Before proceeding to the details, we recall the definitions of
polynomial stability of $C_0$-semigroups and 
$L^2$-finite-time admissibility.

\begin{definition}
	{\em
		A $C_0$-semigroup $(T(t))_{t\geq 0}$
		on a Banach space $X$  with generator $A$ is {\em polynomially 
			stable} if the following two conditions are satisfied:
		\begin{enumerate}[\alph{enumi})]
			\item
			$(T(t))_{t\geq 0}$ is  bounded.
			\item
			There exists $\alpha >0$ such that $\displaystyle 
			\|T(t)(I-A)^{-1}\| = O(t^{-1/\alpha})$ as $t \to \infty$.
		\end{enumerate}
	}
\end{definition}

Let $A$ be the generator of a polynomially stable $C_0$-semigroup $(T(t))_{t\geq 0}$ on a Banach space. Then $i \mathbb{R} \cap \sigma (A) = \emptyset$ by \cite[Theorem~1.1]{Batty2008}. Therefore, when 
$\|T(t)(I-A)^{-1}\| = O(t^{-1/\alpha})$ as $t \to \infty$
for some $\alpha >0$, 
the estimate 
$\|T(t)A^{-1}\| = O(t^{-1/\alpha})$ as $t \to \infty$ is also satisfied.

\begin{definition}
	{\em
		Let $X$ and $Y$ be Banach spaces, and
		let $A$ be the generator of a $C_0$-semigroup 
		$(T(t))_{t \geq 0}$ on $X$.
		An operator $C\in \mathcal{L}(D(A),Y)$ is {\em $L^2$-finite-time admissible for $A$}
		if 
		there exist $M_1>0$ and $t_1>0$ such that 
		\[
		\int^{t_1}_0 \|CT(t)x\|^2 dt \leq M_1 \|x\|^2
		\]
		for all $x \in D(A)$.
	}
\end{definition}
From the semigroup property, it follows
that if
$C\in \mathcal{L}(D(A),Y)$ is $L^2$-finite-time admissible for $A$,
then {\em for  each} $t_2 >0$, there exists $M_2>0$ such that for all $x \in D(A)$,
\[
\int^{t_2}_0 \|CT(t)x\|^2 dt \leq M_2 \|x\|^2.
\]
By definition, every
$C \in \mathcal{L}(X,Y)$ is $L^2$-finite-time
admissible.

Let $X$ and $Y$ be Hilbert spaces, and let $\mathbb{C}_a \coloneqq \{
\lambda \in \mathbb{C}: \re \lambda > a
\}$ for $a \in \mathbb{R}$.
If $A$ is the generator of a $C_0$-semigroup on $X$
such that $T(t_0)$ is surjective for some $t_0>0$ 
and if $C \in \mathcal{L}(D(A),Y)$
satisfies
$\sup_{\lambda \in \mathbb{C}_a} \|CR(\lambda ,A)\| < \infty$ for some $a \in \mathbb{R}$,
then $C$ is $L^2$-finite-time admissible for $A$; see \cite[Theorem~4.1]{Weiss1991} and 
\cite[Theorem~2.2]{Zwart2005}.
Other sufficient conditions for $L^2$-finite-time admissibility 
in the Hilbert space setting can be found, e.g.,
in \cite[Theorem~3.3]{Zwart2005} and \cite[Lemma~2.6]{Chill2023}.

Combining $L^2$-finite-time admissibility with the
logarithmic decay of $\|T(t)(-A)^{-\alpha}\|$ and the $2$-Weiss condition
of $C(-A)^{\alpha}$, we obtain a sufficient condition for 
$L^2$-infinite-time admissibility. It
can be proved by the technique used 
in the proof of  \cite[Theorem~4.2]{Zwart2005}.
Note that 
the extension of $C(-A)^{\alpha}$ to $\mathcal{L}(D(A),Y)$ is unique
if it exists, since $D((-A)^{1+\alpha})$ is dense in $D(A)$ with respect to the graph norm
$\|\cdot\|_A$.
\begin{theorem}
	\label{thm:Weiss_cond}
	Let $X$ and $Y$ be Hilbert spaces.
	Let $(T(t))_{t\geq 0}$ be 
	a bounded $C_0$-semigroup
	on $X$ with generator $A$ such that $0 \in \varrho(A)$,
	and let $C \in \mathcal{L}(D(A),Y)$.
	Assume that the following conditions
	hold for fixed $\alpha >0$ and $\beta >1/2$:
	\begin{enumerate}[\em \alph{enumi})]
		\item $C$ is $L^2$-finite-time admissible for $A$; 
		\item
		$
		\|T(t)(-A)^{-\alpha}\| \leq 
		O((\log t)^{-\beta})
		$
		as $t \to \infty$; and
		\item 
		$C(-A)^{\alpha}$ extends to an operator in $\mathcal{L}(D(A),Y)$,
		and its extension satisfies the $2$-Weiss condition for $A$.
	\end{enumerate}
	Then
	$C$ is $L^2$-infinite-time admissible for $A$.
\end{theorem}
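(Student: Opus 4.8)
The plan is to estimate $\int_0^{\infty}\|CT(t)y\|^2\,dt$ for $y\in D(A)$ by splitting the integral at a large finite time $t_2$. Over $[0,t_2]$, condition a) together with the remark following the definition of $L^2$-finite-time admissibility supplies a constant $M_2=M_2(t_2)$ with $\int_0^{t_2}\|CT(t)y\|^2\,dt\le M_2\|y\|^2$, so the task reduces to bounding the tail $\int_{t_2}^{\infty}\|CT(t)y\|^2\,dt$ by a constant times $\|y\|^2$; it is here that conditions b) and c) enter.

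Write $D$ for the (unique) extension of $C(-A)^{\alpha}$ to $\mathcal{L}(D(A),Y)$. Since $0\in\varrho(A)$, one checks that $Cz=D(-A)^{-\alpha}z$ for every $z\in D(A)$, and since $(-A)^{-\alpha}$ commutes with $R(\lambda,A)$ this yields, for $y\in D(A)$ and $\lambda\in\mathbb{C}_+$,
\[
CR(\lambda,A)^2 y = DR(\lambda,A)\bigl(R(\lambda,A)(-A)^{-\alpha}y\bigr),
\]
whence, by the $2$-Weiss condition on $D$ from condition c),
\[
\|CR(\lambda,A)^2 y\| \le \frac{K}{\sqrt{\re\lambda}}\,\|R(\lambda,A)(-A)^{-\alpha}y\|.
\]
Applying Plancherel's theorem in the form of the second identity to $CT(\cdot)y$, and then the first identity to $T(\cdot)(-A)^{-\alpha}y$, I obtain for every $\xi>0$
\[
\int_0^{\infty} t^2 e^{-2\xi t}\|CT(t)y\|^2\,dt \le \frac{K^2}{\xi}\int_0^{\infty} e^{-2\xi t}\,\|T(t)(-A)^{-\alpha}y\|^2\,dt.
\]
Condition b) bounds $\|T(t)(-A)^{-\alpha}\|$ uniformly in $t$ and by $M_0(\log t)^{-\beta}$ for $t\ge t_0$; inserting this, splitting the last integral at $t_0$, and estimating the tail by Lemma~\ref{lem:integral_estimate}.a) (with the roles of $\beta$ and $\gamma$ there played by $0$ and $2\beta$) produces a constant $M_3$, independent of $\xi$ and $y$, with
\[
\int_0^{\infty} t^2 e^{-2\xi t}\|CT(t)y\|^2\,dt \le \frac{M_3}{\xi^2\,|\log\xi|^{2\beta}}\,\|y\|^2
\]
for all sufficiently small $\xi>0$ — the contribution of $[0,t_0]$ and the gap between $|\log 2\xi|$ and $|\log\xi|$ being absorbed into the constant for small $\xi$.

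The tail is then handled by a dyadic decomposition. Fixing $\tau$ large and taking $\xi=1/\tau$, the elementary bound $t^2 e^{-2t/\tau}\ge\tau^2 e^{-4}$ on $[\tau,2\tau]$ converts the last display into $\int_{\tau}^{2\tau}\|CT(t)y\|^2\,dt\le e^4 M_3(\log\tau)^{-2\beta}\|y\|^2$. Choosing $t_2$ large enough and summing over $\tau=2^k t_2$, $k\ge 0$, gives
\[
\int_{t_2}^{\infty}\|CT(t)y\|^2\,dt \le e^4 M_3\Bigl(\sum_{k\ge 0}\frac{1}{(k\log 2 + \log t_2)^{2\beta}}\Bigr)\|y\|^2,
\]
and the series converges precisely because $2\beta>1$, i.e.\ $\beta>1/2$. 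Together with the $[0,t_2]$ estimate this gives $\int_0^{\infty}\|CT(t)y\|^2\,dt\le M\|y\|^2$ for all $y\in D(A)$, which is exactly the $L^2$-infinite-time admissibility of $C$.

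I expect the main work to be the bookkeeping in the middle step: justifying the two applications of Plancherel's theorem (finiteness of the relevant integrals for $y\in D(A)$, and pulling $C$ and $D$ through the Laplace-transform integrals, using $R(\lambda,A)^2 y\in D(A)$), and tracking that precisely the power $2\beta$ of the logarithm survives Lemma~\ref{lem:integral_estimate}. The conceptual heart is nonetheless the final dyadic summation, where the hypothesis $\beta>1/2$ is used and cannot be relaxed by this method.
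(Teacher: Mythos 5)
Your argument is correct and is essentially the paper's own proof: two applications of Plancherel's theorem combined with the factorization $\|CR(\lambda,A)^2y\|\le\|C(-A)^{\alpha}R(\lambda,A)\|\,\|R(\lambda,A)(-A)^{-\alpha}y\|$, the logarithmic decay fed through Lemma~\ref{lem:integral_estimate} with exponent $2\beta$, and a geometric decomposition of the tail whose series converges precisely because $2\beta>1$. Your dyadic intervals $[\tau,2\tau]$ with the lower bound $t^2e^{-2t/\tau}\ge\tau^2e^{-4}$ are only a cosmetic variant of the paper's sequence $(\tau_n)$ built from $\mu_n te^{-\mu_nt}>1/(2e)$.
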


\begin{proof}
	Using Plancherel's theorem, we have that
	for all $\xi >0$ and $x \in X$,
	\begin{align}
	\label{eq:first_Plancherel}
	\int^{\infty}_{-\infty}
	\|R(\xi+i\eta,A)(-A)^{-\alpha} x\|^2 d\eta
	&= 2\pi 
	\int^{\infty}_0  
	\|e^{-\xi t} T(t)(-A)^{-\alpha} x\|^2 dt.
	\end{align}
	By the assumption b),
	there exist constants 
	$M_0>0$ and $t_0 >e^{2\beta}$ such that 
	\[
	\|T(t)(-A)^{-\alpha}\| \leq \frac{M_0}{(\log t)^\beta} 
	\]
	for all $t >t_0$. 
	Lemma~\ref{lem:integral_estimate} shows that
	there exists a constant $M_1 >0$
	such that
	\[
	\int^{\infty}_{t_0} \frac{e^{-\xi t}}{(\log t)^{2\beta}} dt \leq 
	\frac{M_1}{\xi |\log \xi |^{2\beta}}
	\]
	for all $0 < \xi  < 1/t_0$.
	Put $c \coloneqq \sup_{t \geq 0} \|T(t) (-A)^{-\alpha}\|$. Then
	\begin{align}
	\int^{\infty}_0  
	\|e^{-\xi  t} T(t)(-A)^{-\alpha} x\|^2 dt
	&\leq 
	c^2t_0 \|x\|^2
	+ 
	M_0^2
	\int^{\infty}_{t_0} \frac{e^{-2\xi   t}}{(\log t)^{2\beta}} dt  \|x\|^2\notag \\
	&\leq 
	\left(c^2t_0 + \frac{M_0^2 M_1}{2\xi  |\log (2\xi )|^{2\beta} }\right) \|x\|^2
	\label{eq:RAx}
	\end{align}
	for all  $0 < \xi  < 1/(2t_0)$ and $x \in X$.
	Let the extension of $C(-A)^{\alpha}$ in the assumption c) be 
	also denoted by $C(-A)^{\alpha}$.
	There exists a constant $K >0$ such that
	\begin{equation}
	\label{eq:CAR}
	\|C(-A)^{\alpha} R(\lambda,A)\|
	\leq \frac{K}{\sqrt{\re \lambda}}
	\end{equation}
	for all $\lambda \in \mathbb{C}_+$.
	From  \eqref{eq:first_Plancherel}--\eqref{eq:CAR},
	it follows that for all  $0 < \xi  < 1/(2t_0)$ and $x \in X$,
	\begin{align}
	&
	\int^{\infty}_{-\infty}
	\|CR(\xi +i\eta,A)^2x\|^2 d\eta \notag \\
	&\qquad \leq 
	\sup_{\eta \in \mathbb{R}}\|C(-A)^{\alpha} R(\xi +i\eta,A)\|^2
	\int^{\infty}_{-\infty}
	\|R(\xi +i\eta,A)(-A)^{-\alpha} x\|^2 d\eta  \notag \\
	&\qquad \leq 
	\frac{2\pi K^2}{\xi } \left(c^2t_0 + \frac{M_0^2 M_1}{2\xi |\log (2\xi )|^{2\beta} }\right)  \|x\|^2.
	\label{eq:CR2x}
	\end{align}
	Using Plancherel's theorem again, we obtain that for all $\xi  >0$ and $x \in D(A)$,
	\[
	\int^{\infty}_0 \|t e^{-\xi  t} CT(t)x\|^2 dt =
	\frac{1}{2\pi}
	\int^{\infty}_{-\infty}
	\|CR(\xi +i\eta,A)^2x\|^2 d\eta.
	\]
	This and  \eqref{eq:CR2x} show that there exists 
	a constant $M_2 >0$ such that
	\begin{equation}
	\label{eq:ateCT}
	\int^{\infty}_0 \|\xi  t e^{-\xi t} CT(t)x\|^2 dt \leq 
	\frac{M_2}{|\log(2\xi )|^{2\beta}}   \|x\|^2
	\end{equation}
	for all $0 < \xi  < 1/(2t_0)$ and $x \in D(A)$.
	
	Take $\tau_1 < 1 < \tau_2$ such that $\tau_1 e^{-\tau_1} = 1/(2e) = 
	\tau_2 e^{-\tau_2}$. Set 
	\[
	\mu_n \coloneqq
	\left(
	\frac{\tau_1}{\tau_2}
	\right)^{n-1},\quad \tau_n \coloneqq \frac{\tau_1}{\mu_n}.
	\]
	Then $\mu_n \to 0$ and  $\tau_n \to \infty$ as $n \to \infty$.
	Moreover, for all $n \in \mathbb{N}$,
	\begin{equation}
	\label{eq:mue_lower_bound}
	\mu_n t e^{-\mu_n t} > \frac{1}{2e}
	\end{equation}
	whenever $t \in (\tau_n,\tau_{n+1})$.
	Let $m \in \mathbb{N}$ satisfy $\mu_m < 1/(2t_0)$.
	Combining the estimates \eqref{eq:ateCT} and \eqref{eq:mue_lower_bound},
	we have that 
	for all $x \in D(A)$,
	\begin{align*}
	\int^{\infty}_{\tau_m} \|CT(t)x\|^2 dt &=
	\sum_{n=m}^{\infty} \int^{\tau_{n+1}}_{\tau_n} \|CT(t)x\|^2 dt \\
	&\leq 
	(2e)^2\sum_{n=m}^{\infty} \int^{\tau_{n+1}}_{\tau_n} \|\mu_n t e^{-\mu_n t} CT(t)x\|^2 dt  \\
	&\leq 
	(2e)^2 M_2\|x\|^2 \sum_{n=m}^{\infty} \frac{1}{|\log (2\mu_n)|^{2\beta}}.
	\end{align*}
	Since 
	\[
	\frac{1}{|\log (2\mu_n)|}=
	\frac{1}{(\log(\tau_2) - \log(\tau_1) )(n-1) - \log 2 }
	\]
	for all $n \geq m$,
	we have from $\beta > 1/2$ that
	\[
	M_3 \coloneqq 
	(2e)^2 M_2  \sum_{n=m}^{\infty} \frac{1}{|\log (2\mu_n)|^{2\beta}}
	\in (0,\infty),
	\]
	and then
	\begin{equation}
	\label{eq:CT_tau1_bound}
	\int^{\infty}_{\tau_m} \|CT(t)x\|^2 dt \leq M_3 \|x\|^2
	\end{equation}
	for all $x \in D(A)$.
	Combining the estimate \eqref{eq:CT_tau1_bound} 
	with the finite-time admissibility of $C$ in the assumption a), we conclude that 
	$C$ is infinite-time admissible for $A$.
\end{proof}

The assumption c) of Theorem~\ref{thm:Weiss_cond}
leads us to the following strong version of the Weiss condition.
\begin{definition}
	{\em
		Let $X$ and $Y$ be Banach spaces, and
		let $A$ be the generator of a bounded $C_0$-semigroup on $X$.
		An operator $C\in \mathcal{L}(D(A),Y)$ satisfies the 
		{\em strong $2$-Weiss condition
			for $A$} 
		if there exists $\alpha >0$ such that 
		$C(-A)^{\alpha}$ extends to an operator in $\mathcal{L}(D(A),Y)$ 
		and its extension satisfies the $2$-Weiss condition for $A$.
	}
\end{definition}

By Proposition~\ref{prop:interpolation}, if $A$ is the generator of a
polynomially stable $C_0$-semigroup $(T(t))_{\geq 0}$ on a Banach space, 
then for all $\alpha >0$, 
there exists $\beta >0$ such that 
$\|T(t)(-A)^{-\alpha}\| = O(t^{-\beta})$ as $t \to \infty$,
which is a faster decay rate than the one of the assumption b) in Theorem~\ref{thm:Weiss_cond}.
Therefore, we obtain the following result as a direct consequence of
Theorem~\ref{thm:Weiss_cond}.
\begin{corollary}
	\label{coro:Weiss_cond}
	Let $X$ and $Y$ be Hilbert spaces, and
	let $A$ be the generator of 
	a polynomially stable $C_0$-semigroup on $X$. 
	If $C \in \mathcal{L}(D(A),Y)$
	is $L^2$-finite-time admissible for $A$ and satisfies the 
	strong $2$-Weiss condition for $A$,
	then
	$C$ is $L^2$-infinite-time admissible for $A$.
\end{corollary}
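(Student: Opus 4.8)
The plan is to obtain Corollary~\ref{coro:Weiss_cond} as an immediate application of Theorem~\ref{thm:Weiss_cond}; the whole task is to verify that the three hypotheses a)--c) of that theorem hold for a suitable fractional exponent.

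First I would extract the consequences of polynomial stability. Since $(T(t))_{t\geq 0}$ is polynomially stable, \cite[Theorem~1.1]{Batty2008} gives $i\mathbb{R}\cap\sigma(A)=\emptyset$, so in particular $0\in\varrho(A)$ --- which is the standing assumption of Theorem~\ref{thm:Weiss_cond} --- and $(-A)^{-1}=-A^{-1}\in\mathcal{L}(X)$ is sectorial and commutes with each $T(t)$. Moreover, by the definition of polynomial stability together with the remark following it, there is some $\alpha_0>0$ with $\|T(t)A^{-1}\|=O(t^{-1/\alpha_0})$ as $t\to\infty$.

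Next I would fix the exponent. The strong $2$-Weiss condition supplies some $\alpha>0$ for which $C(-A)^{\alpha}$ extends to an operator in $\mathcal{L}(D(A),Y)$ whose extension satisfies the $2$-Weiss condition for $A$; this is exactly hypothesis c). For this very $\alpha$, Proposition~\ref{prop:interpolation} applied with $B\coloneqq(-A)^{-1}$ upgrades $\|T(t)A^{-1}\|=O(t^{-1/\alpha_0})$ to $\|T(t)(-A)^{-\alpha}\|=O(t^{-\alpha/\alpha_0})$ as $t\to\infty$. Because a polynomial rate $t^{-\alpha/\alpha_0}$ decays faster than $(\log t)^{-\beta}$ for every $\beta\geq 0$, hypothesis b) holds with, say, $\beta\coloneqq 1$, which indeed exceeds $1/2$. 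Hypothesis a) is assumed outright.

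Having verified a), b), c) for the same $\alpha$ and some $\beta>1/2$, I would invoke Theorem~\ref{thm:Weiss_cond} to conclude that $C$ is $L^2$-infinite-time admissible for $A$. There is essentially no obstacle here; the only point needing a moment's care is that the exponent $\alpha$ produced by the strong $2$-Weiss condition must be reused when checking b), but this is harmless since the polynomial decay coming from polynomial stability beats every logarithmic rate independently of the value of $\alpha$.
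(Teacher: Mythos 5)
Your proposal is correct and follows essentially the same route as the paper: the paper likewise notes that polynomial stability combined with Proposition~\ref{prop:interpolation} yields $\|T(t)(-A)^{-\alpha}\|=O(t^{-\beta})$ for the $\alpha$ supplied by the strong $2$-Weiss condition, which dominates the logarithmic rate required in hypothesis b) of Theorem~\ref{thm:Weiss_cond}, so the corollary is a direct consequence of that theorem. Your additional remarks on $0\in\varrho(A)$ and on reusing the same $\alpha$ are correct points of care that the paper handles implicitly.
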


The case $X=Y= \ell^2(\mathbb{N})$ is 
simple but practically important as explained in \cite[Remark~2.7]{Hansen1997}.
Consider a diagonal operator $A$ on $\ell^2(\mathbb{N})$.
The resolvent condition
$\|C(-A)^{\alpha}R(\lambda, A)\| \leq K/\sqrt{\re \lambda}$ in
the strong $2$-Weiss condition is transformed into the
operator Carleson measure criterion introduced in 
\cite[Definition~1.1]{Hansen1991}.
To describe the equivalence more precisely,
let $(\lambda_n)_{n\in \mathbb{N}}$  be the eigenvalues
of the 
diagonal operator $A$ on $\ell^2(\mathbb{N})$. 
We assume that $\re \lambda_n < 0$ for all $n \in \mathbb{N}$.
Let $C \in \mathcal{L}(D(A), \ell^2(\mathbb{N}) )$ and put
$c_n \coloneqq Ce_n \in \ell^2(\mathbb{N})
$ 
for $n \in \mathbb{N}$, where $e_n$ 
is the $n$th unit vector in 
$\ell^2(\mathbb{N})$. 
Define $c_n^*\colon
\ell^2(\mathbb{N}) \to \mathbb{C}$ by
$c_n^*x = \langle x,c_n \rangle$ for $x \in \ell^2(\mathbb{N})$.
When the operator $C$ is represented by an infinite matrix,
$c_n$ is the $n$th column of $C$, and $c_nc_n^*$ is
an infinite matrix of rank one.
For $h>0$ and $\omega \in \mathbb{R}$,
the rectangle $Q(h,\omega )$ is given by
\[
Q(h,\omega ) \coloneqq \{
\lambda \in \mathbb{C}:
0\leq \re \lambda \leq h,\,
|\im \lambda - \omega | \leq h
\}.
\]
For a fixed $\alpha > 0$,
\cite[Proposition~5.3]{Hansen1997} shows that 
$C(-A)^{\alpha}$ extends to an operator in $\mathcal{L}(D(A),\ell^2(\mathbb{N}))$
and
there exists
$K>0$ such that
$\|C(-A)^{\alpha}R(\lambda, A)\| \leq K/\sqrt{\re \lambda}$ for 
all $\lambda \in \mathbb{C}_+$ if and only if
there exists $M>0$ such that for  all $h>0$ and $\omega  \in \mathbb{R}$, 
\[
\left\|
\sum_{-\lambda_n \in Q(h,\omega )} |\lambda_n|^{2\alpha} c_nc_n^*
\right\| \leq Mh,
\]
where the 
norm on the left-hand side is the operator norm on $\ell^2(\mathbb{N})$.

We have seen in 
Lemma~\ref{lem:poly_decay_to_WC} 
and Proposition~\ref{prop:WC_to_poly_decay} that 
when $T(t)$ commutes with $C \in \mathcal{L}(D(A),X)$ for all $t \geq 0$,
the estimate
$\|CT(t)\| = O(1/\sqrt{t})$ as $t \to \infty$ is equivalent to 
the $2$-Weiss condition on $C$ in the Hilbert space setting.
The next proposition shows that 
the strong $2$-Weiss condition implies a slightly better decay rate
$\|CT(t)\| = O(1/\sqrt{t^{1+\beta}})$ for some $\beta >0$ if
$(T(t))_{t \geq 0}$ is polynomially stable.

\begin{proposition}
	\label{prop:qWeiss_decay}
	Let $A$ be the generator of
	a polynomially stable $C_0$-semigroup $(T(t))_{t \geq 0}$
	on a Hilbert space $X$.
	Let  $C \in \mathcal{L}(D(A),X)$ be such that  $T(t)$ commutes with $C$ 
	for all $t \geq 0$. If $C$ 
	satisfies the strong $2$-Weiss condition for $A$, then
	the operator $CT(t)$ extends to
	a bounded linear operator (also denoted by $CT(t)$) on $X$ for all $t >0$, 
	and
	there exist $M>0$ and $\beta > 0$ such that 
	\begin{equation}
	\label{eq:CT_decay}
	\|CT(t)\| \leq
	\frac{M}{\sqrt{t^{1+\beta}}}
	\end{equation}
	for all $t >0$.
\end{proposition}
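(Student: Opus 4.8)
The plan is to combine two transference results already at hand. From the $2$-Weiss condition satisfied by $C$ itself we get, via Proposition~\ref{prop:WC_to_poly_decay}, that $CT(t)$ extends to a bounded operator on $X$ for \emph{every} $t>0$ together with the rough rate $\|CT(t)\|=O(t^{-1/2})$; and then a sharpened version of the same Plancherel computation, in which the smoothing factor $(-A)^{-\alpha}$ is inserted, upgrades the rate at infinity. To set up, recall that a polynomially stable semigroup has $i\mathbb{R}\cap\sigma(A)=\emptyset$ by \cite[Theorem~1.1]{Batty2008}, so $0\in\varrho(A)$ and $\|T(t)A^{-1}\|=\|T(t)(-A)^{-1}\|=O(t^{-1/\alpha_0})$ for some $\alpha_0>0$; Proposition~\ref{prop:interpolation} with $B:=(-A)^{-1}$ then gives $\|T(t)(-A)^{-\alpha}\|=O(t^{-\alpha/\alpha_0})$, and by weakening the exponent if necessary we may fix $\beta_0\in(0,1/2)$ with $\|T(t)(-A)^{-\alpha}\|=O(t^{-\beta_0})$, where $\alpha>0$ is the exponent supplied by the strong $2$-Weiss condition. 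Write $D$ for the extension of $C(-A)^{\alpha}$ in $\mathcal{L}(D(A),X)$, so that $\|DR(\lambda,A)\|\leq K(\re\lambda)^{-1/2}$ on $\mathbb{C}_+$; since $(-A)^{-\alpha}$ maps $D(A)$ into $D((-A)^{1+\alpha})$, one has $C=D(-A)^{-\alpha}$ on $D(A)$.

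First I would note that $C$ alone satisfies the $2$-Weiss condition: using $C=D(-A)^{-\alpha}$ on $D(A)$ and the commutativity of $(-A)^{-\alpha}$ with $R(\lambda,A)$ gives $CR(\lambda,A)=DR(\lambda,A)(-A)^{-\alpha}$, whence $\|CR(\lambda,A)\|\leq K\|(-A)^{-\alpha}\|(\re\lambda)^{-1/2}$ for all $\lambda\in\mathbb{C}_+$. Applying Proposition~\ref{prop:WC_to_poly_decay} with $F(\xi):=K\|(-A)^{-\alpha}\|\,\xi^{-1/2}$, we conclude that $CT(t)$ extends to a bounded operator on $X$ for every $t>0$ and that $\|CT(t)\|\leq M_0\,t^{-1/2}$ for all $t>0$ and some $M_0>0$. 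This already yields the asserted boundedness and, since it is valid for all $t>0$, will cover a bounded range of $t$ at the very end.

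The core step is to rerun the argument of Proposition~\ref{prop:WC_to_poly_decay} with the smoothing factor kept in place. For $y\in D(A)$ and $\xi>0$, Plancherel's theorem gives $\int_0^\infty\|t e^{-\xi t}CT(t)y\|^2\,dt=\frac1{2\pi}\int_{-\infty}^{\infty}\|CR(\xi+i\eta,A)^2y\|^2\,d\eta$; because $R(\lambda,A)^2y\in D(A)$ we may write $CR(\lambda,A)^2y=DR(\lambda,A)(-A)^{-\alpha}R(\lambda,A)y$, so that $\|CR(\lambda,A)^2y\|\leq K(\re\lambda)^{-1/2}\|R(\lambda,A)(-A)^{-\alpha}y\|$, and a second use of Plancherel's theorem gives $\int_0^\infty\|t e^{-\xi t}CT(t)y\|^2\,dt\leq \frac{K^2}{\xi}\int_0^\infty e^{-2\xi t}\|T(t)(-A)^{-\alpha}y\|^2\,dt$. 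Estimating this last integral exactly as in Lemma~\ref{lem:integral_estimate} --- splitting at $t=1/\xi$ and using $\|T(t)(-A)^{-\alpha}\|\leq c$ on an initial interval together with $\|T(t)(-A)^{-\alpha}\|=O(t^{-\beta_0})$ with $2\beta_0<1$ --- yields $\int_0^\infty e^{-2\xi t}\|T(t)(-A)^{-\alpha}\|^2\,dt\leq C_1\,\xi^{2\beta_0-1}$ for all small $\xi>0$, hence $\int_0^\infty\|t e^{-\xi t}CT(t)y\|^2\,dt\leq K^2 C_1\,\xi^{2\beta_0-2}\|y\|^2$. From here the argument is verbatim that of Proposition~\ref{prop:WC_to_poly_decay}: for large $\tau$ one sets $\xi=1/\tau$, uses $e^{-2}\|t CT(t)y\|^2\leq\|t e^{-t/\tau}CT(t)y\|^2$ on $[0,\tau]$ to get $\int_0^\tau\|t CT(t)y\|^2\,dt\leq e^2K^2C_1\,\tau^{2-2\beta_0}\|y\|^2$, and then from $CT(\tau)y=\frac{2}{\tau^2}\int_0^\tau t\,T(\tau-t)CT(t)y\,dt$, the Cauchy--Schwarz inequality, and $\int_0^\tau\|T(\tau-t)^*z\|^2\,dt\leq c_0^2\tau\|z\|^2$ with $c_0=\sup_{t\geq0}\|T(t)\|$, one obtains $\|CT(\tau)\|\leq 2eKc_0\sqrt{C_1}\,\tau^{-(1+2\beta_0)/2}$ for all large $\tau$. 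Combining this with the bound $\|CT(t)\|\leq M_0\,t^{-1/2}$ on the remaining bounded range of $t$ and enlarging the constant gives $\|CT(t)\|\leq M\,t^{-(1+\beta)/2}$ for all $t>0$, with $\beta:=2\beta_0>0$.

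I expect the only genuinely non-routine point to be recognizing where the gain over Proposition~\ref{prop:WC_to_poly_decay} comes from: the factor $\int_0^\infty e^{-2\xi t}\|T(t)x\|^2\,dt$, which produces only $\xi^{-1}$, must be replaced by $\int_0^\infty e^{-2\xi t}\|T(t)(-A)^{-\alpha}x\|^2\,dt$, which decays like $\xi^{2\beta_0-1}$ precisely because of polynomial stability. The remaining issues --- arranging $\beta_0<1/2$ by weakening the decay exponent, and patching the large-$t$ estimate with the global $O(t^{-1/2})$ bound so as to cover all $t>0$ --- are straightforward bookkeeping, and everything else mirrors the proofs of Proposition~\ref{prop:WC_to_poly_decay} and Lemma~\ref{lem:integral_estimate}.
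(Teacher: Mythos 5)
Your argument is correct, but it takes a genuinely different route from the paper. The paper's proof is a two-line factorization: it applies Proposition~\ref{prop:WC_to_poly_decay} directly to the extension $D$ of $C(-A)^{\alpha}$ to get $\|DT(t)\|=O(t^{-1/2})$, obtains $\|T(t)(-A)^{-\alpha}\|=O(t^{-\beta/2})$ from polynomial stability via Proposition~\ref{prop:interpolation}, and then multiplies the two rates through the semigroup splitting $CT(t)x=C(-A)^{\alpha}T(t/2)\,T(t/2)(-A)^{-\alpha}x$. You instead rerun the whole Plancherel transference with the smoothing factor $(-A)^{-\alpha}$ kept inside the integral, replacing the crude bound $\int_0^\infty e^{-2\xi t}\|T(t)x\|^2\,dt=O(\xi^{-1})$ by $\int_0^\infty e^{-2\xi t}\|T(t)(-A)^{-\alpha}x\|^2\,dt=O(\xi^{2\beta_0-1})$ --- in effect proving a weighted variant of Proposition~\ref{prop:WC_to_poly_decay}. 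All the individual steps check out: the identity $C=D(-A)^{-\alpha}$ on $D(A)$, the two Plancherel applications, the integral asymptotics for $\beta_0<1/2$, and the patching of the small-$t$ range using the global $O(t^{-1/2})$ bound (which works because $t^{-(1+\beta)/2}$ dominates $t^{-1/2}$ on bounded intervals). The paper's route is shorter and delivers whatever exponent $\beta=2\alpha/\alpha_0$ polynomial stability supplies, whereas your normalization $\beta_0<1/2$ caps the resulting $\beta$ below $1$ (harmless here, since only some $\beta>0$ is claimed, and the case $\beta_0\geq 1/2$ would in fact yield the stronger rate $O(t^{-1})$ by the same computation); your route has the conceptual merit of isolating exactly where the gain over the plain $2$-Weiss transference originates.
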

\begin{proof}
	Since $C$ satisfies the strong $2$-Weiss condition for $A$, 
	there exists $\alpha >0$ such that
	$C(-A)^{\alpha }$ 
	extends to an operator in $\mathcal{L}(D(A),X)$
	and its extension satisfies the $2$-Weiss condition for $A$.
	By assumption, $T(t)$ commutes $C(-A)^{\alpha }$ and hence its extension 
	for all $t \geq 0$. Then it follows from
	Proposition~\ref{prop:WC_to_poly_decay}  that there is $M_1 >0$
	such that
	\begin{equation}
	\label{eq:CAT_bound}
	\|C(-A)^{\alpha } T(t)x \| \leq \frac{M_1}{\sqrt{t}}\|x\|
	\end{equation}
	for all $t >0$ and 
	$x \in D((-A)^{1+\alpha })$.
	Since $(T(t))_{t \geq 0}$ is polynomially stable,
	Proposition~\ref{prop:interpolation} shows
	that there exist $M_2 >0$ and $\beta >0$ such that
	\begin{equation}
	\label{eq:TA_bound}
	\| T(t) (-A)^{-\alpha }\| \leq \frac{M_2}{\sqrt{t^\beta}}
	\end{equation}
	for all $t >0$.
	Combining the inequalities \eqref{eq:CAT_bound} and
	\eqref{eq:TA_bound}, we obtain
	\begin{align*}
	\|CT(t)x\| &=
	\|C(-A)^{\alpha } T(t/2) T(t/2) (-A)^{-\alpha }x\| \\
	&\leq 
	\frac{M_1}{\sqrt{t/2}} \,
	\| T(t/2) (-A)^{-\alpha }x\| \\
	&\leq \frac{\sqrt{2^{1+\beta}}M_1M_2}{\sqrt{t^{1+\beta}}}
	\|x\|
	\end{align*}
	for all $t >0$ and 
	$x \in D(A)$.
	The result follows by the density of $D(A)$ in $X$.
\end{proof}

We also show that 
the decay property $\|CT(t)\| = O(1/\sqrt{t^{1+\beta}})$ for some $\beta >0$ 
leads to the strong $2$-Weiss condition
under some additional assumption on boundedness.

\begin{proposition}
	\label{prop:decay_qWeiss}
	Let $A$ be the generator of
	a bounded $C_0$-semigroup $(T(t))_{t \geq 0}$
	on a Banach space $X$.
	Let  $C \in \mathcal{L}(D(A),X)$ be such that  $T(t)$ commutes with $C$ 
	for all $t \geq 0$.
	If there exist constants $M_1,M_2, \alpha,\beta >0$ such that for all $t >0$,
	\begin{equation}
	\label{eq:st_Weiss_cond_bound1}
	\|C(-A)^\alpha T(t)x\| \leq M_1 \|x\|,\quad x \in D((-A)^{1+\alpha})
	\end{equation}
	and
	\begin{equation}
	\label{eq:st_Weiss_cond_bound2}
	\|CT(t)x\| \leq \frac{M_2}{\sqrt{t^{1+\beta}}} \|x\|,\quad x \in D(A),
	\end{equation}	
	then
	$C$ satisfies the strong $2$-Weiss condition for $A$.
\end{proposition}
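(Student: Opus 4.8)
The plan is to show that the two hypotheses \eqref{eq:st_Weiss_cond_bound1} and \eqref{eq:st_Weiss_cond_bound2} interpolate to the single decay estimate
\[
\|C(-A)^\gamma T(t)\|\le\frac{M}{\sqrt{t}}\qquad(t>0),\qquad\gamma:=\frac{\alpha\beta}{1+\beta}\in(0,\alpha),
\]
and then to read the strong $2$-Weiss condition off from this. The exponent $\gamma$ is forced: it is the interpolation parameter $\gamma=\theta\alpha$ with $\theta=\beta/(1+\beta)$, which is precisely the value for which $(1-\theta)(1+\beta)=1$.

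First I would record the commutation input. Since $T(t)$ commutes with $C$ and $C$ is closed, $C$ commutes with the resolvents $R(\lambda,A)$ for $\lambda\in\mathbb C_+$ — as one sees from the Bochner-integral representation $R(\lambda,A)x=\int_0^\infty e^{-\lambda t}T(t)x\,dt$ in $(D(A),\|\cdot\|_A)$, valid for $x\in D(A)$ — and hence, by the functional-calculus argument for the sectorial operator $-A$, also with the fractional powers $(-A)^\gamma$ and $(-A)^\alpha$ on their natural domains. In particular, for $x$ in the dense subspace $D((-A)^{1+\alpha})$ one has $CT(t)x\in D((-A)^\alpha)$ with $(-A)^\alpha CT(t)x=C(-A)^\alpha T(t)x$ and $(-A)^\gamma CT(t)x=C(-A)^\gamma T(t)x$. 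Applying the moment inequality for the sectorial operator $-A$ to the vector $CT(t)x\in D((-A)^\alpha)$ gives
\[
\|C(-A)^\gamma T(t)x\|=\bigl\|(-A)^\gamma\bigl(CT(t)x\bigr)\bigr\|\le c\,\|CT(t)x\|^{\,1-\gamma/\alpha}\,\|C(-A)^\alpha T(t)x\|^{\,\gamma/\alpha},
\]
and inserting $\|C(-A)^\alpha T(t)x\|\le M_1\|x\|$ from \eqref{eq:st_Weiss_cond_bound1} and $\|CT(t)x\|\le M_2 t^{-(1+\beta)/2}\|x\|$ from \eqref{eq:st_Weiss_cond_bound2}, together with the identity $(1-\gamma/\alpha)(1+\beta)/2=1/2$, yields $\|C(-A)^\gamma T(t)x\|\le M t^{-1/2}\|x\|$. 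By density, $C(-A)^\gamma T(t)$ extends to an operator in $\mathcal L(X)$ with $\|C(-A)^\gamma T(t)\|\le M t^{-1/2}$ for every $t>0$.

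To finish, note that $t\mapsto t^{-1/2}$ is integrable at the origin, so for $x\in D((-A)^{1+\alpha})$ the identity $C(-A)^\gamma R(\lambda,A)x=\int_0^\infty e^{-\lambda t}C(-A)^\gamma T(t)x\,dt$ holds (the operator $C(-A)^\gamma$, being bounded from $D((-A)^{1+\alpha})$ into $X$, passes through the Bochner integral), so that $\|C(-A)^\gamma R(\lambda,A)\|\le\sqrt{\pi}\,M/\sqrt{\re\lambda}$ for all $\lambda\in\mathbb C_+$; equivalently, one may invoke Lemma~\ref{lem:poly_decay_to_WC} with $p=2$. Evaluating this at $\lambda=1$ and using that $R(1,A)$ is an isomorphism of $X$ onto $(D(A),\|\cdot\|_A)$, one gets $\|C(-A)^\gamma y\|\le\sqrt{\pi}\,M\,\|y\|_A$ for $y$ in the $\|\cdot\|_A$-dense subspace $R(1,A)D((-A)^{1+\alpha})$ of $D(A)$; hence $C(-A)^\gamma$ extends to an operator $E\in\mathcal L(D(A),X)$, and the resolvent bound now reads $\|ER(\lambda,A)\|\le\sqrt{\pi}\,M/\sqrt{\re\lambda}$, i.e.\ $E$ satisfies the $2$-Weiss condition for $A$. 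Thus $C$ satisfies the strong $2$-Weiss condition for $A$, with fractional-power exponent $\gamma$.

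The hard part is the decay estimate in the second step: besides the bookkeeping with the various dense domains, one must justify that the unbounded $C$ commutes with the fractional powers of $-A$ — this rests on the closedness of $C$ and its commutation with the resolvents of $A$ — and verify the moment inequality in the precise multiplicative form used. A self-contained variant that bypasses the commutation runs an integral representation of $(-A)^\gamma T(t)x$ directly, pulling a factor $T(t/2)$ through the bounded operators that occur so that \eqref{eq:st_Weiss_cond_bound1} and \eqref{eq:st_Weiss_cond_bound2} can be applied verbatim to $C(-A)^\alpha T(t/2)$ and to $CT(t/2)$, respectively; the two resulting regimes of the integral balance at the rate $t^{-1/2}$ exactly because $\gamma/\alpha=\beta/(1+\beta)$.
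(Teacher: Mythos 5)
Your proposal is correct and follows essentially the same route as the paper's proof: interpolate the two hypotheses via the moment inequality at the exponent $\gamma=\alpha\beta/(1+\beta)$ to get $\|C(-A)^\gamma T(t)\|=O(t^{-1/2})$, then take Laplace transforms to obtain the $2$-Weiss bound and extend $C(-A)^\gamma$ to $\mathcal{L}(D(A),X)$ by density. The only cosmetic differences are in how the commutation of $C$ with fractional powers is justified (the paper passes through $\widetilde C=C(I-A)^{-1}\in\mathcal{L}(X)$) and in the final extension step (the paper invokes that $D((-A)^{1+\alpha})$ is a core for $(-A)^\gamma$ rather than evaluating the resolvent bound at $\lambda=1$); both variants are sound.
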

\begin{proof}
	By assumption, $\widetilde C \coloneqq C(I-A)^{-1} \in \mathcal{L}(X)$ commutes with
	$A$. Let $\delta >0$. It follows from
	\cite[Proposition~3.1.1.f)]{Haase2006}
	that $\widetilde C$ also commutes with $(-A)^{\delta }$.
	For all $x \in D((-A)^{1+\delta }) = D((I-A)^{1+\delta }) $,
	there exists $y \in D((-A)^{\delta }) = D((I-A)^{\delta})$ such that $x = (I-A)^{-1} y$. 
	Then
	\begin{align}
	\label{eq:CA_frac_comm}
	C (-A)^{\delta }x = 
	\widetilde C (-A)^{\delta } y =  (-A)^{\delta }\widetilde C y =
	(-A)^{\delta }C x.
	\end{align}

	Define $\gamma \coloneqq \alpha \beta/ (1+\beta) \in (0,\alpha)$.
	The moment inequality (see, e.g., \cite[Proposition~6.6.4]{Haase2006}) shows that
	there exists a constant $c>0$ such that 
	for all $t \geq 0$ and $x \in D((-A)^{1+\alpha})$.
	\begin{equation}
	\label{eq:st_Weiss_cond_bound3}
	\|(-A)^\gamma CT(t)x \| \leq 
	c \|(-A)^\alpha CT(t)x\|^{\gamma/\alpha} \,\|C T(t)x\|^{1 - \gamma/\alpha}.
	\end{equation}
	Since 
	\[
	\frac{1+\beta}{2}\left(
	1- \frac{\gamma}{\alpha}
	\right) = \frac{1}{2},
	\]
	we have from 
	\eqref{eq:st_Weiss_cond_bound1}--\eqref{eq:st_Weiss_cond_bound3}
	that
	\begin{align}
	\label{eq:A_gamma_estimate}
	\|C(-A)^\gamma T(t)x \| \leq
	\frac{M_3}{
		\sqrt{t}
	} \|x\|
	\end{align}
	for all $t >0$ and $x \in D((-A)^{1+\alpha})$,
	where $M_3 \coloneqq c M_1^{\gamma/\alpha}M_2^{1-\gamma/\alpha}$.

	The estimate \eqref{eq:A_gamma_estimate} yields that
	for all $\lambda \in \mathbb{C}_+$ and $x \in D((-A)^{1+\alpha})$,
	\begin{align}
	\|
	C(-A)^\gamma R(\lambda,A)x
	\| &\leq 
	\int^\infty_0
	e^{- t \re \lambda}
	\|C (-A)^\gamma  T(t) x\| dt \notag \\
	&\leq 
	M_3 \int^{\infty}_0 
	\frac{e^{-t \re \lambda }}{\sqrt{t}} dt
	\|x\| \notag\\
	&=
	\frac{M_3\Gamma(1/2) }{\sqrt{\re \lambda}} \|x\|,
	\label{eq:CAR_bound}
	\end{align}
	where $\Gamma$ is the gamma function.
	By 
	\cite[Proposition~3.1.1.h)]{Haase2006}, 
	$D((-A)^{1+\alpha})$ is a core for $(-A)^\gamma$, and hence
	the estimate \eqref{eq:CAR_bound} holds for all $x \in D((-A)^\gamma)$.
	Since $D((-A)^{\gamma})$ is dense in $X$, it follows that 
	$C(-A)^\gamma R(\lambda,A)$ extends to a bounded linear
	operator on $X$ for a fixed $\lambda \in \mathbb{C}_+$. Therefore, $C(-A)^\gamma$ extends to an
	operator in $\mathcal{L}(D(A),X)$.
	We also see from the estimate \eqref{eq:CAR_bound} that 
	the extension of
	$C(-A)^\gamma$ satisfies the $2$-Weiss condition for $A$.
	Thus, $C$ satisfies the strong $2$-Weiss condition for $A$.
\end{proof}

\end{document}